\newcommand{\0}[3]{\pi^{(#1)}_{#2,#3}}
\newcommand{\Z}{\mathbb{Z}}
\numberwithin{equation}{section}
\newcommand{\AP}{\mathrm{(AP)}}
\newcommand{\C}{\mathrm{Comp}}  %
\newcommand{\Comp}{\C}
\newcommand{\join}{\lor}
\newcommand{\meet}{\land}
\renewcommand{\epsilon}{\varepsilon}
\newcommand{\eps}{\epsilon}
\newcommand{\ab}[1]{{\mathbf{#1}}}
\newcommand{\ob}[1]{{\mathbb{#1}}}
\newcommand{\A}{\ab{A}}
\newcommand{\vb}[1]{\bar{#1}}
\newcommand{\ABar}{\bar{\ab{A}}}
\newcommand{\AI}{\ABar|_{I}}
\newcommand{\N}{\mathbb{N}}
\newcommand{\setsuchthat}{\mid}
\newcommand{\Pol}{\mathrm{Pol}}
\newcommand{\Con}{\mathrm{Con}}
\newcommand{\ord}{\mathrm{ord}}
\newcommand{\Clo}{\mathrm{Clo}}
\newcommand{\id}{\mathrm{id}}
\newcommand{\IntI}{\mathbb{I}}
\newcommand{\algop}[2]{( {#1}, {#2} )}
\theoremstyle{plain} \newtheorem{thr}{Theorem}[section]
\theoremstyle{plain} \newtheorem{thm}[thr]{Theorem}
\theoremstyle{plain} \newtheorem{lem}[thr]{Lemma}
\theoremstyle{plain} \newtheorem{pr}[thr]{Proposition}
\theoremstyle{plain} \newtheorem{pro}[thr]{Proposition}
\theoremstyle{plain} 
\theoremstyle{plain} \newtheorem{que}[thr]{Question}
\theoremstyle{definition} \newtheorem{df}[thr]{Definition}
\theoremstyle{definition} \newtheorem{de}[thr]{Definition}
\title[Finite generation of congruence preserving functions]{Finite generation of congruence preserving functions}
\author{Erhard Aichinger}
\address{Institut f\"ur Algebra \\
Johannes Kepler Universit\"at Linz \\
4040 Linz, Austria}
\email{erhard@algebra.uni-linz.ac.at}
\author{Marijana Lazi\'c}
\address{Department of Mathematics and Informatics \\
Faculty of Sciences \\
University of Novi Sad \\
Trg Dositeja Obradovi\'{c}a 4 \\
21000 Novi Sad, Serbia \& Institut f\"ur Algebra \\
Johannes Kepler Universit\"at Linz \\
4040 Linz, Austria}
\email{marijana.lazic@dmi.uns.ac.rs}
\author{Neboj\v{s}a Mudrinski}
\address{Department of Mathematics and Informatics \\
Faculty of Sciences \\
University of Novi Sad \\
Trg Dositeja Obradovi\'{c}a 4 \\
21000 Novi Sad, Serbia \& Institut f\"ur Algebra \\
Johannes Kepler Universit\"at Linz \\
4040 Linz, Austria}
\email{nmudrinski@dmi.uns.ac.rs}
\thanks{Supported by Austrian research fund FWF P24077 and Research Grant 174018 of the Ministry of Science and Education
of the Republic of Serbia.}
\keywords{Congruence preserving function, expanded group, clone, finite generation}
\subjclass[2010]{08A40, 08A30}
\begin{document}

\bibliographystyle{amsalpha}

\begin{abstract}
We investigate when the clone of congruence preserving functions is finitely generated. We obtain a full description for
all finite $p$-groups, and for all finite algebras with Mal'cev term and simple congruence lattice. The characterization for $p$-groups
allows a generalization to a large class of expansions of groups.
\end{abstract}
\date{\today}
\maketitle

\section{Motivation} \label{sec:mot}
With each algebraic structure, one can associate several sets of finitary operations that contain  a lot of structural
information. One such set is the set of \emph{congruence preserving operations}
of the algebra.
Among these congruence preserving operations, we find all basic operations, all constant operations, 
all term operations, and all polynomial operations of the algebra. 
However, many algebras admit congruence preserving operations that
are not polynomial, and in contrast to polynomial operations,
the set of congruence preserving operations has no obvious set of
generators from which all congruence preserving operations can be composed. 
For example, 
the set of congruence preserving operations of the group
$\Z_4 \times \Z_2$ has
no finite set of generators at all \cite{Ai:2ACA}. In the present paper, we will investigate for
which finite algebras all congruence preserving operations can be generated from
a finite subset of such operations; in this case, we say that the clone of congruence
preserving functions is \emph{finitely generated}. So the main question
that we consider is:
\begin{que} \label{que:main}
   We are given a finite algebra $\ab{A}$. Is the clone of congruence
   preserving functions on $\ab{A}$ finitely generated?
\end{que}
We will now give a brief survey of some known results on this
question: if the algebra is simple, then every finitary function is
congruence preserving and thus the clone of congruence preserving
functions is finitely generated (by its binary members). A slightly
weaker result holds for algebras whose congruences permute with
respect to the relation product and a distributive congruence
lattice; such algebras are called \emph{arithmetical}. It follows
from \cite[Corollary~5.3 (1)]{Ai:OHAH} that the congruence
preserving functions of a finite arithmetical algebra are generated
by their ternary members. Other examples for finite generation are
provided by \emph{affine complete} algebras (cf. \cite{KP:PCIA}): in
a finite affine complete algebra of finite type, every congruence
preserving function is a polynomial function and the clone of
polynomial functions is finitely generated for such algebras.
The results proved in this paper will provide a complete answer to Question~\ref{que:main}
for finite nilpotent groups. For the special case of abelian groups,
we have:
\begin{thm} \label{thm:abelian}
   The clone of congruence preserving functions of a finite abelian group
   is finitely generated if and only if all its Sylow subgroups
   are either cyclic or affine complete.
\end{thm}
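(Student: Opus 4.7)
The plan is to reduce to Sylow subgroups and then split into the two sufficient cases plus a necessity argument. Writing $G = \prod_p G_p$ as the direct product of its Sylow subgroups, the coprimality of their orders makes every congruence of $G$ split as a product $\prod_p N_p$ of congruences of the factors. I would use this to establish a clone isomorphism $\Comp(G) \cong \prod_p \Comp(G_p)$: a function $f \colon G^n \to G$ is congruence preserving iff each coordinate $f_p$ factors through the $G_p$-projections of its inputs and the resulting function $\tilde f_p \colon G_p^n \to G_p$ lies in $\Comp(G_p)$ (to see the factoring, observe that preserving the congruence with kernel $\prod_{q \neq p} G_q$ forces $f_p$ to depend only on the $G_p$-coordinates). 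Since only finitely many primes divide $|G|$, $\Comp(G)$ is finitely generated iff every $\Comp(G_p)$ is, and the theorem reduces to the $p$-group case.

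For the sufficient directions on a Sylow subgroup $G_p$: if $G_p$ is cyclic, then $G_p$ is arithmetical, since $x-y+z$ is a Mal'cev term and the chain of subgroups is a distributive congruence lattice; by Corollary~5.3(1) of \cite{Ai:OHAH}, the congruence preserving functions of a finite arithmetical algebra are generated by their ternary members, of which there are only finitely many. If $G_p$ is affine complete, then $\Comp(G_p)$ coincides with the polynomial clone of $G_p$, which is finitely generated by the two basic group operations together with the finitely many constants (cf.~\cite{KP:PCIA}).

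For necessity, assume $G_p$ is neither cyclic nor affine complete. The classical description of affine complete finite abelian groups implies that $G_p$ then possesses a direct factor of the form $\Z_{p^k} \times \Z_{p^\ell}$ with $k < \ell$; the simplest instance is $\Z_4 \times \Z_2$, which is the motivating example of \cite{Ai:2ACA}. I would then invoke the main $p$-group theorem of the present paper to conclude that the presence of such a mixed-exponent direct factor obstructs finite generation of $\Comp(G_p)$. The hard part is exactly this necessity step: one has to produce, for each $n$, an essentially $n$-ary congruence preserving function on $\Z_{p^k} \times \Z_{p^\ell}$ that no bounded-arity subset of $\Comp$ can compose to, and then transfer this obstruction across a direct-factor embedding to any abelian $p$-group containing such a factor. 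The abelian theorem then falls out as a corollary of the broader $p$-group classification carried out in the paper.
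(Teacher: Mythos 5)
Your reduction to Sylow subgroups and your treatment of the sufficient directions (cyclic is arithmetical, affine complete gives $\Comp = \Pol$) match the paper's argument, which runs through Proposition~\ref{pro:dp2} and the same two cases.

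However, your necessity argument has a genuine gap. You propose to show $\Comp(\Z_{p^k} \times \Z_{p^\ell})$ is not finitely generated when $k < \ell$, and then ``transfer this obstruction across a direct-factor embedding to any abelian $p$-group containing such a factor.'' This transfer fails. The paper itself points out that $\Comp(\Z_4 \times \Z_4 \times \Z_2 \times \Z_2)$ \emph{is} finitely generated even though this group has $\Z_4 \times \Z_2$ as a direct factor; in fact it is affine complete by N\"obauer's criterion. Finite generation of $\Comp$ is not inherited from non-coprime direct factors in either direction (the discussion after Proposition~\ref{pro:dp2} stresses exactly this). What actually governs the answer is the relationship between the two \emph{largest} invariants in the canonical decomposition $G_p \cong \Z_{p^{\alpha_1}} \times \cdots \times \Z_{p^{\alpha_n}}$ with $\alpha_1 \ge \cdots \ge \alpha_n$: the paper's Theorem~\ref{thm:abgroups} says $\Comp(G_p)$ is finitely generated iff $n = 1$ or $\alpha_1 = \alpha_2$, and its proof relies on Lemma~\ref{lem:splitabelian}, which shows the subgroup lattice of the \emph{whole} group $G_p$ splits exactly when $\alpha_1 > \alpha_2$. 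The obstruction (via Theorem~\ref{thm:malcev} and the absorbing functions $c_n$) must therefore be run on $G_p$ itself with respect to a splitting pair of its full subgroup lattice, not on a two-factor subproduct, and it cannot be transported to a larger group where the lattice no longer splits.
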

The proof of this result will be given in Section~\ref{sec:abelian}.
Hence, using W.\ N\"obauer's description of finite affine complete groups \cite[Satz 5]{No:UDAV},
we gather that $\Z_{27} \times \Z_{25} \times \Z_{25} \times \Z_5$ has a finitely
generated clone of congruence preserving functions, whereas the clone
of congruence preserving functions of $\Z_3 \times \Z_{25}\times \Z_{5}$ is not finitely generated.
The result by Lausch and N\"obauer characterizing affine complete abelian groups can be
stated differently in terms of the subgroup lattice of the group; namely, a finite abelian
$p$-group is affine complete if and only if its subgroup lattice cannot be written as the union of
two proper subintervals. It is therefore not surprising that the shape
of the congruence lattice also plays an important role in describing arbitrary algebras with a finitely
generated clone of congruence preserving functions. In fact, for a finite nilpotent group $\ab{G}$,
we obtain the following description of when the clone of congruence preserving functions
is finitely generated. In this introductory part, we just state the easiest case.
\begin{thm} \label{thm:pgroups1}
   Let $p$ be a prime, and let $\ab{G}$ be a finite $p$-group. We assume that $\{1\}$ and $G$ are the
   only normal subgroups that are comparable (w.r.t. $\subseteq$) to all other normal subgroups of $\ab{G}$.
   Then the following are equivalent:
   \begin{enumerate}
       \item The clone of congruence preserving functions of $\ab{G}$ is not finitely generated.
       \item \label{it:tp2} $\ab{G}$ is not cyclic, and there  exist normal subgroups $E, D$ of $\ab{G}$ such that $E \neq \{1\}$, $D \neq G$, and
every normal subgroup $I$ satisfies $I \ge E$ or $I \le D$.
   \end{enumerate}
\end{thm}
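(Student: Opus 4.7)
The plan is to prove the equivalence by establishing each direction separately, paralleling the abelian case of Theorem~\ref{thm:abelian}: a two-interval split of the normal subgroup lattice should exactly witness the failure of finite generation, and under the extra hypothesis that no non-trivial normal subgroup is comparable to all others, no further refinement is needed.

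For $(2) \Rightarrow (1)$, I would fix normal subgroups $E, D$ as in the statement, pick a central element $a \in E \cap Z(\ab{G})$ with $a \neq 1$ (which exists by nilpotence, since a non-trivial normal subgroup of a $p$-group meets the center non-trivially), and choose $b \in G \setminus D$. For each $n \ge 1$, I would construct a function $f_n \colon G^n \to G$ whose image lies in $E$ and which equals $a$ on a carefully chosen ``generic'' tuple built from $b$ while taking the value $1$ whenever some entry lies in $D$ in a prescribed pattern. Such $f_n$ is congruence preserving because every normal subgroup $I$ of $\ab{G}$ either satisfies $I \ge E$, in which case the image of $f_n$ sits inside a single $I$-class, or $I \le D$, in which case $I$-equivalent tuples preserve the ``in-$D$ vs.\ out-of-$D$'' pattern driving the values of $f_n$. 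A pigeonhole argument on the essential arity of the $f_n$, in the spirit of~\cite{Ai:2ACA}, would then show that no finite subset of congruence preserving functions can generate all of them.

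For $(1) \Rightarrow (2)$, I would argue by contrapositive. If $\ab{G}$ is cyclic, its normal subgroup lattice is a chain, and the blanket hypothesis that only $\{1\}$ and $G$ are comparable to every normal subgroup collapses this chain to length one, forcing $\ab{G} \cong \Z_p$; this algebra is simple, so every finitary operation is congruence preserving and the clone is generated by its binary members. If $\ab{G}$ is not cyclic but no split $E, D$ exists, then for every non-trivial $E$ and every proper $D$ there is a normal subgroup $I$ with $I \not\ge E$ and $I \not\le D$. The goal in this case is to show that every congruence preserving function on $\ab{G}$ can be assembled from polynomial functions together with a fixed finite set of auxiliary operations, using higher commutator theory on the nilpotent algebra $\ab{G}$ and a uniform interpolation argument over the non-split lattice.

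The main obstacle lies in the finite generation direction when $\ab{G}$ is not cyclic. The analogue for abelian groups rests on N\"obauer's classification of affine complete $p$-groups, which is not directly available beyond the abelian setting; instead, one must exploit the fact that the absence of any two-interval decomposition of the normal subgroup lattice forces each congruence preserving function to be locally determined by its restrictions to quotients, and then bound the arity needed to patch these restrictions together. Controlling this local-to-global passage with uniformly bounded arity, while handling commutator-valued contributions that cannot arise as polynomial terms, is where the substantive work of the proof is likely to concentrate.
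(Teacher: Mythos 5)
Your proposal has the right overall shape: the paper also constructs, from a splitting pair, congruence preserving functions of unbounded essential arity (the functions $c_n$ in the proof of Theorem~\ref{thm:malcev}, which are cosmetic variants of your $f_n$), and it also treats the non-split case by showing the clone is generated by its bounded-arity members. But both decisive steps are left as placeholders, and neither of the informal ideas you supply actually closes them.

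The first gap is the ``pigeonhole argument on essential arity.'' Having congruence preserving functions of unbounded essential arity does not, by itself, obstruct finite generation: the clone of \emph{all} finitary operations on a finite set contains such functions, yet is generated by its binary members. What the paper actually does is substantially more structural. Under your blanket hypothesis, the normal subgroup lattice is simple (Lemma~\ref{lema1}, using $\AP$, which holds automatically for $p$-groups). Now suppose $\mathrm{Comp}(\ab{G})$ were finitely generated by a set $G'$; form the finite-type Mal'cev algebra $\ab{A}' = (G, G')$. Lemmas~\ref{nilpotent} and~\ref{ppo} show $\ab{A}'$ is nilpotent of prime power order, hence \emph{supernilpotent} by Kearnes' theorem (Proposition~\ref{Lema7.6.AM:SAOH}) --- and it is precisely this step that requires finite type, i.e.\ the finite generation being assumed for contradiction. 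In a $k$-supernilpotent Mal'cev algebra, absorbing polynomials have essential arity at most $k$ (Proposition~\ref{Cor6.17.AM:SAOH}); but the $c_n$ (your $f_n$) are absorbing polynomials of $\ab{A}'$ with essential arity $n$. That is the contradiction. A bare pigeonhole count does not reach it.

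The second gap is the non-split case. You flag it yourself as ``where the substantive work is likely to concentrate,'' and the plan you gesture at --- ``commutator-valued contributions that cannot arise as polynomial terms'' --- is in fact a non-issue: the entire clone $\mathrm{Comp}(\ab{G})$ \emph{equals} $\mathrm{Pol}(\bar{\ab{G}})$ where $\bar{\ab{G}} := (G, \mathrm{Comp}(\ab{G}))$, so everything is a polynomial of the expanded algebra $\bar{\ab{G}}$. The missing ingredients are (i) a non-split congruence lattice forces $\bar{\ab{G}}$ to be supernilpotent (Proposition~\ref{Lema3.3.AM:SOCO}), and (ii) in a $k$-supernilpotent Mal'cev algebra the polynomial clone is generated by its $k$-ary members together with a Mal'cev term (Proposition~\ref{pr6.18.AM:SAOH}). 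With those two results in hand, finite generation is immediate (Lemma~\ref{Malcev21}). Both ingredients come from the higher-commutator machinery of \cite{AM:SAOH, AM:SOCO} rather than from any uniform interpolation argument.

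Finally, a small point: in deducing the cyclic case you are right that your blanket hypothesis forces $\ab{G} \cong \Z_p$; this is exactly how the paper recovers the case $|\mathbb{L}| \le 2$ in Theorem~\ref{thm:malcev}, after which the paper observes that every operation is a composition of binary ones.
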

From this criterion, we obtain for example that the congruence
preserving functions of $\Z_9 \times \Z_3$ cannot be finitely
generated (take $E:= 3 \Z_9 \times\{0\}$, $D := 3 \Z_9 \times
\Z_3$), whereas the clone of congruence preserving functions of
$\Z_9 \times \Z_9$ is finitely generated (which is clear because
this group is known to be affine complete); another example to which
Theorem~\ref{thm:abelian} can be applied is the $64$-element group
with number $64/73$ in the catalogue of small groups in GAP; it is a
semidirect product of $\Z_2 \times \Z_2 \times D_8$ with $\Z_2$.
Using GAP \cite{GG:GAP4}, one can find that it does not have any
normal subgroups $D,E$ that satisfy the condition given in
item~\eqref{it:tp2}. Hence this group has a finitely generated clone
of congruence preserving functions. We notice that the affine
completeness status of this group has not been determined in the
literature yet. Starting from this case, we obtain a description for
all finite $p$-groups (Theorem~\ref{thm:pgroups}), and hence for all
finite nilpotent groups (Theorem~\ref{thm:nilpotent}).
One crucial property of a $p$-group $\ab{G}$ that we use is the
following: for two minimal normal subgroups $B$ and $C$ of $G$, $BC$
is isomorphic to $\Z_p \times \Z_p$ and central in $\ab{G}$, and
hence there exists a normal subgroup $D$ of $\ab{G}$ with $\{e\} < D
< B  C$ with $D \neq B$, $D \neq C$. Such a property can be
formulated for arbitrary expansions of groups, leading to 
 a generalization of Theorem~\ref{thm:pgroups} in
Theorem~\ref{teorema}.
A part of these results can be generalized beyond expansions on groups:  we prove
that a finite algebra with a Mal'cev term and simple congruence lattice has its clone
of congruence preserving functions finitely generated if and only if either the algebra is
simple or its congruence lattice is not the union of two proper subintervals
(Theorem~\ref{thm:malcev}).

\section{Results}\label{sec:results}
Following \cite[Definition 1.3]{BS:ACIU}, we consider an algebraic
structure $\ab{A}$ as a pair $(A, F)$, where $A$ is a nonvoid set,
and $F$ is a family of finitary operations on $A$. For universal
algebraic concepts used in this paper, we refer to \cite{BS:ACIU};
some of the concepts that are most relevant to this paper will be
introduced below. We use $\N$ for the set of nonzero natural
numbers, and $\N_0$ for $\N \cup \{0\}$.

\begin{de}
   Let $\ab{A}$ be an algebra, let $k \in \N_0$, and let $f: A^k \to A$. We say that
   $f$ \emph{preserves} a binary relation $\alpha$ on $A$ if for all
   $a_1,\ldots, a_k, b_1,\ldots, b_k \in A$ with $(a_1,b_1) \in \alpha$, \ldots,
   $(a_k, b_k) \in \alpha$, we have
   $(f (a_1,\ldots, a_k), f(b_1,\ldots, b_k)) \in \alpha$, and we abbreviate this
   fact by $f\rhd\alpha$. A \emph{congruence relation} of the algebra $\ab{A} =
\algop{A}{F}$ is an equivalence relation $\alpha$ on $A$ preserved
by all $f\in F$. 
The set of all congruences of
$\ab{A}$ is denoted by $\Con (\ab{A})$. The function $f$ is
\emph{congruence preserving} if $f\rhd\alpha$ for all $\alpha\in\Con
(\ab{A})$.
   The set of all $k$-ary congruence preserving (or \emph{compatible}, for short) functions is denoted
   by $\C_k (\ab{A})$, and $\C (\ab{A}) := \bigcup_{k \in \N} \C_k (\ab{A})$.
\end{de}

The set $\C (\ab{A})$ is a clone (cf. \cite[p.18]{PK:FUR},
\cite[Definition~4.1]{Be:UA}) on $A$, and we will investigate
whether it is finitely generated. Here we say that  $\C (\ab{A})$ is
finitely generated if there is a finite subset $F$ of $\C (\ab{A})$
such that every function in $\C (\ab{A})$ can be written as a
composition of projections and functions in $F$; a precise
definition of finitely generated clones is given in
\cite[p.50]{PK:FUR}. On a group $\ab{G}$, a function $f:G^k \to G$ is
compatible if and only if $f (g_1, \ldots, g_k)^{-1} \cdot
f(h_1,\ldots, h_k)$ lies in the normal subgroup generated by
$\{g_1^{-1} \cdot h_1, \ldots, g_k^{-1} \cdot h_k \}$ for all $(g_1,
\ldots, g_k),(h_1,\ldots, h_k)\in G^k$.

Our description of algebras with finitely generated clone of
compatible functions uses the shape of the congruence lattice of the
algebra. We will therefore need some notions for lattices (cf.
\cite{Gr:GLTS}). For a lattice $\ob{L}$ and $a,b \in \ob{L}$ with
$a\leq b$, we define the  \emph{interval} $\IntI [a,b]$ by $\IntI
[a,b]:= \{c \in \ob{L}\mid a\leq c \leq b\}$.
We denote the four element Boolean lattice by $\ob{M}_2$, and we 
say that a lattice $\ob{L}$ is \emph{$\ob{M}_2$-free} if it has no
interval that is isomorphic to $\ob{M}_2$. 
 We say that an element
$\beta$ of a bounded lattice $\ob{\ob{L}}$ \emph{cuts} the lattice
if  for all $\alpha \in \ob{L}$, we have $\alpha \le \beta$ or
$\alpha \ge \beta$; this is equivalent to saying that $\ob{L}$ is
the union of the intervals $\IntI [{\mathbf 0},\beta]$ and $\IntI
[\beta, {\mathbf 1}]$. Let $C$ be the set of cutting elements of
$\ob{L}$. Then $C$ is obviously a linearly ordered subset of
$\ob{L}$ that contains ${\mathbf 0}$ and ${\mathbf 1}$. For a
lattice $\ob{L}$ of finite height, let ${\mathbf 0} = \gamma_0 <
\cdots < \gamma_k = {\mathbf 1}$ be the sequence of all cutting
elements of $\ob{L}$. Then it is easy to see that $\ob{L} = \IntI
[\gamma_0, \gamma_1] \cup \IntI [\gamma_1, \gamma_2] \cup \cdots
\cup  \IntI [\gamma_{k-1}, \gamma_k]$. If $\beta$ cuts the lattice,
we say that $\ob{\ob{L}}$ is a \emph{coalesced ordered sum} of the
intervals $\IntI [{\mathbf 0}, \beta]$ and $\IntI [\beta, {\mathbf
1}]$.

A special role will be played by those lattices that can be written
as a union of two proper subintervals; we say that such lattices
\emph{split}. For a lattice $\ob{L}$, we say that a pair of elements
$(\delta, \eps) \in \ob{L}^2$ \emph{splits} $\ob{L}$ if $\delta <
{\mathbf 1}$, $\eps > {\mathbf 0}$, and for all $\alpha \in \ob{L}$,
we have $\alpha \le \delta$ or $\alpha \ge \eps$. Note that if
$(\delta, \eps)$ splits the lattice $\ob{L}$, then $\mathbb{L} =
\IntI [{\mathbf 0},\delta] \cup \IntI [\eps , {\mathbf 1}]$. The
lattice $\ob{L}$ \emph{splits} if it has a splitting pair. This
property has already been used in \cite{QW:SROC, AM:SOCO}. For
example, the lattices $\ob{M}_2$ and the congruence lattice of
$\mathbb{Z}_4\times \mathbb{Z}_2$ both split,  and $\ob{M}_3$ does
not split. \vspace{5mm}

\begin{center}
\setlength{\unitlength}{0.06mm}
\begin{picture}(300,300)(0,-65)
\put(100,300){\circle{30}}
\put(0,200){\circle*{30}}
\put(300,100){\circle*{30}} 
\put(200,0){\circle{30}}
\put(89,289){\line(-1,-1){78}}
\put(289,89){\line(-1,-1){78}}
\qbezier(0,185)(50,50)(185,0)
\qbezier(15,200)(150,150)(200,15)
\qbezier(100,285)(150,150)(285,100)
\qbezier(115,300)(250,250)(300,115)
\put(-20,190){\makebox(0,0)[br]{$\delta$}}
\put(325,116){\makebox(0,0)[tl]{$\epsilon$}} 
\put(-150,-75){($\delta,\epsilon$) is a splitting pair}
\end{picture}
\hspace{3cm}
\begin{picture}(330,400)(-15,-65)
\put(100,0){\circle{30}}
\put(0,100){\circle{30}}
\put(200,100){\circle*{30}}
\put(100,200){\circle*{30}}

\put(200,300){\circle{30}}
\put(300,200){\circle{30}}
\put(211,111){\line(1,1){78}}
\put(111,211){\line(1,1){78}}
\put(289,211){\line(-1,1){78}}

\put(111,11){\line(1,1){78}}
\put(11,111){\line(1,1){78}}
\put(89,11){\line(-1,1){78}}
\put(189,111){\line(-1,1){78}}
\put(90,216){\makebox(0,0)[br]{$\delta$}}
\put(215,85){\makebox(0,0)[tl]{$\epsilon$}}
\put(-150,-75){$(\delta,\varepsilon)$ is a splitting pair}
\end{picture}
\end{center}
\vspace{7mm}
\begin{center}
\setlength{\unitlength}{0.06mm}
\begin{picture}(330,400)(-15,-65)
\put(100,0){\circle{30}}
\put(0,100){\circle{30}}
 \put(100,100){\circle{30}}
\put(200,100){\circle{30}}
\put(100,200){\circle{30}}

\put(111,11){\line(1,1){78}}
\put(11,111){\line(1,1){78}}

\put(89,11){\line(-1,1){78}}
\put(189,111){\line(-1,1){78}}
\put(100,16){\line(0,1){68}}
\put(100,116){\line(0,1){68}}
\put(-150,-75){$\ob{M}_3$ does not split}
\end{picture}
\hspace{30mm}
\begin{picture}(330,400)(-15,-85)
\put(100,0){\circle*{30}}
\put(0,100){\circle{30}}
 \put(100,100){\circle{30}}
\put(200,100){\circle{30}}
\put(100,200){\circle*{30}}
\put(0,300){\circle{30}}
\put(100,300){\circle{30}}
\put(200,300){\circle{30}}
\put(100,400){\circle*{30}}
\put(89,211){\line(-1,1){78}}
\put(111,11){\line(1,1){78}}
\put(11,111){\line(1,1){78}}
\put(111,211){\line(1,1){78}}
\put(11,311){\line(1,1){78}}
\put(89,11){\line(-1,1){78}}
\put(189,111){\line(-1,1){78}}

\put(189,311){\line(-1,1){78}}
 \put(100,16){\line(0,1){68}}
 \put(100,116){\line(0,1){68}}
\put(100,216){\line(0,1){68}} \put(100,316){\line(0,1){68}}
\put(125,15){\makebox(0,0)[tl]{$\mathbf 0$}}
 \put(125,215){\makebox(0,0)[tl]{$\alpha$}}
 \put(125,425){\makebox(0,0)[tl]{$\mathbf 1$}}
 \put(-200,-100){${\mathbf 0},\alpha,{\mathbf 1}$ each cut the lattice}
\end{picture}
\end{center}
\vspace{2mm}
\begin{thm} \label{thm:pgroups}
    Let $\ab{G}$ be a finite $p$-group, let $\ob{L}$ be the lattice of normal subgroups
    of $\ab{G}$, and let $\{e\} = N_0 < \cdots < N_n = G$ be the sequence of those normal subgroups
    that cut the lattice $\ob{L}$. Then the following are equivalent:
    \begin{enumerate}
     \item The clone of congruence preserving functions of $\ab{G}$ is finitely generated.
     \item For each $i \in \{0, \ldots, n-1\}$, the interval $\IntI [N_i, N_{i+1}]$ of the lattice
           of normal subgroups of $\ab{G}$  either
           contains exactly two elements, or $\IntI [N_i, N_{i+1}]$ does not split.
    \end{enumerate}
\end{thm}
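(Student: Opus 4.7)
The plan is to reduce Theorem~\ref{thm:pgroups} to Theorem~\ref{thm:pgroups1} via the coalesced-ordered-sum decomposition $\ob{L} = \IntI[N_0, N_1] \cup \cdots \cup \IntI[N_{n-1}, N_n]$. A key preliminary observation is that each interval $\IntI[N_i, N_{i+1}]$ has no cutting elements besides its endpoints: any internal cutting element of $\IntI[N_i, N_{i+1}]$ would, since $N_i$ and $N_{i+1}$ are already cutting in $\ob{L}$, cut the whole lattice (any normal subgroup of $\ab{G}$ compares to both $N_i$ and $N_{i+1}$, hence falls into one of the three zones that the internal cut handles), contradicting the choice of the sequence $N_0, \ldots, N_n$. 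So each interval is in the situation of Theorem~\ref{thm:pgroups1}, and condition (2) states precisely that each piece lies on the finitely-generated side of that dichotomy.

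For $(1) \Rightarrow (2)$ I argue contrapositively: assume some $\IntI[N_i, N_{i+1}]$ has at least three elements and admits a splitting pair $(D, E)$ with $N_i \le D < N_{i+1}$ and $N_i < E \le N_{i+1}$. The plan is to pass to the quotient $\ab{H} := \ab{G}/N_i$, verify that $(\bar D, \bar E)$ splits the whole lattice $\ob{L}(\ab{H})$ (any normal subgroup of $\ab{H}$ compares to the cutting element $\bar N_{i+1}$, and is thus either inside $\IntI[\{e\}, \bar N_{i+1}]$ where the sub-split applies, or contains $\bar N_{i+1} \supseteq \bar E$), and after a further quotient that collapses cutting elements above $\bar N_{i+1}$, apply Theorem~\ref{thm:pgroups1} to conclude non-finite-generation of the resulting quotient's clone. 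This then transfers upward to $\ab{G}$ via the canonical clone homomorphism $\C(\ab{G}) \to \C(\ab{G}/N)$, whose surjectivity is the delicate point for non-abelian $p$-groups; alternatively, I would construct the non-generating sequence of compatible functions $(f_k)_{k \in \N}$ directly in $\ab{G}$ by iterated commutators against $a \in N_{i+1} \setminus D$ and a nontrivial $b \in E$, arranged so that $f_k$ outputs values in $E$ on tuples disagreeing modulo $D$ and nothing otherwise (imitating the $\Z_4 \times \Z_2$ obstruction of \cite{Ai:2ACA}).

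For $(2) \Rightarrow (1)$ I induct on $n$. The base $n = 1$ is Theorem~\ref{thm:pgroups1} itself (a two-element interval making $\ab{G}$ simple, hence trivially finite generation). For $n \ge 2$ the quotient $\ab{G}/N_1$ has cutting-element sequence of length $n - 1$ and inherits condition (2), so by induction $\C(\ab{G}/N_1)$ is generated by a finite set $F_1$. I would lift $F_1$ to compatible functions on $\ab{G}$ and adjoin a finite set $F_2$ handling the $N_1$-valued correction, so that every $f \in \C(\ab{G})$ decomposes as $f(\mathbf{x}) = g(\mathbf{x}) \cdot h(\mathbf{x})$ with $g$ in the clone generated by lifts of $F_1$ and $h$ valued in $N_1$. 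The $N_1$-valued compatible functions on $\ab{G}$ form a clone on $N_1$ expanded by the conjugation action of $\ab{G}/N_1$, whose congruence-lattice role is played by $\IntI[\{e\}, N_1]$; by condition (2) this interval is either trivial or non-splitting, so an expanded-group analogue of Theorem~\ref{thm:pgroups1} (as foreshadowed by the paper's Theorem~\ref{teorema}) supplies finite generation of $F_2$.

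The main obstacle I anticipate is the reassembly step in $(2) \Rightarrow (1)$: producing the decomposition $f = g \cdot h$ so that $h$ is indeed captured as a clone on an auxiliary expanded-group structure satisfying a non-splitting hypothesis, and then invoking the expanded-group version of Theorem~\ref{thm:pgroups1}. The $p$-group hypothesis does the essential work here, via centrality of minimal normal subgroups and the fact that iterated commutators among generators of $\C(\ab{G}/N_1)$ produce all required $N_1$-valued compatible functions without generating new obstructions outside the interval $\IntI[\{e\}, N_1]$.
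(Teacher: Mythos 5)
Your high-level strategy---decompose $\ob{L}$ along its cutting elements, show each piece is ``nice'', and reassemble---matches the paper's architecture, and several of your observations are correct (the intervals $\IntI[N_i, N_{i+1}]$ have no internal cutting elements; the splitting pair descends to the quotient; the decomposition $f = g \cdot h$ with $h$ taking values in $N_1$ is the right reassembly). However, there are genuine gaps that make the proposal unworkable as stated.

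First, in your $(1)\Rightarrow(2)$ argument you propose ``a further quotient that collapses cutting elements above $\bar N_{i+1}$.'' No quotient can do this: forming $\ab{G}/N$ collapses the part of the normal-subgroup lattice \emph{below} $N$, never above it. To isolate the interval $\IntI[N_i, N_{i+1}]$ you need, in addition to the quotient by $N_i$, a \emph{restriction} to the ideal $N_{i+1}/N_i$, i.e.\ the induced algebra $\ABar|_I$ in the paper's notation. Your proposal never forms this object; without it there is no way to cut off the top of the lattice and reach a simple congruence lattice.

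Second, the restriction $\ABar|_I$ is not a $p$-group but an expanded group (a group endowed with the restricted compatible functions as extra operations), and Theorem~\ref{thm:pgroups1} is stated for groups only. You cannot invoke it for $\ABar|_I$. What is actually needed is the Mal'cev-algebra version, Theorem~\ref{thm:malcev} (or the expanded-group Theorem~\ref{teorema}), whose proof requires showing the induced algebra is nilpotent, then supernilpotent, then exhibiting the unbounded family of absorbing polynomials; none of this comes for free from the $\Z_4 \times \Z_2$ obstruction you allude to. You do flag ``an expanded-group analogue of Theorem~\ref{thm:pgroups1}'' as the needed tool, but treating it as available is circular against the content of the paper, which devotes Sections 5--9 precisely to building it (Lemmas~\ref{lemica1}--\ref{abelian}, Lemma~\ref{ppo}, Proposition~\ref{Lema3.3.AM:SOCO}, the shift operators $T^n_{\bar\alpha}$, Lemma~\ref{Arestrik}, Theorem~\ref{thm:cut}). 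A related circularity: Theorem~\ref{thm:pgroups1} is presented in the paper only as a special case of Theorem~\ref{thm:pgroups} and has no independent proof, so taking it as the base of your induction is using the conclusion to prove itself.

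To summarize: the proposal captures the top-level plan of the paper's argument, but is missing the two operations that actually realize it---the restriction $\ABar|_I$ to an ideal and the resulting passage from groups to expanded groups/Mal'cev algebras---together with the substantial technical content (shift compatibility, lifting lemmas, supernilpotence and absorbing polynomials) needed to make the reassembly and the non-finite-generation witnesses rigorous.
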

The proof is given in Section \ref{sec:mainproof}.
As a consequence of this Theorem, the clone of congruence preserving functions of
the eight element dihedral group is finitely generated.
From this result, it is easy to determine the finite generation of congruence preserving functions
for all finite nilpotent groups, since we have:
\begin{thm} \label{thm:nilpotent}
    The clone of congruence preserving functions of a finite nilpotent group is
    finitely generated if and only if the clone of congruence preserving
    functions of every Sylow subgroup is finitely generated.
\end{thm}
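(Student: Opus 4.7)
The plan is to reduce to the Sylow decomposition of a finite nilpotent group $\ab{G} = \ab{G}_1 \times \cdots \times \ab{G}_m$, where each $\ab{G}_i$ is the Sylow $p_i$-subgroup. Let $\pi_i : G \to G_i$ denote the natural projection and $\iota_i : G_i \to G$ the corresponding inclusion. The pairwise coprimality of the $|G_i|$ forces every normal subgroup $N$ of $\ab{G}$ to decompose as $N = \prod_{i=1}^m (N \cap G_i)$; as a consequence, each $\pi_i$ and $\iota_i$ preserves every congruence of $\ab{G}$, so the idempotent endomorphisms $e_i := \iota_i \circ \pi_i$ are compatible unary functions on $\ab{G}$.

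From this I would derive the key decomposition: a function $f : G^k \to G$ lies in $\Comp(\ab{G})$ if and only if there exist $f_i \in \Comp_k(\ab{G}_i)$ with
\[
f(x_1, \ldots, x_k) = \prod_{i=1}^m \iota_i\bigl(f_i(\pi_i(x_1), \ldots, \pi_i(x_k))\bigr).
\]
One direction follows because $f$ preserves $\ker(\pi_i)$, so $\pi_i \circ f$ factors through $(\pi_i)^k$ and yields a compatible $f_i$; the converse is routine since each factor is a composition of compatible maps and elements of different Sylow subgroups commute in $\ab{G}$. Thus $\Comp(\ab{G})$ corresponds bijectively to $\prod_{i=1}^m \Comp(\ab{G}_i)$.

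For the ``if'' direction, given finite generating sets $S_i$ of $\Comp(\ab{G}_i)$, I would lift each $g \in S_i$ to the compatible function $\hat g(x_1, \ldots) := \iota_i(g(\pi_i(x_1), \ldots))$ on $\ab{G}$, and form the finite set consisting of all these lifts, the group multiplication, and the $e_i$. Writing $f \in \Comp(\ab{G})$ as the group-product of its lifted components $\hat f_i$, each $\hat f_i$ inherits a clone expression from that of $f_i$ over $S_i$, since $\pi_i \circ \iota_i = \mathrm{id}_{G_i}$ makes substitutions transfer faithfully through the lift.

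For the ``only if'' direction, given a finite generating set $F$ of $\Comp(\ab{G})$, I would set $\tilde h(y_1, \ldots, y_a) := \pi_i(h(\iota_i(y_1), \ldots, \iota_i(y_a)))$ for each $h \in F$ of arity $a$. The essential identity is
\[
\pi_i\bigl(h(z_1, \ldots, z_a)\bigr) = \tilde h\bigl(\pi_i(z_1), \ldots, \pi_i(z_a)\bigr),
\]
which holds because $h$ preserves the congruence $\ker(\pi_i)$ and $\pi_i \circ \iota_i \circ \pi_i = \pi_i$. Given $f_i \in \Comp(\ab{G}_i)$, its lift $\hat f_i$ on $\ab{G}$ can be expressed as a term in $F$; applying the identity recursively to this term and evaluating at $x_j = \iota_i(y_j)$ converts it into a term in the $\tilde h$'s that computes $f_i$, so the finitely many $\tilde h$ generate $\Comp(\ab{G}_i)$. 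The main technical point throughout is precisely this careful verification that clone composition transfers across the projection/inclusion diagram in both directions; everything else is bookkeeping around the Sylow decomposition.
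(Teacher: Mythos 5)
Your argument is correct, and it is essentially the approach the paper takes, just unfolded. The paper's proof of this theorem is a one-line reduction to Proposition~\ref{pro:dp2}, which asserts that for a \emph{skew-free} direct product $\ab{A} \times \ab{B}$ the clone $\Comp(\ab{A}\times\ab{B})$ is finitely generated iff both $\Comp(\ab{A})$ and $\Comp(\ab{B})$ are, combined with the fact that a finite nilpotent group is a skew-free direct product of its Sylow subgroups. Your observation that coprimality of the $|G_i|$ forces $N = \prod_i (N \cap G_i)$ \emph{is} the skew-freeness hypothesis, and your lift $g \mapsto \hat g$ together with the identity $\pi_i \circ h = \tilde h \circ \pi_i^{\,a}$ is exactly the mechanism behind the clone (function algebra) homomorphisms $\phi_{\ab{A}}, \phi_{\ab{B}}$ in the paper's proof of Proposition~\ref{pro:dp2}. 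The only genuine difference is cosmetic: the paper glues the lifted components using the abstract ``switch'' operation $e((a_1,b_1),(a_2,b_2)) = (a_1,b_2)$ (needed because it treats arbitrary algebras), whereas you exploit the group multiplication and the idempotents $e_i$ for the same purpose, and you work with the $m$-fold product all at once rather than inducting on binary products. One place to be slightly more careful: the lift $g \mapsto \hat g$ sends $\id_{G_i}$ to $e_i$ rather than to $\id_G$, so it is not itself a function-algebra homomorphism; your argument that ``substitutions transfer faithfully through the lift'' works precisely because you include the $e_i$ among the generators, but this deserves to be said explicitly, and it is the reason the paper instead runs the argument through the genuine homomorphisms $\phi_{\ab{A}}$ (in the direction $\Comp(\ab{A}\times\ab{B}) \to \Comp(\ab{A})$) via Lemma~\ref{lem:clohom}.
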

The proof follows from the more general result
Proposition~\ref{pro:dp2} below because every finite nilpotent
group is isomorphic to a skew-free direct product of its Sylow
subgroups. The result for $p$-groups can be generalized to certain
expansions of groups. We call an algebra $\A$ an \emph{expanded
group} if it has operations $+$ (binary), $-$ (unary) and $0$
(nullary) and its reduct $(A,+,-,0)$ is a (not necessarily abelian)
group. In expanded groups, congruences are described by ideals
\cite{Ku:LOGA}. A subset $I$ of $A$ is an \emph{ideal} of $\A$ if
$I$ is a normal subgroup of $\algop{A}{+}$, and for all $k\in
\mathbb{N}$, for all $k$-ary fundamental operations $f$ of $\A$, for
all $(a_1,\dots,a_k)\in A^k$ and $(i_1,\dots,i_k)\in I^k,$ we have
$$f(a_1+i_1,\dots,a_k+i_k)-f(a_1,\dots,a_k)\in I.$$ The set of all
ideals of $\A$ is denoted by $\mathrm{Id}(\A)$, and the lattice
$\algop{\mathrm{Id} (\A)}{+, \cap}$ is isomorphic to the congruence
lattice of $\A$ via the correspondence $\gamma : \Con(\ab{A}) \to
 \mathrm{Id} (\ab{A})$, $\alpha \mapsto 0/\alpha$.
We can
generalize Theorem~\ref{thm:pgroups} to those expanded groups whose
ideal lattice has no interval isomorphic to $\ob{M}_2$.
This is really a generalization because the normal subgroup lattice
of a $p$-group cannot have an interval isomorphic to $\ob{M}_2$. Another
class of expanded groups with $\ob{M}_2$-free ideal lattices is the
class of finite local commutative rings.
\begin{thm}\label{teorema}
    Let $\ab{A}$ be a finite expanded group with $\ob{M}_2$-free ideal lattice, and
    let  $\{0\} = S_0 < \cdots < S_n = A$ be the set of those ideals
    that cut the lattice $\mathrm{Id} (\ab{A})$. Then the following are equivalent:
    \begin{enumerate}
     \item \label{it:t1} The clone of congruence preserving functions of $\ab{A}$ is finitely generated.
     \item \label{it:t2} For each $i \in \{0, \ldots, n-1\}$, the interval $\IntI [S_i, S_{i+1}]$ of the lattice
           $\mathrm{Id} (\ab{A})$  either
           contains exactly the two elements $S_i$ and $S_{i+1}$,
           or $\IntI [S_i, S_{i+1}]$ does not split.
    \end{enumerate}
\end{thm}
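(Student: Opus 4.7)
The plan is to follow the architecture of the $p$-group theorem (Theorem~\ref{thm:pgroups}), replacing the $p$-group hypothesis with the weaker assumption that the ideal lattice is $\ob{M}_2$-free. I would proceed in two stages: first a reduction along the chain $\{0\} = S_0 < \cdots < S_n = A$, which exhibits $\mathrm{Id}(\ab{A})$ as a coalesced ordered sum of the intervals $\IntI[S_i, S_{i+1}]$, and then a case analysis of a single such interval, which by construction has no proper cutting element.

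For the reduction, I would prove that if $S$ is a cutting ideal of $\ab{A}$, then $\C(\ab{A})$ is finitely generated if and only if $\C(\ab{A}/S)$ is finitely generated together with a companion clone encoding the ``fiber'' over $S$ (the functions compatible with all ideals contained in $S$). Iterating this step along $S_0, \dots, S_n$ then reduces Theorem~\ref{teorema} to the case $n=1$, i.e.\ to the case where $\mathrm{Id}(\ab{A})$ has only $\{0\}$ and $A$ as cutting elements. This is the coalesced-ordered-sum analog of the skew-free direct product reduction used to deduce Theorem~\ref{thm:nilpotent} from Proposition~\ref{pro:dp2}.

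In this indecomposable case three subcases arise. If $\mathrm{Id}(\ab{A})$ has exactly two elements then $\ab{A}$ is simple, every finitary operation is compatible, and $\C(\ab{A})$ is generated by its binary members. If the lattice splits via a pair $(\delta,\eps)$, then, adapting the construction of \cite{Ai:2ACA} already exploited in the proof of Theorem~\ref{thm:pgroups1}, I would build for each $k \in \N$ a $k$-ary compatible function of essential arity $k$, obstructing finite generation; the construction picks nonzero elements inside $\eps$ and outside $\delta$ and uses the $\ob{M}_2$-free hypothesis to ensure that the natural ``indicator'' built from these witnesses is in fact congruence preserving. The third subcase, in which the lattice has more than two elements, is $\ob{M}_2$-free, and does not split, is where finite generation must be established; I would try to push the interpolation and commutator-theoretic techniques of \cite{AM:SOCO} through, using the fact (stated just before Theorem~\ref{teorema}) that any two minimal ideals $B,C$ admit an intermediate ideal in $\IntI[\{0\}, B+C]$ distinct from $B$ and $C$, itself a direct consequence of $\ob{M}_2$-freeness.

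The main obstacle will be exactly this last subcase. In the $p$-group setting one exploits centrality of minimal normal subgroups and the resulting commutator identities, but for a general expanded group with $\ob{M}_2$-free ideal lattice no such concrete group-theoretic description is available. The task is thus to exhibit an explicit finite subset of $\C(\ab{A})$ from which every compatible function can be obtained by composition and projections, which appears to require a delicate analysis of how ideals combine under sum and intersection in $\ob{M}_2$-free lattices, together with a uniform arity bound for compatible functions modulo a fixed finite subclone.
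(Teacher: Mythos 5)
Your architectural plan matches the paper: reduce along the cutting ideals $S_0<\cdots<S_n$ via an equivalence ``$\Comp(\ab{A})$ finitely generated $\iff$ $\Comp(\ab{A}/S_1)$ and $\Comp(\ABar|_{S_1})$ finitely generated'' (this is Theorem~\ref{thm:cut} together with Lemma~\ref{lem:factor} and Lemma~\ref{Arestrik}), and then treat each interval $\IntI[S_i,S_{i+1}]$ with no interior cutting element. That part is sound. But both of the nontrivial subcases of the interval analysis are left open in a way the paper actually closes, and the role of $\ob{M}_2$-freeness is misplaced.

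\textbf{The splitting subcase.} You propose to build compatible functions $c_k$ of essential arity $k$ for every $k$ and then declare that this ``obstructs finite generation.'' By itself it does not: the clone of \emph{all} operations on a finite set has operations of every essential arity and is generated by a single binary function. What makes these functions an obstruction in the paper is a chain of structural facts that you do not invoke: from $\Comp(\ab{A})$ finitely generated one forms a finite-type algebra $\ab{A}'$ whose congruence lattice equals $\mathrm{Id}(\ab{A})$; $\ob{M}_2$-freeness gives $\AP$, Lemma~\ref{lema1} then gives that the interval is \emph{simple}, Lemma~\ref{nilpotent} and Lemma~\ref{ppo} give nilpotence and prime-power order, and Proposition~\ref{Lema7.6.AM:SAOH} gives \emph{supernilpotence}. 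Only then does Corollary~\ref{Cor6.17.AM:SAOH} (absorbing polynomials of supernilpotent Mal'cev algebras have bounded essential arity) turn your absorbing functions $c_k$ into a contradiction. You also write that the $\ob{M}_2$-free hypothesis is what ensures the indicator is congruence preserving; it is not — compatibility of $c_k$ follows solely from the splitting pair via Proposition~\ref{pro:de}, while $\ob{M}_2$-freeness enters only through the simplicity argument just sketched.

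\textbf{The non-split subcase.} You describe the case ``$|\IntI[S_i,S_{i+1}]|>2$ and does not split'' as the main obstacle, requiring a ``delicate analysis'' of how ideals combine in $\ob{M}_2$-free lattices and a ``uniform arity bound modulo a fixed finite subclone.'' In fact the paper dispatches this case in two lines and without any appeal to $\ob{M}_2$-freeness: Proposition~\ref{Lema3.3.AM:SOCO} (from \cite{AM:SOCO}, which you cite but do not actually use) says that a finite algebra whose congruence lattice does not split is supernilpotent; applying this to $\ABar$ and then using Proposition~\ref{pr6.18.AM:SAOH} — polynomials of a $k$-supernilpotent Mal'cev algebra are generated by the $k$-ary polynomials together with a Mal'cev term — immediately gives that $\Pol(\ABar)=\Comp(\ab{A})$ is finitely generated (this is Lemma~\ref{Malcev21}). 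So the ``uniform arity bound'' you conjecture is needed is exactly what supernilpotence provides, and it is not delicate at all once the right lemma is on the table. This direction in fact holds without the $\ob{M}_2$-freeness assumption (Theorem~\ref{thm:oppositewithoutM2}); $\ob{M}_2$-freeness is needed only for the converse, i.e.\ to show that a splitting interval really blocks finite generation.
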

The proof of this theorem is given
in Section
\ref{sec:mainproof}. Moreover, we are going to prove that the implication
$(2)\Rightarrow (1)$ in this theorem is also true without the assumption that
the ideal lattice of $\ab{A}$ is  $\ob{M}_2$-free.
 More precisely, in Section~\ref{sec:mainproof}, we will prove the
following theorem.

\begin{thm}\label{thm:oppositewithoutM2}
Let $\ab{A}$ be an expanded group and let $\{0\} = S_0 < \cdots <
S_n = A$ be the set of those ideals that cut the lattice
$\mathrm{Id} (\ab{A})$. Suppose that for each $i \in \{0, \ldots,
n-1\}$, the interval $\IntI [S_i, S_{i+1}]$ of the lattice
$\mathrm{Id} (\ab{A})$ either contains exactly the two elements
$S_i$ and $S_{i+1}$, or the interval $\IntI [S_i, S_{i+1}]$ does not
split.
Then the clone of congruence
preserving functions of $\ab{A}$ is finitely generated.
\end{thm}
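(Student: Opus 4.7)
I would prove the theorem by induction on the number $n$ of layers determined by the cutting ideals $S_0 < \dots < S_n$. Each inductive step strips off the top cutting ideal, and the base case $n = 1$---when the whole ideal lattice is a single interval between cutting elements---contains the substantive content.

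\textbf{Inductive step.} Assume $n \ge 2$, and focus on the cutting ideal $S_{n-1}$. Every $f \in \Comp(\ab{A})$ preserves $S_{n-1}$ and induces $\bar f \in \Comp(\ab{A}/S_{n-1})$, whose ideal lattice has $n-1$ layers still satisfying the hypothesis, so by induction $\Comp(\ab{A}/S_{n-1})$ is finitely generated. Fix a finite set $H$ of compatible lifts of such generators to $\ab{A}$, and use the expanded group structure to decompose $f = \tilde f + g$, where $\tilde f$ is obtained from $H$ and $g$ takes values in $S_{n-1}$. Any ideal $I$ with $I \supseteq S_{n-1}$ already contains the image of $g$, so $g$ is compatible iff it respects only the ideals in $\IntI[\{0\},S_{n-1}]$; equivalently, the problem of generating $g$ is an instance of the base case for a single-layer ideal lattice. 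This reduces everything to finding finite generators for compatible functions concentrated in one layer.

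\textbf{Base case $n = 1$.} Here $\IntI[\{0\}, A]$ is a single interval between cutting elements and is by hypothesis either a two-element chain or non-splitting. In the two-element case $\ab{A}$ is simple, every function $A^k \to A$ is compatible, and $\Comp(\ab{A})$ is generated by its binary members. In the non-splitting case I would use the characterisation of compatibility in expanded groups---$f(\bar g) - f(\bar h)$ lies in the ideal generated by the coordinate-wise differences $\bar g - \bar h$---together with the Mal'cev term $d(x,y,z) := x - y + z$, to construct a finite generating set. Non-splitting supplies, for every proper ideal $I$ and every nonzero ideal $J$, a witness ideal $K$ with $K \not\subseteq I$ and $K \not\supseteq J$; by suitably selecting constants and applying $d$ to a bounded number of projections one obtains a finite family of compatible functions of bounded arity that, using the group operations, express any compatible $f$ of arbitrary arity.

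\textbf{Main obstacle.} The genuinely hard step is this non-splitting base case without the $\ob{M}_2$-free hypothesis. Without $\ob{M}_2$-freeness the combinatorics of witness ideals becomes more delicate, and the uniform arity bound on the generators requires extra care. The argument should parallel the corresponding finite-generation direction of Theorem~\ref{thm:malcev} for Mal'cev algebras with simple congruence lattice, adapted to a single non-splitting layer of an expanded group whose ambient ideal lattice may still be highly nontrivial; the flexibility afforded by the Mal'cev term $d$ and by the translations of the expanded group is what makes such an adaptation plausible.
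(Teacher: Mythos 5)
Your overall skeleton (induction on the number of layers, with a base case for a single non-splitting interval) matches the paper's strategy, but there are two concrete problems.

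\textbf{The inductive step does not reduce to a single-layer problem.} You strip off the top cutting ideal $S_{n-1}$ and decompose $f = \tilde f + g$ with $g$ taking values in $S_{n-1}$. You then assert that generating such $g$ is "an instance of the base case for a single-layer ideal lattice." This is not so: the ideals below $S_{n-1}$ still include all of $S_0 < \cdots < S_{n-1}$, so the relevant lattice $\IntI[\{0\}, S_{n-1}]$ has $n-1$ layers, not one. The paper instead strips the \emph{bottom} cutting ideal $S_1$ -- the interval $\IntI[\{0\}, S_1]$ genuinely is a single layer by hypothesis -- and works with the induced algebra $\ABar|_{S_1}$ on $S_1$ together with $\ab{A}/S_1$. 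The genuinely nontrivial part of the inductive step is the reduction theorem (Theorem~\ref{thm:cut}): $\Comp(\ab{A})$ is finitely generated if and only if both $\Comp(\ab{A}/S_1)$ and $\Comp(\ABar|_{S_1})$ are. Proving this requires the machinery of translations $T^n_{\bar\alpha}$, extensions $f^c$, and modified projections $\0nmI$ developed in the paper; none of this appears in your proposal, and the decomposition $f = \tilde f + g$ alone does not supply it (in particular you need that the compatible functions of the \emph{induced} algebra on the kernel, not merely functions respecting the lower ideals, are finitely generated, and you need a way to lift them back to $\ab{A}$).

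\textbf{The non-splitting base case is not actually proved.} Your sketch -- extract witness ideals $K$ from non-splitting and "by suitably selecting constants and applying $d$ to a bounded number of projections one obtains a finite family of compatible functions of bounded arity" -- is where the real content would have to live, but no mechanism is given for producing a uniform arity bound from the existence of witness ideals. The paper's route is quite different: pass to $\ABar = (A, \Comp(\ab{A}))$, observe it is a Mal'cev algebra whose congruence lattice does not split, invoke the commutator-theoretic fact (Proposition~\ref{Lema3.3.AM:SOCO}) that a finite algebra with non-splitting congruence lattice is supernilpotent, and then apply Proposition~\ref{pr6.18.AM:SAOH}: a $k$-supernilpotent Mal'cev algebra has $\Pol$ generated by $\Pol_k$ together with a Mal'cev term. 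It is the supernilpotence result that supplies the arity bound, and this step is the crux of the theorem; your proposal treats it as a technical detail to be filled in, which is where the argument has a genuine gap.
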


 If the congruence lattice of an algebra is of
a special shape, namely, if it is a simple lattice, an instance of
Theorem \ref{teorema} generalizes to arbitrary algebras with Mal'cev
term.
\begin{thr} \label{thm:malcev}
Let $\mathbf{A}$ be a finite algebra with a Mal'cev term,
and let $\mathbb{L}$ be the congruence lattice of $\mathbf{A}$.
If $\mathbb{L}$ is simple, then the following are equivalent:
\begin{enumerate}
\item $\mathrm{Comp}(\mathbf{A})$ is finitely generated;
\item $\mathbb{L}$ does not split, or $|\ob{L}| \le 2$.
\end{enumerate}
\end{thr}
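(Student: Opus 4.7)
For the implication $(2)\Rightarrow(1)$, I would distinguish two cases. If $|\mathbb{L}|\le 2$, then $\ab{A}$ is simple (or trivial); every finitary operation on $A$ is compatible, so $\Comp(\ab{A})$ equals the full clone on the finite set $A$, which is finitely generated. If $|\mathbb{L}|>2$ and $\mathbb{L}$ does not split, the plan is to replay the proof of Theorem~\ref{thm:oppositewithoutM2} in the Mal'cev setting. Since $\ab{A}$ has a Mal'cev term, $\mathbb{L}$ is modular, and the simplicity of $\mathbb{L}$ leaves only $\mathbf{0}$ and $\mathbf{1}$ as cutting elements, so only the single interval $\IntI[\mathbf{0},\mathbf{1}]=\mathbb{L}$ must be handled. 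The Mal'cev term $m(x,y,z)$ plays the role that group addition and subtraction did in the expanded-group argument, and the modular commutator of Freese--McKenzie replaces ideal arithmetic.

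For the implication $(1)\Rightarrow(2)$, I would argue contrapositively: assume $|\mathbb{L}|>2$ and that a splitting pair $(\delta,\eps)$ exists, and exhibit an infinite family of compatible functions that cannot be generated by any finite subset. Following the scheme used for $p$-groups (Theorem~\ref{thm:pgroups}) and for expanded groups (Theorem~\ref{teorema}), I would construct inductively a sequence $(f_n)_{n\ge 1}$ of $n$-ary compatible functions whose ``$(\delta,\eps)$-complexity'' grows with $n$; the Mal'cev term substitutes for group subtraction in the iteration step, and a dimension argument analogous to those in~\cite{Ai:2ACA,AM:SOCO} rules out finite generation.

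The main obstacle is the $(2)\Rightarrow(1)$ direction in the non-trivial case: converting the ideal-theoretic bookkeeping in the proof of Theorem~\ref{thm:oppositewithoutM2} into Mal'cev commutator computations. In the expanded-group setting one enjoys a canonical zero and genuine subtraction, so one can explicitly write polynomial representatives for compatible functions; without these tools the decomposition of a compatible function along the congruence lattice is more subtle, and the key step will be showing that Mal'cev-based ``differences'' of compatible functions retain the compatibility and decomposition properties needed to complete the induction.
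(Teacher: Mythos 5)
Your overall outline of the two implications is correct, but both legs rest on ideas you have not actually identified, and the one you flag as the ``main obstacle'' is precisely where the paper's argument is entirely different from what you envision.

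For $(2)\Rightarrow(1)$ you propose to ``replay the proof of Theorem~\ref{thm:oppositewithoutM2} in the Mal'cev setting,'' translating the ideal arithmetic into commutator computations. The paper does not do this, and for good reason: without a group reduct there is no clean analogue of the translation operators $T^n_{\vb\alpha}$ or of the extension $f\mapsto f^0$. Instead, the paper's Lemma~\ref{Malcev21} bypasses any decomposition along cutting elements. It passes to the full algebra $\ABar=(A,\Comp(\ab A))$, observes that $\Pol(\ABar)=\Comp(\ab A)$ and that $\Con(\ABar)=\mathbb{L}$, and then invokes Proposition~\ref{Lema3.3.AM:SOCO}: a finite algebra whose congruence lattice does not split is supernilpotent. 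For a $k$-supernilpotent Mal'cev algebra, Proposition~\ref{pr6.18.AM:SAOH} says that $\Pol_k(\ABar)\cup\{m\}$ already generates all polynomials, which is exactly the desired finite generating set. This supernilpotence route is the missing idea; the ``Mal'cev-based differences'' you gesture at do not obviously yield the needed decomposition, and you acknowledge you do not see how to carry it out.

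For $(1)\Rightarrow(2)$ your contrapositive plan is pointed in the right direction but omits the structural chain that makes it work. The paper shows: if $|\mathbb{L}|\ge 3$ and $\mathbb{L}$ is simple, then (Lemma~\ref{nilpotent}) the algebra is nilpotent, then (Lemma~\ref{ppo}) it has prime-power order, and therefore (by Kearnes's Proposition~\ref{Lema7.6.AM:SAOH}) the finite-type reduct $\ab{A}'=(A,G)$ is supernilpotent. The obstruction to finite generation is then concrete: given a splitting pair $(\delta,\eps)$, pick $o\ne c$ with $o\equiv_\eps c$ and $d\notin o/\delta$, and define $c_n(\vb x)=c$ if every $x_i\in o/\delta$ and $c_n(\vb x)=o$ otherwise. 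Proposition~\ref{pro:de} shows $c_n\in\Comp(\ab A)=\Pol(\ab A')$; $c_n$ depends on all $n$ arguments, and it is absorbing at $(d,\dots,d)$ with value $o$. Proposition~\ref{Cor6.17.AM:SAOH} then bounds the essential arity of absorbing polynomials in a $k$-supernilpotent Mal'cev algebra by $k$, a contradiction. Your ``dimension argument analogous to \cite{Ai:2ACA,AM:SOCO}'' is not wrong in spirit, but the concrete witnesses (the absorbing functions $c_n$), the supernilpotence of $\ab A'$, and the essential-arity bound for absorbing polynomials are all indispensable and unstated in your proposal. As written, the proposal is a sketch with a conceded gap, not a proof.
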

The proof is given in Section \ref{Malcev}.

\section{Preliminaries from universal algebra}

We will first state some elementary facts on congruence preserving
functions. For an algebra $\ab{A}$, we denote its congruence lattice
by $\Con (\ab{A})$. If $n \in \N$, $f \in \Comp_n (\ab{A})$ and
$\alpha \in \Con (\A)$, then $f^{\A / \alpha}$ given by
   \(
       f^{\A / \alpha} (a_1 / \alpha, \ldots, a_n / \alpha) :=
          f (a_1, \ldots, a_n)  / \alpha
   \) for all $a_1,\ldots, a_n \in A$
    is well-defined and belongs to $\Comp_{n} (\A / \alpha)$.
An important construction of congruence preserving functions
comes from splitting pairs in the congruence lattice.
For an $n$-tuple $\vb{a} = (a_1, \ldots, a_n) \in A^n$ and
a congruence $\alpha \in \Con (\ab{A})$, we let
$\vb{a} / \alpha := \{ (b_1, \ldots, b_n) \in A^n \setsuchthat
                       (b_1, a_1) \in \alpha, \ldots, (b_n, a_n) \in \alpha \}$.
\begin{pro} \label{pro:de}
    Let $\A$ be an algebra, let $n \in \N$, and let $(\delta, \eps)$ be
    a splitting pair of $\Con (\ab{A})$.
    Let $a \in A$, and  let $f : A^n \to A$ be a function with $f(A^n) \subseteq a / \eps$.
    We assume
    that for every $\vb{b} \in A^n$ there exists a function $g_{\vb{b}} \in \Comp_n (\ab{A})$
    such that for all $\vb{x} \in \vb{b}/\delta$, we have
    $f(\vb{x}) = g_{\vb{b}} (\vb{x})$.
     Then $f \in \Comp (\ab{A})$.
\end{pro}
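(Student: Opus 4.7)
The plan is to fix an arbitrary congruence $\alpha \in \Con(\ab{A})$ and verify $f \rhd \alpha$ by exploiting the splitting hypothesis on $(\delta,\eps)$, which forces $\alpha \le \delta$ or $\alpha \ge \eps$. The two resulting cases handle the two hypotheses of the proposition: the local agreement with the $g_{\vb{b}}$'s kicks in for congruences below $\delta$, while the fact that $f$ takes values in a single $\eps$-class handles congruences above $\eps$.

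First I would treat the case $\alpha \le \eps$ -- more precisely $\alpha \ge \eps$, i.e.\ $\eps \subseteq \alpha$ as binary relations. For any $\vb{x}, \vb{y} \in A^n$ with $(x_i, y_i) \in \alpha$ for all $i$, the hypothesis $f(A^n) \subseteq a/\eps$ places both $f(\vb{x})$ and $f(\vb{y})$ in the single $\eps$-class of $a$. Since $\eps \subseteq \alpha$, transitivity of $\alpha$ via $a$ immediately yields $(f(\vb{x}), f(\vb{y})) \in \alpha$. This case is essentially free.

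Next I would treat the case $\alpha \le \delta$. Given $(x_i, y_i) \in \alpha$ for all $i$, we have $(x_i, y_i) \in \delta$ too, so $\vb{y} \in \vb{x}/\delta$. Setting $\vb{b} := \vb{x}$, the hypothesis supplies a congruence preserving function $g_{\vb{x}} \in \Comp_n(\ab{A})$ with $f(\vb{z}) = g_{\vb{x}}(\vb{z})$ for every $\vb{z} \in \vb{x}/\delta$; in particular $f(\vb{x}) = g_{\vb{x}}(\vb{x})$ and $f(\vb{y}) = g_{\vb{x}}(\vb{y})$. Since $g_{\vb{x}} \rhd \alpha$, we conclude $(f(\vb{x}), f(\vb{y})) \in \alpha$.

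Combining the two cases, $f$ preserves every congruence of $\ab{A}$, so $f \in \Comp(\ab{A})$. I do not anticipate a genuine obstacle here; the content of the proposition is really just the observation that the splitting property of $(\delta,\eps)$ converts ``locally compatible on $\delta$-classes'' plus ``globally constrained by $\eps$'' into full compatibility, and both halves of the case split are one-line verifications.
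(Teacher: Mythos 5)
Your proof is correct and takes essentially the same approach as the paper. The only cosmetic difference is that you case-split directly on whether $\alpha \le \delta$ or $\alpha \ge \eps$, while the paper first fixes the tuples $\vb{x},\vb{y}$ and cases on whether they are coordinatewise $\delta$-related, deriving $\varphi \ge \eps$ in the second case; the two hypotheses are used in exactly the same way.
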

\emph{Proof:}
     Let $\vb{x}, \vb{y} \in A^n$, and let $\varphi \in \Con (\A)$ be such that
     $(x_i, y_i) \in \varphi$ for all $i \in \{1,\ldots, n\}$.
     We show $(f(\vb{x}), f (\vb{y})) \in \varphi$.
          The first case is that $(x_i, y_i) \in \delta$ for all $i \in \{1,\ldots, n\}$.
     Then there is a $\vb{b} \in A^n$ such that $\vb{x}/\delta = \vb{b}/\delta$ and
     $\vb{y} / \delta = \vb{b} / \delta$. Let $g_{\vb{b}}$ be the congruence preserving
     function that interpolates $f$ on $\vb{b} / \delta$. Since $g_{\vb{b}}$ is congruence
     preserving, $(g_{\vb{b}} (\vb{x}), g_{\vb{b}} (\vb{y})) \in \varphi$, and therefore
     $(f(\vb{x}), f(\vb{y})) \in \varphi$.
      The second case is that there is an $i \in \{1,\ldots, n\}$ with
     $(x_i, y_i) \not\in \delta$. Then $\varphi \not\le \delta$, and
     therefore $\varphi \ge \eps$. Since $f (A^n) \subseteq a / \eps$,
      we have $(f(\vb{x}), f (\vb{y})) \in \eps$, and thus $(f (\vb{x}), f(\vb{y})) \in \varphi$.
     \qed

     This property will be particularly useful in the case that $\alpha$ cuts
     the congruence lattice of $\ab{A}$. In this case, either $\alpha \in \{{\mathbf 0}, \mathbf{1}\}$,
     or we may use Proposition~\ref{pro:de} with $\delta = \eps = \alpha$.

We will now investigate how congruence preserving functions act on
direct products. Let $\ab{A}, \ab{B}$ be similar algebras, and let
$n \in \N$. For $\vb{a} \in A^n$ and $\vb{b} \in B^n$, let $(\vb{a},
\vb{b})^T$ denote the tuple $ ( (a_1, b_1), \ldots, (a_n, b_n)) \in
(A \times B)^n$. For $\alpha \in \Con (\ab{A})$ and $\beta \in \Con
(\ab{B})$, we write $\alpha \times \beta$ for the congruence of
$\ab{A} \times \ab{B}$ given by
\[
   \alpha \times \beta = \{ ((a_1, b_1), (a_2, b_2)) \setsuchthat (a_1, a_2) \in \alpha, (b_1, b_2) \in \beta \}.
\]
\begin{pro}[{\cite[Lemma~4]{No:UDAV}}]  \label{pro:hfg}
   Let $\ab{A}$ and $\ab{B}$ be similar algebras, and let $h \in \Comp_n (\ab{A}
   \times \ab{B})$. Then there are $f \in \Comp_n (\ab{A})$ and $g \in \Comp_n (\ab{B})$
   such that
   \begin{equation} \label{eq:hab}
         h ( (\vb{a}, \vb{b})^T ) = (f (\vb{a}), g (\vb{b}))
   \end{equation}
   for all $\vb{a} \in A^n$, $\vb{b} \in B^n$.
\end{pro}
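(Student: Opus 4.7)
The plan is to exploit the two projection kernels of $\ab{A}\times\ab{B}$, which are automatically congruences of the product algebra, to force $h$ to split into two independent components. Let $\kappa_A := \{((a_1,b_1),(a_2,b_2)) \setsuchthat a_1 = a_2\}$ be the kernel of the projection $\ab{A}\times\ab{B} \to \ab{B}$, and let $\kappa_B := \{((a_1,b_1),(a_2,b_2)) \setsuchthat b_1 = b_2\}$ be the kernel of the projection $\ab{A}\times\ab{B} \to \ab{A}$. Both are congruences, and since $h \in \Comp_n(\ab{A}\times\ab{B})$, it preserves both.

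Preservation of $\kappa_A$ by $h$ says that if the input tuples $(\vb{a},\vb{b})^T$ and $(\vb{a},\vb{b}')^T$ agree componentwise in the $A$-coordinate, then so do the outputs. Hence the $A$-coordinate of $h((\vb{a},\vb{b})^T)$ depends only on $\vb{a}$, and I would define $f(\vb{a})$ to be this common value. Symmetrically, $\kappa_B$ yields a well-defined $g : B^n \to B$ with $g(\vb{b})$ equal to the $B$-coordinate of $h((\vb{a},\vb{b})^T)$. Equation~\eqref{eq:hab} then holds by construction.

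It remains to show that $f$ preserves every $\alpha \in \Con(\ab{A})$; the argument for $g$ is completely symmetric. The step I would use here is to lift $\alpha$ to the relation $\hat{\alpha} := \{((a_1,b_1),(a_2,b_2)) \setsuchthat (a_1,a_2) \in \alpha\}$ on $A\times B$ and verify componentwise that $\hat\alpha$ is a congruence of $\ab{A}\times\ab{B}$. Given $(a_i, a_i') \in \alpha$ for $i = 1,\ldots, n$, I pick any $\vb{b} \in B^n$ and apply preservation of $\hat\alpha$ by $h$ to the pairs $((a_i, b_i),(a_i', b_i))$; reading off the $A$-coordinate of the resulting related pair gives $(f(\vb{a}), f(\vb{a}')) \in \alpha$.

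Main obstacle: there is essentially none here --- the result is elementary universal algebra and each step is routine. The only point one must not overlook is the verification that $\kappa_A$, $\kappa_B$, and each $\hat\alpha$ really are congruences of the product algebra, which follows immediately from the componentwise definition of the operations on $\ab{A}\times\ab{B}$.
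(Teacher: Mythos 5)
Your proof is correct. The paper itself gives no proof of Proposition~\ref{pro:hfg}; it simply cites it as Lemma~4 of N\"obauer~\cite{No:UDAV}, so there is no in-paper argument to compare against. Your approach --- using the two projection kernels as congruences to force $h$ to decompose componentwise, then lifting $\alpha \in \Con(\ab{A})$ to the product congruence $\alpha \times \nabla_B$ to transfer compatibility back down to $f$ --- is the standard derivation of this fact and is essentially what one finds in N\"obauer's original argument. One small slip in bookkeeping: with your definition $\kappa_A = \{((a_1,b_1),(a_2,b_2)) \setsuchthat a_1 = a_2\}$, this is the kernel of the projection onto $\ab{A}$, not onto $\ab{B}$ (and dually for $\kappa_B$); the descriptions and the definitions are swapped, though the subsequent reasoning correctly uses the definitions, so nothing breaks.
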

  We note that given $h$,
   the pair $(f,g)$ is uniquely determined by~\eqref{eq:hab}.
We will denote the functions $f$ and $g$ that satisfy \eqref{eq:hab} also by
$\phi_{\ab{A}} (h)$ and $\phi_{\ab{B}} (h)$.
A direct product of two similar algebras $\ab{A} \times \ab{B}$ is called \emph{skew-free}
\cite{BS:ACIU},
if for every congruence $\gamma \in \Con (\ab{A} \times \ab{B})$, there are congruences
$\alpha \in \Con (\ab{A})$ and $\beta \in \Con (\ab{B})$ such that
$\gamma = \alpha \times \beta$.
\begin{pro}[cf. {\cite[Satz~1]{No:UDAV}}] \label{pro:dp1}
   Let $n \in \N$, and  $\ab{A}, \ab{B}$ be two similar algebras.
   We assume in addition that the direct
   product $\ab{A} \times \ab{B}$ is skew-free. Then we have:
   \begin{enumerate}
      \item \label{it:d1} For every $f \in \Comp_n (\ab{A})$ and $g \in \Comp_{n} (\ab{B})$,
            the function $h : (A \times B)^n \to A \times B$,
            $h ( (\vb{a}, \vb{b})^T ) :=
            (f (\vb{a}), g (\vb{b}))$ satisfies $h \in \Comp_n (\ab{A} \times \ab{B})$.
      \item \label{it:d2} The function $e : (A \times B)^2 \to A \times B$, $e ( (a_1, b_1), (a_2, b_2) ) :=
            (a_1, b_2)$ is a congruence preserving function of $\ab{A} \times \ab{B}$.
    \end{enumerate}
\end{pro}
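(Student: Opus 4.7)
The plan is to reduce both items to the case of a product congruence by invoking the skew-free hypothesis exactly once at the outset, and then to verify preservation coordinate-by-coordinate. I would fix an arbitrary $\gamma \in \Con(\ab{A} \times \ab{B})$; skew-freeness provides $\alpha \in \Con(\ab{A})$ and $\beta \in \Con(\ab{B})$ with $\gamma = \alpha \times \beta$, and from this point on the argument only uses $f \rhd \alpha$, $g \rhd \beta$, and the definition of $\alpha \times \beta$ stated just before the proposition.

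For item~\eqref{it:d1}, I would pick $(\vb{a}, \vb{b})^T, (\vb{a}', \vb{b}')^T \in (A \times B)^n$ with $((a_i, b_i), (a_i', b_i')) \in \gamma$ for every $i \in \{1, \ldots, n\}$. Unpacking, this amounts to $(a_i, a_i') \in \alpha$ and $(b_i, b_i') \in \beta$ for all $i$. Since $f$ preserves $\alpha$ and $g$ preserves $\beta$, it follows that $(f(\vb{a}), f(\vb{a}')) \in \alpha$ and $(g(\vb{b}), g(\vb{b}')) \in \beta$, hence $(h((\vb{a}, \vb{b})^T), h((\vb{a}', \vb{b}')^T)) = ((f(\vb{a}), g(\vb{b})), (f(\vb{a}'), g(\vb{b}'))) \in \alpha \times \beta = \gamma$, as required.

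Item~\eqref{it:d2} is handled in exactly the same spirit: with $\gamma = \alpha \times \beta$ and $((a_1, b_1), (a_1', b_1')) \in \gamma$ and $((a_2, b_2), (a_2', b_2')) \in \gamma$, the first pair supplies $(a_1, a_1') \in \alpha$ and the second supplies $(b_2, b_2') \in \beta$, which together give $((a_1, b_2), (a_1', b_2')) \in \alpha \times \beta$, i.e.\ $e$ preserves $\gamma$. I anticipate no real obstacle: once skew-freeness has been applied, both items reduce to routine coordinate chases. The only point worth flagging is that skew-freeness is genuinely needed in \eqref{it:d2}, since the coordinate-swapping function $e$ typically fails to preserve diagonal-type congruences that correlate the two factors, so the hypothesis is not merely a convenience.
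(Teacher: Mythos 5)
Your proof is correct, and the core argument (decompose $\gamma$ as $\alpha \times \beta$ via skew-freeness, then chase coordinates) is exactly the expected one. The paper itself is terser: for item~(1) it simply cites N\"obauer's Satz~1 rather than reproving it, and for item~(2) it observes that $e$ is the special case of item~(1) obtained by taking $f(x_1,x_2) := x_1$ and $g(x_1,x_2) := x_2$ (the binary projections, which are trivially congruence preserving), rather than repeating the coordinate chase. Your version spells out the argument the citation elides and handles item~(2) directly instead of by reduction; both are sound, and the reduction in the paper is only a small economy. Your closing remark that skew-freeness is genuinely needed for $e$ (because of diagonal-type congruences) is a good observation and consistent with the paper's later discussion of $\ob{M}_2$-shaped ideal lattices.
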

\emph{Proof:}
    Item  \eqref{it:d1} is Satz~1 from \cite{No:UDAV}.
    For item~\eqref{it:d2}, we apply~\eqref{it:d1} for $f(x,y) := x$ and $g(x,y) := y$. \qed

Given an algebra $\A$ and $k\in\mathbb{N}$, a function $f:A^k\to A$ is called
a \emph{$k$-ary term function} if there is a $k$-ary term $t$ in the language of $\A$ such that
$f(x_1, \ldots, x_k) = t^{\ab{A}} (x_1,\ldots, x_k)$ for all $x_1,\ldots, x_k \in A$.
The set of term functions on $\A$ will be denoted by $\Clo (\A)$.
 The function $f$ is a
 \emph{$k$-ary polynomial function}
(or \emph{polynomial}) of $\A$ if there are a natural number $l$, elements $a_1,\dots,a_l\in A$, and a $(k+l)$-ary term
$t$ in the language of $\A$ such that
$$f(x_1,\dots,x_k)=t^{\A}(x_1,\dots,x_k,a_1,\dots,a_l)$$
for all $x_1,\dots,x_k\in A$. By $\mathrm{Pol}_k(\A)$ we denote the set of all $k$-ary polynomials of $\A$, and
$\mathrm{Pol}(\A):=\bigcup_{k\in\N}\mathrm{Pol}_k(\A)$. Note that $\mathrm{Pol}(\A)$ is a clone for each algebra $\A$
and we call it the \emph{clone of polynomial functions} of $\A$.

\begin{df}
 Let $\A$ be an algebra, let $k\in\mathbb{N}$, let $p:A^k\rightarrow A$, let $(a_1,\dots, a_{k})\in A^k,$ and let $o\in A$.
Then $p$ is \emph{absorbing at $(a_1,\dots, a_{k})$ with value $o$} if for all $(x_1,\dots,x_{k})\in A^k$ we have:
if there is an $i\in \{1,\dots,k\}$ with $x_i=a_i$, then $p(x_1,\dots,x_{k}) = o$.

\end{df}

A ternary operation $m$ on a set $A$ is said to be a \emph{Mal'cev operation}
if we have $m(x,x,y)=y=m(y,x,x)$  for all $x,y\in A$.
An algebra $\A$ is called a \emph{Mal'cev algebra}
if $\A$ has a Mal'cev operation among its ternary term functions.
For congruences $\alpha$ and $\beta$ of $\A$, we will denote their
\emph{binary commutator} by $[\alpha,\beta]_{\A}$ (see
\cite{FM:CTFC} and \cite[p.252]{MMT:ALVV}). We are going to omit the
index that denotes the algebra whenever it is clear from the
context.
 Here are some properties of binary commutators for congruence modular varieties
that we are going to use. For all $\alpha,\beta\in\mathrm{Con}(\A)$:
\begin{itemize}
\item[(BC1)] $[\alpha,\beta]\leq \alpha\land\beta;$
\item[(BC2)] for all $\gamma,\delta\in \mathrm{Con}(\A)$ such that $\alpha\leq \gamma, \beta\leq\delta,$ we have $[\alpha,\beta]\leq [\gamma,\delta]$;
\item[(BC7)] if $\A$ generates a congruence permutable variety, then for a nonempty set $I$ and $\{\rho_i\mid i\in I\}\subseteq \mathrm{Con}(\A)$ we have that
    $$\bigvee_{i\in I}[\alpha,\rho_i]=[\alpha,\bigvee_{i\in I}\rho_i] \,\;\mbox{ and }\,\; \bigvee_{i\in I}[\rho_i,\beta]=[\bigvee_{i\in I}\rho_i,\beta].$$
\end{itemize}
\emph{Higher commutators} have been introduced in \cite{Bu:OTNO};
their properties in congruence permutable varieties have been
investigated in \cite{AM:SAOH}. For $k \in \N$, an algebra is called
\emph{$k$-supernilpotent} if $[\underbrace{{\mathbf 1},\dots
,{\mathbf 1}}_{k+1}]={\mathbf 0}$. An algebra $\mathbf{A}$ is called
\emph{supernilpotent} if there exists a $k\in \mathbb{N}$ such that
$\mathbf{A}$ is $k$-supernilpotent. Following \cite[p.58]{FM:CTFC}
we call an algebra $\A$ \emph{nilpotent} if in the chain of
congruences of $\A$ defined by \(
   \gamma_1:=[{\mathbf 1},{\mathbf 1}], \text{ and }
   \gamma_i:=[{\mathbf 1},\gamma_{i-1}] \text{ for every } i>1,
\) there is an $n\in \mathbb{N}$ such that $\gamma_n={\mathbf 0}$.

The following results concerning supernilpotent algebras will be used in the proof of the Theorem \ref{thm:malcev}.

\begin{pr}(cf. \cite[Theorem 3.14]{Ke:CMVW})\label{Lema7.6.AM:SAOH}
Let $\A$ be a finite nilpotent algebra of finite type that generates a congruence modular variety. Then
$\A$ factors as a direct product of algebras of prime power cardinality if and only if
$\A$ is a supernilpotent Mal'cev algebra.
\end{pr}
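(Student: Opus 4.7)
My plan is to prove the two implications of the equivalence separately, using the higher commutator theory in congruence modular varieties developed in \cite{Bu:OTNO} and \cite{AM:SAOH}. In both directions the decisive point is the behaviour of the higher commutator $[{\mathbf 1},\dots,{\mathbf 1}]_{k+1}$ under direct products and under passage to factors of prime power order; the Mal'cev term is either inherited from a Mal'cev algebra (when a direct factor is taken) or supplied by hypothesis.

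For the implication that a direct product of prime power algebras is supernilpotent Mal'cev, assume $\ab{A} \cong \ab{A}_1 \times \cdots \times \ab{A}_m$ with each $|A_i|$ a prime power. Each factor $\ab{A}_i$ inherits nilpotence from $\ab{A}$ and generates a congruence modular variety. First I would show that every finite nilpotent algebra of prime power order in a congruence modular variety is a supernilpotent Mal'cev algebra: the Mal'cev term comes from the loop structure attached to nilpotent algebras in congruence modular varieties (see~\cite{FM:CTFC}), and a bound on the supernilpotence class follows by induction along the central series, using the prime power order of $A_i$ to bound the length of a descending chain of higher commutators. Second, I would verify that supernilpotence passes to finite direct products: in a skew-free product one has the identity $[\alpha_1\times\beta_1,\dots,\alpha_r\times\beta_r] = [\alpha_1,\dots,\alpha_r]\times[\beta_1,\dots,\beta_r]$ for higher commutators, so supernilpotence of class $\le k$ in each factor yields supernilpotence of class $\le k$ in the product.

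For the converse, that a supernilpotent Mal'cev algebra $\ab{A}$ decomposes as a direct product of prime power algebras, I would proceed by induction on $|A|$, imitating the Sylow decomposition of finite nilpotent groups. In a finite nilpotent Mal'cev algebra the center is nontrivial and polynomially equivalent to an abelian group (cf.~\cite{FM:CTFC}), hence it decomposes canonically into prime-power ``torsion'' pieces. The heart of the argument is to lift this central decomposition to a full direct product decomposition of $\ab{A}$: for each prime $p$ dividing $|A|$ one constructs a congruence $\theta_p$ whose quotient $\ab{A}/\theta_p$ has $p$-power order and such that $\bigwedge_p \theta_p = {\mathbf 0}$, and then verifies that the $\theta_p$ factor $\ab{A}$ directly rather than only subdirectly. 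The main obstacle should be precisely this last step: nilpotence alone yields only a subdirect decomposition, and it is the supernilpotence hypothesis, together with the higher-commutator identities from \cite{AM:SAOH}, that forces the $\theta_p$ for distinct primes $p$ to commute and jointly generate the top congruence, upgrading the subdirect decomposition to a direct one.
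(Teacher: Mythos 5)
The paper does not prove this proposition at all: it is cited verbatim (with a ``cf.'') from Kearnes' Theorem~3.14 in \cite{Ke:CMVW} and used as a black box, so there is no in-paper proof to compare against. Reading your sketch on its own merits, the most serious gap is in the forward direction, where you claim that each prime-power factor $\ab{A}_i$ acquires a Mal'cev term from ``the loop structure attached to nilpotent algebras in congruence modular varieties.'' That is circular: the loop structure in \cite{FM:CTFC} is \emph{constructed from} a given Mal'cev term on a nilpotent Mal'cev algebra, and a nilpotent algebra in a CM variety need not be Mal'cev. That a finite nilpotent CM algebra of prime power order is Mal'cev is itself a nontrivial theorem that your argument would need to establish or cite independently. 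Your next step --- that $\ab{A}_i$ is supernilpotent ``by induction along the central series, using the prime power order to bound a descending chain of higher commutators'' --- is also not worked out and is not obviously correct: the higher commutators $[\mathbf{1},\dots,\mathbf{1}]$ form a decreasing chain that can stabilize strictly above $\mathbf{0}$ for a nilpotent Mal'cev algebra of composite order, and the mechanism by which prime power order forces termination is precisely the hard content of the theorem, not something that falls out of the central series.

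On the converse direction, your diagnosis that supernilpotence is needed ``to force the $\theta_p$ for distinct primes $p$ to commute'' is off target: in a Mal'cev algebra \emph{all} congruences permute. What supernilpotence must supply is the existence of the prime-power kernels and the fact that the resulting subdirect decomposition is actually direct, which is a different (and harder) point. Finally, for orientation: Kearnes' own proof of Theorem~3.14 is driven by the growth rate of the free spectrum (the title of \cite{Ke:CMVW}), linking the direct-product decomposition, supernilpotence, and polynomially bounded $\log|F_V(n)|$ in a three-way equivalence --- a substantially different route from the commutator-theoretic induction you propose, and one in which the difficult implications you left as sketches are handled by spectrum estimates rather than by a Sylow-style argument.
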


A function $f:A^n \to A$ \emph{depends} on its $i$th argument if there
are $a_1, \ldots, a_n, b_i \in A$ such that
$f(a_1, \ldots, a_n) \neq f(a_1, \ldots, a_{i-1}, b_i, a_{i+1},\ldots, a_n)$.
The \emph{essential arity} of $f$ is the number of arguments on which 
$f$ depends.
\begin{pr}(cf. \cite[Corollary 6.17]{AM:SAOH})\label{Cor6.17.AM:SAOH}
Let $k\in\N$. In a $k$-supernilpotent Mal'cev algebra, every absorbing polynomial has essential arity at most $k$.
\end{pr}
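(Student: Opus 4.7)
The plan is to reduce to a $(k{+}1)$-ary absorbing polynomial and then invoke a characterization of $k$-supernilpotency that forces every such polynomial to be constant.

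Suppose $p$ has arity $n$ and is absorbing at $(a_1,\ldots,a_n)$ with value $o$. If $n\le k$ there is nothing to prove, so assume $n>k$; we will show that $p$ is constantly $o$, which gives essential arity $0\le k$. Fix any $b_{k+2},\ldots,b_n\in A$ and define $p'(x_1,\ldots,x_{k+1}):=p(x_1,\ldots,x_{k+1},b_{k+2},\ldots,b_n)$. Then $p'$ is a polynomial of $\A$, and whenever $x_i=a_i$ for some $i\in\{1,\ldots,k{+}1\}$ we have $p'(\vec x)=o$ by the absorption hypothesis on $p$. Hence $p'$ is a $(k{+}1)$-ary absorbing polynomial at $(a_1,\ldots,a_{k+1})$ with value $o$.

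It therefore suffices to show that in a $k$-supernilpotent Mal'cev algebra every $(k{+}1)$-ary absorbing polynomial is constant. Since $\A$ is Mal'cev and nilpotent (as $k$-supernilpotency implies nilpotency by iterating (BC2)), \cite[Ch.~7]{FM:CTFC} supplies polynomial operations $+,-,0$ making $(A,+,-,0)$ a nilpotent loop. Using these, replace $p'$ by $q(\vec x):=p'(x_1+a_1,\ldots,x_{k+1}+a_{k+1})-o$; since each $x\mapsto x+a_i$ is a bijection in the loop, $q$ is absorbing at $\vec 0$ with value $0$, and $q$ is constant if and only if $p'$ is.

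Now invoke the iterated-difference characterization of higher commutators in Mal'cev algebras proved in \cite{AM:SAOH}: because $\A$ satisfies $[\underbrace{\mathbf 1,\ldots,\mathbf 1}_{k+1}]=\mathbf 0$, for every polynomial $t:A^{k+1}\to A$ one has
\[
    \sum_{S\subseteq\{1,\ldots,k+1\}}(-1)^{k+1-|S|}\,t(\vec x^S)=0 \qquad (\vec x\in A^{k+1}),
\]
where $\vec x^S$ arises from $\vec x$ by resetting $x_i$ to $0$ for $i\notin S$ and the sum is taken in the loop. Applying this to $q$, every summand with $S\subsetneq\{1,\ldots,k{+}1\}$ vanishes, because $\vec x^S$ then has a zero coordinate and $q$ absorbs at $\vec 0$; the sum collapses to $q(\vec x)=0$. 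Untranslating yields $p'\equiv o$, and since $b_{k+2},\ldots,b_n$ were arbitrary, $p\equiv o$ as required. The main obstacle is the iterated-difference identity itself: this is the technical heart of \cite{AM:SAOH} and is where the Mal'cev assumption is genuinely used; the argument there proceeds by induction on $k$, using the recursive definition of the higher commutator together with congruence permutability and the distributivity identity (BC7) to convert the defining term condition of $[\mathbf 1,\ldots,\mathbf 1]_{k+1}=\mathbf 0$ into the above polynomial identity.
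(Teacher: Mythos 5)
The paper gives no proof of this proposition: it is imported verbatim from \cite[Corollary~6.17]{AM:SAOH}, so there is no internal argument to compare your proposal against. Your preliminary reductions are sound and standard. An absorbing polynomial either is constant or depends on all of its arguments, so one may fix $b_{k+2},\ldots,b_n$ arbitrarily and pass to arity exactly $k+1$; and the translation by the polynomial ``loop'' operations (which are bijective in each coordinate because a supernilpotent, hence nilpotent, Mal'cev algebra has injective translations, cf.\ the argument inside the proof of Lemma~\ref{ppo}) further reduces to the case of absorption at $\vec{0}$. One inaccuracy: supernilpotency implies nilpotency via a monotonicity property of \emph{higher} commutators, namely $[\mathbf 1,[\mathbf 1,\ldots,\mathbf 1]_j]\leq[\mathbf 1,\ldots,\mathbf 1]_{j+1}$, not via the binary property (BC2).

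The genuine gap is the ``iterated-difference identity''. As stated, the alternating sum $\sum_S(-1)^{k+1-|S|}t(\vec{x}^S)$ has no canonical meaning: the loop structure $(A,+,-,0)$ obtained from the Mal'cev term is not guaranteed to be associative when $k\geq 2$, so the expression depends on a bracketing you never specify. More fundamentally, you do not prove the identity at all; you declare it to be ``the technical heart of \cite{AM:SAOH}'' and use it as a black box. But Corollary~6.17 of \cite{AM:SAOH} \emph{is} the proposition under review, so outsourcing its decisive step back to \cite{AM:SAOH} renders the proposal circular: it sketches a route by which the cited result could plausibly be proved but does not contain a proof. To close the gap you would need either to formulate and establish the difference decomposition precisely (with a fixed bracketing, derived from the term condition defining $[\mathbf 1,\ldots,\mathbf 1]_{k+1}=\mathbf 0$ together with congruence permutability), or to argue directly from the higher term condition that any $(k+1)$-ary polynomial absorbing at $\vec{0}$ must be constantly $0$; as it stands, the step that would carry the whole argument is asserted rather than proved.
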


\begin{pr}(cf. \cite[Lemma 3.3]{AM:SOCO})\label{Lema3.3.AM:SOCO}
 Every finite algebra whose congruence lattice does not split is supernilpotent.
\end{pr}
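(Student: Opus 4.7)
The plan is to prove the contrapositive: if a finite algebra $\A$ is not supernilpotent, then its congruence lattice $\Con(\A)$ splits. I would start from the descending chain of higher commutators $\rho_n := [{\mathbf 1}, \ldots, {\mathbf 1}]_n$. Since $\Con(\A)$ is finite, this chain must stabilize at some $\rho_\infty$, and non-supernilpotence of $\A$ is precisely the statement $\rho_\infty > {\mathbf 0}$. I would then pick a congruence $\eps$ with ${\mathbf 0} < \eps \le \rho_\infty$ as one half of the candidate splitting pair.

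From $\eps$ I would manufacture the other half $\delta$ by a centralizer-style construction. Consider those congruences $\beta$ such that no higher commutator of the form $[\beta, {\mathbf 1}, \ldots, {\mathbf 1}]_n$ contains $\eps$. The stabilization $\rho_n = \rho_\infty \ge \eps$ for large $n$ witnesses that ${\mathbf 1}$ itself fails this condition, so if I take $\delta$ to be a congruence maximal with the property, then $\delta < {\mathbf 1}$. Conversely, any $\alpha \in \Con(\A)$ with $\alpha \not\le \delta$ must, by maximality of $\delta$, produce some higher commutator with an entry equal to (or containing) $\alpha$ that is $\ge \eps$; by the absorption inclusion $[\alpha_1, \ldots, \alpha_n] \le \alpha_i$ that holds in any algebra, this forces $\alpha \ge \eps$. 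Thus we obtain the dichotomy $\alpha \le \delta$ or $\alpha \ge \eps$ required by the definition of a splitting pair.

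The main obstacle is that higher commutators are only known to be additive and symmetric under congruence modularity or permutability (the setting for BC1--BC7 above), but the proposition is stated with no such assumption. The argument must therefore rely only on the general monotonicity of $[\cdot, \ldots, \cdot]$ in each argument together with the absorption property, both of which hold universally, and on the finiteness of $\Con(\A)$ to guarantee that the needed maximum exists. Setting up the centralizer carefully enough that both $\delta < {\mathbf 1}$ and the dichotomy follow simultaneously from the chosen definitions is the delicate bookkeeping that Lemma~3.3 of \cite{AM:SOCO} carries out.
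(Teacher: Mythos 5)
Your overall plan---contraposition, stabilizing $\rho_n:=[\mathbf{1},\dots,\mathbf{1}]$ ($n$ entries) at some $\rho_\infty>\mathbf{0}$, choosing $\eps$ with $\mathbf{0}<\eps\le\rho_\infty$, and extracting $\delta$ from a higher-commutator centralizer condition---is a reasonable way to aim at the result, but the dichotomy step does not go through as written, and the gap sits exactly where you defer to the cited lemma. You take $\delta$ \emph{maximal} with property (P): $[\delta,\mathbf{1},\dots,\mathbf{1}]\not\ge\eps$ for every arity. Maximality does give $\delta<\mathbf{1}$, since $\mathbf{1}$ fails (P) once $\rho_n\ge\eps$. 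But if $\alpha\not\le\delta$, maximality only tells you that $\alpha\vee\delta>\delta$ fails (P), so some $[\alpha\vee\delta,\mathbf{1},\dots,\mathbf{1}]\ge\eps$; absorption then yields $\eps\le\alpha\vee\delta$, not the needed $\eps\le\alpha$. To conclude that $\alpha$ itself fails (P) you would need the (P)-satisfying congruences to have a \emph{unique} maximal element---equivalently, to be closed under joins---and that is exactly what additivity (BC7/HC8) would supply. As you correctly observe, additivity of (higher) commutators is only available under a congruence-permutability hypothesis, which this proposition does not assume, so you cannot invoke it; monotonicity and absorption alone do not prevent several incomparable maximal (P)-elements.

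Swapping ``a maximal (P)-element'' for ``the join of all (P)-elements'' repairs the dichotomy (anything $\not\le\delta$ then fails (P), hence lies above $\eps$ by absorption), but now $\delta<\mathbf{1}$ becomes the unestablished step: the join of (P)-elements need not itself satisfy (P), and ruling out that $\mathbf{1}$ is such a join again calls for additivity or some join-primality of $\eps$ that you have not established. Either version of your construction therefore leaves a real hole at the key inference, and your closing remark that the ``delicate bookkeeping'' is carried out in the reference is an acknowledgement of the gap rather than a way around it; the tools you restrict yourself to (monotonicity, absorption, finiteness of $\Con(\A)$) are not sufficient to produce the splitting pair.
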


Following C.\ Bergman \cite{Be:UA}, the clone generated by a set of functions $F$ on a set
$A$ will be denoted by $\Clo^A (F)$ or $\Clo (F)$, and we write
$ar(f)$ for the  arity of an operation $f$.

\begin{pr}\cite[Proposition 6.18]{AM:SAOH}\label{pr6.18.AM:SAOH}
Let $k\in\N$. If $\A$ is a $k$-supernilpotent Mal'cev algebra, with a Mal'cev term $m$, then
$\Clo^A (\Pol_k (\ab{A} )\cup \{m\} ) = \mathrm{Pol}(\mathbf{A})$.
\end{pr}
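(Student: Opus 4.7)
The inclusion $\Clo^A(\Pol_k(\A) \cup \{m\}) \subseteq \Pol(\A)$ is immediate because every generator is a polynomial and $\Pol(\A)$ is closed under composition. The plan for the reverse inclusion is to prove by induction on the arity $n$ that every $p \in \Pol_n(\A)$ belongs to $\Clo^A(\Pol_k(\A) \cup \{m\})$. The base case $n \le k$ holds after padding with dummy variables.

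For the inductive step $n > k$, the strategy is to extract from $p$ a single polynomial of essential arity at most $k$ together with finitely many polynomials of arity at most $n-1$, and then to express $p$ as a Mal'cev composition of these pieces. Fix $a \in A$, set $c := p(a, \ldots, a)$, and define
\[ q_0 := p, \qquad q_{i+1}(\vec x) := m\bigl(q_i(\vec x),\, q_i(x_1, \ldots, x_i, a, x_{i+2}, \ldots, x_n),\, c\bigr) \quad (0 \le i \le n-1). \]
A direct induction using the Mal'cev identity $m(y,y,z) = z$ shows that $q_i$ is absorbing at each of the coordinates $1, \ldots, i$ with value $c$; consequently $q_n$ is absorbing at $(a, \ldots, a)$ with value $c$, and Proposition~\ref{Cor6.17.AM:SAOH} forces its essential arity to be at most $k$. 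After dropping dummy arguments, $q_n \in \Clo^A(\Pol_k(\A))$.

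The final step is to reassemble $p$ from $q_n$ together with the ``face'' polynomials $r_i(\vec x) := q_i(x_1, \ldots, x_i, a, x_{i+2}, \ldots, x_n)$; each $r_i$ has at most $n-1$ essential arguments, so by the inductive hypothesis each $r_i$ lies in $\Clo^A(\Pol_k(\A) \cup \{m\})$. The main obstacle lies in inverting the recursion $q_{i+1} = m(q_i, r_i, c)$: one must supply a Mal'cev-term expression that recovers $q_i$ from $q_{i+1}$, $r_i$, and $c$. In an Abelian Mal'cev algebra the identity $m(m(x,y,z),z,y) = x$ delivers this at once, but in a general Mal'cev algebra the identity need not hold and the discrepancy must be controlled via a higher-commutator correction. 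To handle this I would invoke Proposition~\ref{Lema7.6.AM:SAOH} to decompose $\A$ as a direct product of prime-power factors, on each of which the supernilpotence lets one select a Mal'cev term (coming, for instance, from an expanded-loop presentation) that satisfies the required inversion identity on the polynomial values at hand; the local inversions are then reassembled into a global expression for $p$ using the direct product decomposition. Combining the resulting formula for $q_i$ with the inductive expressions for the $r_i$ yields $p \in \Clo^A(\Pol_k(\A) \cup \{m\})$, completing the induction.
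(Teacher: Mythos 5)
The paper does not prove this proposition itself---it cites it from \cite{AM:SAOH}---so there is no in-text proof to compare against; I will evaluate your argument on its own terms.

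Your set-up is sound and matches the standard strategy: the recursion $q_{i+1}(\vec x) := m\bigl(q_i(\vec x),\, q_i(x_1,\ldots,x_i,a,x_{i+2},\ldots,x_n),\, c\bigr)$ does produce a polynomial $q_n$ that is absorbing at $(a,\ldots,a)$ with value $c$, and Proposition~\ref{Cor6.17.AM:SAOH} bounds its essential arity by $k$. (In fact you miss a useful extra fact: since $k<n$, $q_n$ has a dummy coordinate, and an absorbing polynomial with a dummy coordinate is constantly equal to $c$. That is what ultimately makes reassembly feasible.) You also correctly observe that each ``face'' $r_i(\vec x):=q_i(x_1,\ldots,x_i,a,x_{i+2},\ldots,x_n)$ has fewer than $n$ essential variables and so falls under the inductive hypothesis.

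The genuine gap is exactly where you flag it: inverting $q_{i+1}=m(q_i,r_i,c)$. You need, for each fixed $\vec x$, a composition in $m$ that recovers $q_i(\vec x)$ from $q_{i+1}(\vec x)$, $r_i(\vec x)$ and $c$; the natural candidate $m\bigl(m(u,v,c),c,v\bigr)=u$ is not an identity of Mal'cev algebras, and the proposition has to hold for the \emph{given} Mal'cev term $m$, not a conveniently chosen one. The fix you sketch does not close this gap. Proposition~\ref{Lema7.6.AM:SAOH} requires the algebra to be finite and of finite type, neither of which is a hypothesis of the statement at hand. Even granting finiteness, decomposing $\A$ into prime-power factors and ``selecting a Mal'cev term from an expanded-loop presentation'' on each factor is not a justified step: there is no argument that such a term exists with the required inversion identity, and even if each factor admitted some well-behaved Mal'cev term $m'_j$, you would obtain an expression for $p$ in $\Clo^A(\Pol_k(\A)\cup\{m'_1,\ldots\})$ rather than in $\Clo^A(\Pol_k(\A)\cup\{m\})$. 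Finally, ``reassembling the local inversions into a global expression'' is asserted rather than carried out. As written, the proof of the reverse inclusion is incomplete; the inversion step needs an actual argument (for instance, one exploiting nilpotence or a forward-direction identity whose failure is itself an absorbing polynomial of large essential arity), not the appeal to a direct product decomposition.
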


In investigating clone generation,
we find it useful to use the approach to clones that
goes back to A.I.Mal'cev, and is given in
\cite[p.38]{PK:FUR}. We recall that for a set $A$, on the set
$P_A=\bigcup_{n \in \N} A^{A^n}$, one defines the operations
$\zeta,\tau,\Delta,\nabla, \circ$ such that for $f,g\in P_A$,
$ar(f)=n, ar(g)=m$, we have that $ar(\zeta f)=n, ar(\tau f)=n,
ar(\Delta f)=n-1, ar(g\circ f)=n+m-1$, $ar(\nabla f)=n+1$, and
\begin{enumerate}
\item $(\zeta f)(x_1,x_2,\dots, x_n):= f(x_2,\dots, x_n, x_1),$
\item $(\tau f)(x_1,x_2,x_3,\dots, x_n):= f(x_2,x_1,x_3,\dots, x_n),$
\item  $(\Delta f)(x_1,x_2,\dots, x_{n-1}):= f(x_1, x_1,x_2,\dots, x_{n-1}),$
\end{enumerate}
for $n\geq 2$, and $\zeta f = \tau f = \Delta f = f$ for $n=1$,
\begin{enumerate}
\setcounter{enumi}{3}
\item $(\nabla f)(x_1,x_2,\dots, x_{n+1}):= f(x_2,\dots, x_{n+1}),$
\item $(g\circ f)(x_1,\dots, x_{m+n-1}):= f(g(x_1,\dots, x_m),x_{m+1},\dots, x_{m+n-1})$
\end{enumerate}
for all $x_1,\dots,x_{m+n-1}\in A.$
The function $\id_A \in A^{A}$ is the identity function on $A$.
Following \cite{PK:FUR}, we call
$\algop{P_A}{\id_A, \zeta, \tau, \Delta, \nabla, \circ}$ the full \emph{function algebra}
on $A$. The subuniverses of the full function algebra are exactly the clones on $A$.
For a nonempty set $F$ of finitary operations on $A$,
the \emph{function algebra generated by $F$}
is the subalgebra of $\algop{P_A}{\id_A, \zeta, \tau, \Delta, \nabla, \circ}$
generated by $F$, and its universe is $\Clo^A (F)$.
 This approach allows to use the concepts of classical universal algebra
 (cf. \cite[Bemerkungen~1.1.3~(v)]{PK:FUR}) for the generation of clones:
\begin{lem} \label{lem:clohom}
    Let $A,B$ be sets, let $C$ be a clone on $A$, let $D$ be a clone on $B$,
    let $\varphi$ be a homomorphism from the function algebra
    $\algop{C}{\id_A, \zeta, \tau, \Delta, \nabla, \circ}$ into
    $\algop{D}{\id_B, \zeta, \tau, \Delta, \nabla, \circ}$, and let
    $f_1,\ldots, f_n$ be finitary operations on $A$.
    Then we have:
    \begin{enumerate}
        \item
         If
         $\Clo^A (f_1,\ldots, f_n) = C$, then
         $\Clo^B (\varphi (f_1), \ldots, \varphi (f_n)) =
           \varphi(C)$.
       \item If $\Clo^B (\varphi (f_1), \ldots, \varphi (f_n)) = D$,
          then $\varphi (\Clo^A (f_1,\ldots, f_n)) = D$.
    \end{enumerate}
\end{lem}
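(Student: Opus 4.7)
The plan is to reduce the entire lemma to a single standard fact of universal algebra: for a homomorphism of algebras of a common signature and any subset $S$ of its domain, the image of the subuniverse generated by $S$ equals the subuniverse generated by the image of $S$. Applied to our $\varphi$, which is a homomorphism between $\algop{C}{\id_A, \zeta, \tau, \Delta, \nabla, \circ}$ and $\algop{D}{\id_B, \zeta, \tau, \Delta, \nabla, \circ}$, and whose subuniverses are precisely the subclones of $C$ and of $D$, respectively, and to $S = \{f_1, \ldots, f_n\}$ (which sits inside $C$, because $\Clo^A(f_1, \ldots, f_n) \subseteq C$, as $C$ is itself a subclone containing the $f_i$), the principle yields the master identity
$$\varphi(\Clo^A(f_1, \ldots, f_n)) \;=\; \Clo^B(\varphi(f_1), \ldots, \varphi(f_n)).$$

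To establish the principle itself I would use two one-line inclusion arguments. For $\subseteq$, the preimage $\varphi^{-1}(\langle \varphi(S)\rangle)$ is a subuniverse of the domain that contains $S$, hence contains $\langle S\rangle$, so that $\varphi(\langle S\rangle) \subseteq \langle \varphi(S)\rangle$. Conversely, $\varphi(\langle S\rangle)$ is itself a subuniverse of the codomain (the homomorphic image of a subuniverse) that contains $\varphi(S)$, hence contains $\langle \varphi(S)\rangle$. Granted the master identity, both items of the lemma drop out by substitution: for (1), the hypothesis $\Clo^A(f_1, \ldots, f_n) = C$ rewrites the left side as $\varphi(C)$, giving $\Clo^B(\varphi(f_1), \ldots, \varphi(f_n)) = \varphi(C)$; for (2), the hypothesis $\Clo^B(\varphi(f_1), \ldots, \varphi(f_n)) = D$ rewrites the right side as $D$, giving $\varphi(\Clo^A(f_1, \ldots, f_n)) = D$.

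I do not foresee any real obstacle here, and deliberately so: the whole motivation for adopting the Mal'cev-style function-algebra viewpoint on clones is precisely that statements about generation commuting with clone homomorphisms become immediate instances of the corresponding universal-algebraic fact, with no further combinatorial work. The only minor bookkeeping is the routine observation that $\Clo^A(f_1, \ldots, f_n) \subseteq C$, which is what allows $\varphi$ to be evaluated on that subclone in the first place.
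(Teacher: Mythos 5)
Your proof is correct and follows essentially the same route as the paper: the paper's one-line proof simply cites the universal-algebraic fact that a homomorphism commutes with the subuniverse-generation operator (Theorem II.6.6 of Burris--Sankappanavar), which is exactly the ``master identity'' you establish and apply. The only difference is cosmetic -- you spell out the two-inclusion proof of that fact rather than citing it.
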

 \emph{Proof:} Both properties are consequences of~\cite[Theorem~II.6.6]{BS:ACIU}. \qed

\section{Generation of congruence preserving functions}

For similar algebras $\ab{A}$, $\ab{B}$ and a homomorphism $h : \ab{A} \to \ab{B}$,
the connections between the congruence preserving functions of $\ab{A}$, $\ab{B}$,
$\ab{A} \times \ab{B}$, and $h (\ab{A})$ are less obvious than one might wish.
Some of these connections are collected in this section.
\begin{pro} \label{pro:dp2}
   Let $n \in \N$, and let $\ab{A}$ and  $\ab{B}$ be similar algebras. We assume that the direct
   product $\ab{A} \times \ab{B}$ is skew-free.
   Then the clone $\Comp (\ab{A} \times \ab{B})$ is finitely generated if and only
            if both clones $\Comp (\ab{A})$ and $\Comp (\ab{B})$ are finitely generated.
\end{pro}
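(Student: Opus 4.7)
The plan is to prove both implications separately, using Proposition~\ref{pro:hfg} to decompose compatible functions on the product coordinate-wise, and Lemma~\ref{lem:clohom} to transport finite generation along homomorphisms of function algebras.

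For the forward direction, I would argue that the maps $\phi_{\ab{A}}: \Comp(\ab{A}\times\ab{B}) \to \Comp(\ab{A})$ and $\phi_{\ab{B}}: \Comp(\ab{A}\times\ab{B}) \to \Comp(\ab{B})$ from Proposition~\ref{pro:hfg} are surjective homomorphisms of function algebras in the sense of Lemma~\ref{lem:clohom}. Surjectivity follows from Proposition~\ref{pro:dp1}\eqref{it:d1}: given $f \in \Comp_n(\ab{A})$, the function $((\vb{a},\vb{b})^T) \mapsto (f(\vb{a}), b_1)$ lies in $\Comp_n(\ab{A}\times\ab{B})$ and is mapped to $f$. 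The homomorphism property for $\zeta,\tau,\Delta,\nabla$ is immediate since these operations act coordinate-wise on paired functions; for $\circ$, a short computation shows that if $h=(f,g)$ and $h'=(f',g')$ are paired, then $h\circ h' = (f\circ f', g\circ g')$. Applying Lemma~\ref{lem:clohom}(1) to each of $\phi_{\ab{A}}$ and $\phi_{\ab{B}}$ then converts any finite generating set of $\Comp(\ab{A}\times\ab{B})$ into finite generating sets of $\Comp(\ab{A})$ and $\Comp(\ab{B})$.

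For the backward direction, suppose $\Comp(\ab{A}) = \Clo^A(f_1,\ldots,f_k)$ and $\Comp(\ab{B}) = \Clo^B(g_1,\ldots,g_l)$. Define lifts $\hat{f}_i := ((\vb{a},\vb{b})^T \mapsto (f_i(\vb{a}), b_1))$ and $\hat{g}_j := ((\vb{a},\vb{b})^T \mapsto (a_1, g_j(\vb{b})))$; these are compatible by Proposition~\ref{pro:dp1}\eqref{it:d1}, and the exchange function $e$ is compatible by Proposition~\ref{pro:dp1}\eqref{it:d2}. I claim that $F := \{\hat{f}_1,\ldots,\hat{f}_k\} \cup \{\hat{g}_1,\ldots,\hat{g}_l\} \cup \{e\}$ generates $\Comp(\ab{A}\times\ab{B})$. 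Indeed, because projections on $\ab{A}\times\ab{B}$ are paired projections and the clone operations preserve the ``paired'' form (the first coordinate of any term built from the $\hat{f}_i$ is the corresponding term in $f_1,\ldots,f_k$, and the second coordinate is some projection), the clone term expressing an arbitrary $f \in \Comp(\ab{A})$ in the generators $f_1,\ldots,f_k$ lifts to a clone term over $\{\hat{f}_i\}$ yielding a function $h_1 = ((\vb{a},\vb{b})^T \mapsto (f(\vb{a}), b_{i_0}))$ for some index $i_0$; symmetrically one obtains $h_2 = ((\vb{a},\vb{b})^T \mapsto (a_{j_0}, g(\vb{b})))$. Given any $h \in \Comp_n(\ab{A}\times\ab{B})$, write $h = (f,g)$ via Proposition~\ref{pro:hfg}; then the composition $e(h_1((\vb{x},\vb{y})^T), h_2((\vb{x},\vb{y})^T)) = (f(\vb{x}), g(\vb{y})) = h((\vb{x},\vb{y})^T)$, so $h \in \Clo^{A\times B}(F)$.

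The main obstacle, though routine, is the bookkeeping required to verify that $\phi_{\ab{A}}$ and $\phi_{\ab{B}}$ are genuine function algebra homomorphisms (closing under all of $\id,\zeta,\tau,\Delta,\nabla,\circ$), and that the set of ``paired'' functions of the form $(f, p)$ with $p$ a projection on $\ab{B}$ is closed under all these clone operations; these verifications are direct coordinate-wise computations but must be done carefully to invoke Lemma~\ref{lem:clohom} cleanly and to be sure that arities match when combining $h_1$, $h_2$ via $e$.
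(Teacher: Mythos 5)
Your proposal is correct and follows essentially the same strategy as the paper: surjectivity of the coordinate projections $\phi_{\ab{A}},\phi_{\ab{B}}$ plus the function-algebra homomorphism machinery (Lemma~\ref{lem:clohom}) for the forward direction, and lifted generators combined with the exchange function $e$ for the backward direction. The only cosmetic difference is that you explicitly track the ``spare'' coordinate of the lifted terms as a projection, whereas the paper merely invokes Lemma~\ref{lem:clohom} to produce $F$ with $\phi_{\ab{A}}(F)=\phi_{\ab{A}}(h)$ and does not care what the second coordinate of $F$ is.
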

  \emph{Proof:}
     For the ``if''-direction, let $\Comp (\ab{A}) =
    \Clo^A (f_1,\ldots, f_k)$ and $\Comp (\ab{B})$  $=$  $\Clo^B (g_1, \ldots, g_l)$,
     and let $e$ be the congruence preserving function produced in Proposition~\ref{pro:dp1}~\eqref{it:d2}.
     For $n \in \N$ and a function $f \in \Comp_n (\ab{A})$,
     let $f' : (A \times B)^n \to A \times B$ be defined by
         $f' ( (\vb{a}, \vb{b})^T ) := (f (\vb{a}), b_1)$ for all $\vb{a} = (a_1,\ldots, a_n) \in A^n$ and $\vb{b} = (b_1,\ldots,b_n)
      \in B^n$ ; similarly,
     for $g \in \Comp_n (\ab{B})$, let
       $g'' ( (\vb{a}, \vb{b})^T ) := (a_1, g (\vb{b}))$.
     The functions $f'$ and $g''$ lie in $\Comp (\ab{A} \times \ab{B})$ because of Proposition~\ref{pro:dp1}~\eqref{it:d1}.
     We claim that $\{ f_1', \ldots, f_k', g_1'', \ldots, g_l'', e \}$ generates
     $\Comp ( \ab{A} \times \ab{B})$.
     To prove this, we observe that
     the mapping
     $\phi_{\ab{A}} : \Comp (\ab{A} \times \ab{B}) \to \Comp ( \ab{A} )$
     that was introduced after the proof of Proposition~\ref{pro:hfg}
     is a homomorphism from the function
     algebra $(\Comp (\ab{A} \times \ab{B}), \id_{A \times B}, \zeta, \tau, \Delta, \nabla, \circ)$
       into  $(\Comp (\ab{A}), \id_{A}, \zeta, \tau, \Delta, \nabla, \circ)$.
     Similarly, $\phi_{\ab{B}}$ maps
      $\Comp (\ab{A} \times \ab{B})$ into $\Comp (\ab{B})$.
     Now let $h \in \Comp_n (\ab{A} \times \ab{B})$. By Proposition~\ref{pro:dp1},
      $\phi_{\ab{A}} (h) \in \Comp_n (\ab{A})$ and $\phi_{\ab{B}} (h) \in \Comp_n (\ab{B})$.
      Since $\phi_{\ab{A}} (f_i') = f_i$, the function $\phi_{\ab{A}} (h)$ lies in the clone on $\ab{A}$ generated
      by $\{\phi_{\ab{A}} (f_1'), \ldots, \phi_{\ab{A}} (f_k') \}$. By Lemma~\ref{lem:clohom}, there is an
      $F \in
      \Clo^{A \times B} ( \{f_1', \ldots, f_k' \} )$ such that $\phi_{\ab{A}} (F) = \phi_{\ab{A}} (h)$.
      Similarly, there is $G \in \Clo^{A \times B} ( \{g_1', \ldots, g_l' \} )$ such that $\phi_{\ab{B}} (G) =
      \phi_{\ab{B}} (h)$.
            Then $e (F, G) \in
        \Clo^{A \times B} ( \{ f_1', \ldots, f_k', g_1'', \ldots, g_l'', e \} )$.
       We will now show $e(F,G) = h$. To this end, let $\vb{a} \in A^n$ and $\vb{b} \in B^n$.
       Then there are $x \in A$ and $y \in B$ such that
      $e(F ( (\vb{a}, \vb{b})^T), G ( (\vb{a}, \vb{b})^T)) =
       e( (\phi_{\ab{A}} (h) (\vb{a}), y), (x, \phi_{\ab{B}} (h) (\vb{b}))) =
          (\phi_{\ab{A}} (h) (\vb{a}), \phi_{\ab{B}} (h) (\vb{b})) = h ( (\vb{a}, \vb{b})^T)$.
       Thus $h \in \Clo^{A \times B} ( \{ f_1', \ldots, f_k', g_1'', \ldots, g_l'', e \} )$.

       For the ``only if''-direction, we assume that $\Comp (\ab{A} \times \ab{B})$ is finitely
       generated. By Proposition~\ref{pro:dp1}~\eqref{it:d1}, the mapping   $\phi_{\ab{A}}$ is surjective,
       and thus it is a function algebra epimorphism
       from $(\Comp (\ab{A} \times \ab{B}), \id_{A \times B}, \zeta, \tau, \Delta, \nabla, \circ)$ onto
       $(\Comp (\ab{A}), \id_A, \zeta, \tau, \Delta, \nabla, \circ)$. Since homomorphic
        images of finitely generated algebras are finitely generated, it follows
       that $\Comp (\ab{A})$ is finitely generated. Similarly, $\Comp (\ab{B})$ is
       finitely generated. \qed

     In Section~\ref{sec:abelian} we will examine abelian groups and
       see  that
     $\Comp (\Z_2)$, $\Comp (\Z_4)$ and $\Comp (\Z_4 \times \Z_4 \times \Z_2 \times \Z_2)$
     are all finitely generated, whereas $\Comp (\Z_4 \times \Z_2)$ is not finitely
     generated. This shows that none of the implications of Proposition~\ref{pro:dp2}
     holds if the assumption that the product is skew-free is omitted.

   Next, we will see how finite generation can be preserved under forming certain
   homomorphic images.
    \begin{pro} \label{pro:lift1}
        Let $\ab{A}$ be an algebra, and let $\alpha \in \Con (\ab{A})$ such that
        for every $f \in \Comp (\ab{A} / \alpha)$, there exists an
        $\tilde{f} \in \Comp (\ab{A})$ with $\tilde{f}^{\ab{A} / \alpha} = f$.
        Then if $\Comp (\ab{A})$ is finitely generated, then so is
        $\Comp (\ab{A}/\alpha)$.
    \end{pro}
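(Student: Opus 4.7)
The plan is to exhibit the map $\psi : \Comp(\ab{A}) \to \Comp(\ab{A}/\alpha)$ defined by $\psi(h) := h^{\ab{A}/\alpha}$ as a surjective function algebra homomorphism, and then invoke the standard universal algebraic fact that homomorphic images of finitely generated algebras are finitely generated.

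First, I would verify that $\psi$ is well-defined: the paragraph right after Proposition~\ref{pro:de} (``$f^{\A/\alpha}$ given by $f^{\A/\alpha}(a_1/\alpha,\ldots,a_n/\alpha):=f(a_1,\ldots,a_n)/\alpha$ is well-defined and belongs to $\Comp_n(\ab{A}/\alpha)$'') states exactly this. Next, $\psi$ respects arities, sends $\id_A$ to $\id_{A/\alpha}$, and commutes with the Mal'cev operations $\zeta,\tau,\Delta,\nabla,\circ$ on the function algebras, since each of these is defined pointwise and the definition of $f^{\ab{A}/\alpha}$ was also pointwise on representatives. So $\psi$ is a homomorphism from $\algop{\Comp(\ab{A})}{\id_A,\zeta,\tau,\Delta,\nabla,\circ}$ to $\algop{\Comp(\ab{A}/\alpha)}{\id_{A/\alpha},\zeta,\tau,\Delta,\nabla,\circ}$.

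The hypothesis on $\alpha$ is precisely the statement that $\psi$ is surjective: every $f \in \Comp(\ab{A}/\alpha)$ has a preimage $\tilde{f} \in \Comp(\ab{A})$. Now suppose $\Comp(\ab{A}) = \Clo^A(h_1,\ldots,h_k)$ for some finite set $\{h_1,\ldots,h_k\} \subseteq \Comp(\ab{A})$. Applying Lemma~\ref{lem:clohom}(1) to $\psi$ yields
\[
\Clo^{A/\alpha}(\psi(h_1),\ldots,\psi(h_k)) = \psi(\Comp(\ab{A})) = \Comp(\ab{A}/\alpha),
\]
which shows that $\Comp(\ab{A}/\alpha)$ is finitely generated by $\{h_1^{\ab{A}/\alpha},\ldots,h_k^{\ab{A}/\alpha}\}$.

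There is no genuine obstacle here; the content of the proposition is entirely absorbed in the observation that $h \mapsto h^{\ab{A}/\alpha}$ is a clone homomorphism, and the surjectivity hypothesis is tailored to make the image pass to the quotient. The only mildly fiddly step is writing down that $\psi$ commutes with the five function algebra operations, which is routine verification from their pointwise definitions.
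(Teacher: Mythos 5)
Your proof is correct and follows essentially the same approach as the paper: exhibit $\psi(h) := h^{\ab{A}/\alpha}$ as a surjective function algebra homomorphism and conclude that $\Comp(\ab{A}/\alpha)$ is a homomorphic image of a finitely generated algebra. The paper states the conclusion a bit more tersely (appealing directly to the fact that homomorphic images of finitely generated algebras are finitely generated) rather than routing through Lemma~\ref{lem:clohom}, but the argument is identical in substance.
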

    \emph{Proof:} The mapping $\psi: \Comp (\ab{A}) \to \Comp (\ab{A} / \alpha)$,
         $g \mapsto g^{\ab{A}/\alpha}$, is a function algebra homomorphism. By the assumptions,
         it is surjective, and thus $\Comp (\ab{A} / \alpha)$ is a homomorphic
        image of $\Comp (\ab{A})$. \qed
     \begin{pro} \label{pro:cut1}
          Let $\ab{A}$ be an algebra, let $\alpha$ be a congruence
          of $\ab{A}$ that cuts $\Con (\ab{A})$, and let $f \in \Comp (\ab{A}/\alpha)$.
          Let $R$ be a set of representatives of $A$ modulo $\alpha$,
         and for $a \in A$, let $r (a)$ be the unique element of $R$ with
         $(a, r(a)) \in \alpha$.
         Let $\tilde{f}:A^{ar(f)} \to A$ be defined by
         $\tilde{f} (\vb{x}) := r ( f (\vb{x} / \alpha ))$.
         Then $\tilde{f} \in \Comp (\ab{A})$.
     \end{pro}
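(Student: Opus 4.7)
The plan is to verify directly that $\tilde{f}$ preserves every congruence $\varphi\in\Con(\ab{A})$. The hypothesis that $\alpha$ cuts $\Con(\ab{A})$ gives exactly the dichotomy that makes this tractable: every $\varphi\in\Con(\ab{A})$ satisfies $\varphi\le\alpha$ or $\varphi\ge\alpha$. I will handle these two cases separately, and the argument for each is essentially a one-liner.

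First I would fix $\vb{x},\vb{y}\in A^{ar(f)}$ and $\varphi\in\Con(\ab{A})$ with $(x_i,y_i)\in\varphi$ for all $i$, with the goal $(\tilde f(\vb{x}),\tilde f(\vb{y}))\in\varphi$. In the case $\varphi\le\alpha$, componentwise $\varphi$-relatedness forces $x_i/\alpha=y_i/\alpha$ for each $i$, hence $\vb{x}/\alpha=\vb{y}/\alpha$ and $f(\vb{x}/\alpha)=f(\vb{y}/\alpha)$. Applying the representative selector $r$ to the same input on both sides yields $\tilde f(\vb{x})=\tilde f(\vb{y})$, which is trivially a $\varphi$-related pair by reflexivity.

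In the case $\varphi\ge\alpha$, the relation $\varphi/\alpha$ is a well-defined congruence of $\ab{A}/\alpha$ and contains all pairs $(x_i/\alpha,y_i/\alpha)$. Since $f\in\Comp(\ab{A}/\alpha)$, the pair $(f(\vb{x}/\alpha),f(\vb{y}/\alpha))$ lies in $\varphi/\alpha$; that is, any representative of the first class and any representative of the second class are $\varphi$-related, because $\alpha\le\varphi$ means we may freely replace elements of an $\alpha$-class by one another modulo $\varphi$. The elements $\tilde f(\vb{x})=r(f(\vb{x}/\alpha))$ and $\tilde f(\vb{y})=r(f(\vb{y}/\alpha))$ are such representatives, so $(\tilde f(\vb{x}),\tilde f(\vb{y}))\in\varphi$, as required.

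There is no real obstacle here; the only thing to be careful about is the slight notational abuse in writing $r(f(\vb{x}/\alpha))$, which should be interpreted as $r(c)$ for any $c\in A$ whose $\alpha$-class equals $f(\vb{x}/\alpha)$ (well-defined since $r$ is constant on each $\alpha$-class by construction). I considered instead invoking Proposition~\ref{pro:de} with $\delta=\eps=\alpha$, but that proposition requires $\tilde f$ to map into a single $\eps$-class, which is generally not the case here; the direct two-case verification above is cleaner and uses the cutting property in exactly the right place.
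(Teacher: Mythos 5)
Your proof is correct and follows essentially the same two-case argument as the paper: the cutting property gives the dichotomy $\varphi\le\alpha$ or $\varphi\ge\alpha$, the first case gives literal equality of outputs, and the second case uses that $f$ preserves $\varphi/\alpha$ and that $\alpha\le\varphi$ lets one pass from $\alpha$-classes to representatives. Your closing remark about why Proposition~\ref{pro:de} is not directly applicable is also accurate.
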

     \emph{Proof:}
         Let $\vb{x}, \vb{y} \in A^n$, and let $\beta \in \Con (\ab{A})$
         with $\vb{x} \equiv_{\beta} \vb{y}$. Since $\alpha$ is a cutting element,
         we have $\beta \le \alpha$ or $\beta \ge \alpha$. If $\beta \le \alpha$,
         then $\tilde {f} (\vb{x}) = \tilde{f} (\vb{y})$.
          If $\beta \ge \alpha$, then since $f$ is congruence preserving,
         $f ( \vb{x} / \alpha ) \equiv_{\beta/\alpha} f ( \vb{y} / \alpha )$.
         Therefore,  $r ( f (\vb{x} / \alpha)) / \alpha \equiv_{\beta/\alpha}
                      r ( f (\vb{y} / \alpha)) / \alpha$, and thus
          $(r ( f (\vb{x} / \alpha)), r (f (\vb{y} / \alpha))) \in \beta$,
          which
          yields $(\tilde{f} (\vb{x}), \tilde{f} (\vb{y})) \in \beta$. \qed

    Since the function $\tilde{f}$ constructed in Proposition~\ref{pro:cut1}
    satisfies $\tilde{f}^{\A / \alpha} = f$, the last two propositions put
    together yield:
   \begin{lem}  \label{lem:factor}
     Let $\ab{A}$ be an algebra, and let $\alpha$ be a congruence of $\ab{A}$ that
     cuts the lattice $\Con (\ab{A})$.
     If $\Comp (\ab{A})$ is finitely generated, then $\Comp (\ab{A}/\alpha)$ is finitely generated.
   \end{lem}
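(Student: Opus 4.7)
The plan is to verify directly that the hypothesis of Proposition~\ref{pro:lift1} holds for the congruence $\alpha$ under consideration, after which the conclusion is immediate. That is, I need to show that every $f \in \Comp (\ab{A}/\alpha)$ lifts to some $\tilde{f} \in \Comp (\ab{A})$ with $\tilde{f}^{\ab{A}/\alpha} = f$.

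First, fix an arbitrary $f \in \Comp (\ab{A}/\alpha)$. Because $\alpha$ is a cutting element of $\Con (\ab{A})$, Proposition~\ref{pro:cut1} applies: choosing any system of representatives $R$ of $A$ modulo $\alpha$ and the induced map $r: A \to R$, the function $\tilde{f}(\vb{x}) := r(f(\vb{x}/\alpha))$ belongs to $\Comp (\ab{A})$. I then check the compatibility of $\tilde{f}$ with the projection onto $\ab{A}/\alpha$: for any $\vb{x} \in A^{ar(f)}$, we have $\tilde{f}(\vb{x})/\alpha = r(f(\vb{x}/\alpha))/\alpha = f(\vb{x}/\alpha)$, so $\tilde{f}^{\ab{A}/\alpha} = f$. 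This is the content of the sentence the authors state just before Lemma~\ref{lem:factor}, and it is essentially a bookkeeping step.

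Having established the lifting property for all $f \in \Comp(\ab{A}/\alpha)$, Proposition~\ref{pro:lift1} now applies with this $\alpha$: the hypothesis is precisely that every congruence preserving function of the quotient admits such a lift, and we have just verified it. Combined with the assumption that $\Comp (\ab{A})$ is finitely generated, Proposition~\ref{pro:lift1} yields that $\Comp (\ab{A}/\alpha)$ is finitely generated.

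There is no real obstacle here: the lemma is a direct corollary of Propositions~\ref{pro:cut1} and~\ref{pro:lift1}, and the only substantive step is recognising that the $\tilde{f}$ constructed via representatives satisfies $\tilde{f}^{\ab{A}/\alpha} = f$, which is immediate from the definition $\tilde{f}(\vb{x}) = r(f(\vb{x}/\alpha))$ together with the fact that $r(a) \equiv a \pmod{\alpha}$ for every $a \in A$.
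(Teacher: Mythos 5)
Your proposal is correct and follows exactly the paper's own reasoning: the paper notes in the sentence preceding the lemma that the $\tilde{f}$ of Proposition~\ref{pro:cut1} satisfies $\tilde{f}^{\ab{A}/\alpha} = f$, and then invokes Proposition~\ref{pro:lift1}, which is precisely the two-step argument you give.
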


\section{Preliminaries from lattice theory}\label{sec:lattices}
In this section, we recall the notion of \emph{perspectivity} 
in lattice theory \cite{Gr:GLTS} and its relation to the 
commutator operation of an algebra.
When $|\IntI [a,b]|=2$, we write $a \prec b$ and say that $\IntI
[a,b]$ is a \emph{prime interval} and that an element $a$ \emph{is
covered by} an element $b$ or that $b$ \emph{covers} $a$.
We say that an interval $\IntI [a,b]$ \emph{transposes up} to the
interval $\IntI [c,d]$ (or $\IntI [c,d]$ \emph{transposes down} to
$\IntI [a,b]$) if $a=b\wedge c$ and $d=b\vee c$, and we write
$$\IntI [a,b]\nearrow \IntI [c,d] \mbox{ or } \IntI [c,d]\searrow
\IntI [a,b].$$ This is shown on the picture below. The relation
$\sim:=\nearrow\cup\searrow$ is called the \emph{perspectivity
relation}. The transitive closure of $\sim$ is called
\emph{projectivity} and it is denoted by $\leftrightsquigarrow$.

\begin{center}
\setlength{\unitlength}{0.06mm}
\begin{picture}(330,500)(-15,-65)
\put(100,200){\circle{30}} \put(100,0){\circle{30}}
\put(300,200){\circle{30}} \put(300,400){\circle{30}}
\put(111,211){\line(1,1){178}} \put(111,11){\line(1,1){178}}
\thicklines
\put(100,15){\line(0,1){170}} 
\put(300,215){\line(0,1){170}}
\put(70,200){\makebox(0,0)[r]{$b$}}
\put(330,200){\makebox(0,0)[l]{$c$}} 
\put(70, 0){\makebox(0,0)[r]{$b\wedge c = a$}}
\put(330,400){\makebox(0,0)[l]{$d=b\vee c$}}
\end{picture}
\end{center}

\begin{de}
   Let $\ob{L}$ be a lattice. Then
       $\ob{L}$ satisfies the condition $\AP$ (adjacent projectiveness)
     if for every $\alpha , \beta , \gamma \in \ob{L}$,
      with $\alpha \prec \beta$ and $\alpha \prec \gamma$ we have that $\IntI [\alpha, \beta] \leftrightsquigarrow
\IntI [\alpha, \gamma]$.
\end{de}
It is easy to see that every $\ob{M}_2$-free modular lattice has the property $\AP$.
In general, a finite modular lattice with $\AP$ need not be $\ob{M}_2$-free, but from
\cite[Proposition~4.1(1)]{Ai:TNOC} we obtain that the congruence lattice of a finite expanded group
has $\AP$ if and only if it is $\ob{M}_2$-free.  In Theorem~\ref{thm:marijana}, we will
see that finite modular lattices with $\AP$ can be nicely described geometrically.

In a finite lattice $\ob{L}$, an element $a\in \ob{L}$ is called \emph{meet irreducible} if $a<\bigwedge\{c\in \ob{L} \mid c>a\}$.
If $a$ is meet irreducible, we define $a^+:=\bigwedge\{c\in \ob{L} \mid c>a\}.$ Then $a\prec a^+.$ An element $b\in \ob{L}$ is called
\emph{join irreducible} if $b>\bigvee\{c\in \ob{L} \mid c<b\}.$ If $b$ is join irreducible, we define
$b^-:=\bigvee\{c\in \ob{L} \mid c<b\},$ and then $b^-\prec b.$ The following statement is a basic fact.

\begin{lem}\label{glupa}
Let $\ob{L}$ be a finite
lattice and let $a,b\in \ob{L}$ be such that $b\nleq a$. If $c\in \ob{L}$ is maximal with the property
$c\geq a$ and $c\ngeq b$, then $c$ is meet irreducible.
\end{lem}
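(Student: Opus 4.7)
The plan is to argue by contradiction: assume $c$ is not meet irreducible and derive that $c \ge b$, contradicting $c \ngeq b$.

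First I would handle a small edge case: note that $c \neq \mathbf{1}$, since $c \geq b$ would follow from $c = \mathbf{1}$, contradicting $c \ngeq b$. Hence the set $U := \{d \in \ob{L} \mid d > c\}$ is nonempty, and $\bigwedge U$ is well defined.

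The key step is to exploit the maximality of $c$ to control every element of $U$. For each $d \in U$, we have $d > c \geq a$, so $d \geq a$. Since $d$ is strictly above $c$, the maximality assumption on $c$ forces the second condition to fail at $d$, i.e., $d \geq b$. Therefore every element of $U$ is above $b$, and consequently $\bigwedge U \geq b$.

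If $c$ were not meet irreducible, then by definition $c = \bigwedge U$, so we would obtain $c \geq b$, contradicting our assumption $c \ngeq b$. Hence $c$ is meet irreducible. I do not anticipate any real obstacle; the statement is a direct unpacking of the definitions, and the only mildly subtle point is checking that $U$ is nonempty so that the infimum makes sense.
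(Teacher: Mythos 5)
Your proof is correct and takes essentially the same approach as the paper: suppose $c$ equals the meet of all elements strictly above it, use maximality of $c$ to conclude every such element is $\geq b$, and derive the contradiction $c\geq b$. The only difference is that you explicitly flag $c\neq\mathbf{1}$ so that $U$ is nonempty, a minor detail the paper's proof leaves implicit.
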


\begin{proof}
Seeking a contradiction, we suppose that $c=\bigwedge\{d\in \ob{L}\mid d>c\}.$ By the maximality of $c$, every element $d$ with
$d>c$ satisfies $d\geq b$. Thus $c\geq b$, which is
a contradiction.
\end{proof}

The next two lemmas are restatements of \cite[p.35, Remarks 4.6]{FM:CTFC}.
\begin{lem}\label{lemica1}
Let $\ob{L}$ be a congruence lattice of a Mal'cev algebra
$\ab A$ and let $\alpha,\beta,\gamma,\delta\in \ob{L}$ be such that $\IntI [\alpha,\beta]\leftrightsquigarrow \IntI [\gamma,\delta]$. Then for all $\varepsilon\in \ob{L}$ we have that
$[\varepsilon,\beta]\leq \alpha \mbox{ if and only if }[\varepsilon,\delta]\leq \gamma$.
\end{lem}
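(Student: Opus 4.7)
The plan is to reduce the statement to a single perspectivity step by induction on the length of the projectivity chain $\leftrightsquigarrow$, and then prove each single step using only the three commutator properties listed in the preliminaries: absorption (BC1), monotonicity (BC2), and join-distributivity (BC7). Note that Mal'cev algebras generate congruence permutable varieties, so all three are available.

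Since $\leftrightsquigarrow$ is the transitive closure of $\sim = \nearrow \cup \searrow$, I would first observe that it suffices to prove the biconditional under the weaker assumption $\IntI[\alpha,\beta] \sim \IntI[\gamma,\delta]$, and then iterate. This splits into two cases.

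In the first case, assume $\IntI[\alpha,\beta]\nearrow\IntI[\gamma,\delta]$, so $\alpha = \beta\wedge\gamma$ and $\delta = \beta\vee\gamma$. For the forward direction, starting from $[\varepsilon,\beta]\le\alpha$, I would compute
\[
[\varepsilon,\delta] \;=\; [\varepsilon,\beta\vee\gamma] \;=\; [\varepsilon,\beta]\vee[\varepsilon,\gamma] \;\le\; \alpha\vee\gamma \;=\; (\beta\wedge\gamma)\vee\gamma \;=\; \gamma,
\]
using (BC7) for the second equality and (BC1) (applied to $[\varepsilon,\gamma]\le\gamma$) together with the hypothesis for the inequality. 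For the converse, from $[\varepsilon,\delta]\le\gamma$ and $\beta\le\delta$, (BC2) gives $[\varepsilon,\beta]\le[\varepsilon,\delta]\le\gamma$, while (BC1) gives $[\varepsilon,\beta]\le\beta$; together these force $[\varepsilon,\beta]\le\beta\wedge\gamma = \alpha$.

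In the second case, $\IntI[\alpha,\beta]\searrow\IntI[\gamma,\delta]$ means $\gamma = \delta\wedge\alpha$ and $\beta = \delta\vee\alpha$, and the argument is entirely symmetric: one uses (BC2) and (BC1) for the direction that needs to shrink $\delta$ down to $\beta$, and uses (BC7) together with (BC1) for the direction where $[\varepsilon,\beta] = [\varepsilon,\delta\vee\alpha] = [\varepsilon,\delta]\vee[\varepsilon,\alpha] \le \gamma\vee\alpha = \alpha$. There is no genuine obstacle here beyond bookkeeping; the only real ingredients are the three commutator identities, and the key algebraic step in each direction is the small calculation $(\beta\wedge\gamma)\vee\gamma = \gamma$ (respectively $(\delta\wedge\alpha)\vee\alpha = \alpha$), which is just absorption in the lattice.
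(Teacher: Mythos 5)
Your proof is correct and follows essentially the same approach as the paper: reduce to a single perspectivity step and apply (BC1), (BC2), and the join-distributivity (BC7) exactly as the paper does. The only cosmetic difference is that you spell out the $\searrow$ case separately, whereas the paper handles only $\nearrow$ and lets the $\searrow$ case follow because the biconditional is symmetric in $(\alpha,\beta)$ and $(\gamma,\delta)$.
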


\begin{proof}
First, let us suppose that these intervals are perspective, for example, $\IntI [\alpha,\beta]\nearrow \IntI [\gamma,\delta]$.
If $[\varepsilon,\beta]\leq \alpha$, then $[\varepsilon,\delta]= [\varepsilon,\beta\lor\gamma]=[\varepsilon,\beta]\lor[\varepsilon,\gamma]\leq \alpha\lor \gamma=\gamma.$
Next, if $[\varepsilon,\delta]\leq \gamma$, then $[\varepsilon,\beta]\leq [\varepsilon,\delta]\leq \gamma$ and $[\varepsilon,\beta]\leq \beta$. Thus, $[\varepsilon,\beta]\leq \gamma\land \beta=\alpha.$
The result now follows from the fact that projectivity is the transitive closure of perspectivity.
\end{proof}

\begin{lem}\label{lemica2}
Let $\ob{L}$ be a congruence lattice of a Mal'cev algebra
$\ab A$ and let $\alpha,\beta,\gamma,\delta\in \ob{L}$ be such that $\IntI [\alpha,\beta]\leftrightsquigarrow \IntI [\gamma,\delta]$. Then
$[\beta,\beta]\leq \alpha\mbox{ if and only if }[\delta,\delta]\leq \gamma$.
\end{lem}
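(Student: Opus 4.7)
The plan is to reduce to the case of a single perspective step and then exploit the additivity and monotonicity of the commutator. Since projectivity $\leftrightsquigarrow$ is the transitive closure of the perspectivity relation $\sim$, and since the statement is symmetric in the pairs $(\alpha,\beta)$ and $(\gamma,\delta)$, it suffices to prove the claim under the assumption $\IntI[\alpha,\beta] \nearrow \IntI[\gamma,\delta]$, that is, $\alpha = \beta \wedge \gamma$ and $\delta = \beta \vee \gamma$. Note that the Mal'cev assumption gives congruence permutability, so (BC7) applies.

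For the forward direction, suppose $[\beta,\beta] \le \alpha$. Using (BC7) twice, we expand
\[
   [\delta,\delta] = [\beta \vee \gamma,\; \beta \vee \gamma]
     = [\beta,\beta] \vee [\beta,\gamma] \vee [\gamma,\beta] \vee [\gamma,\gamma].
\]
The hypothesis gives $[\beta,\beta] \le \alpha \le \gamma$, and (BC1) gives $[\beta,\gamma], [\gamma,\beta], [\gamma,\gamma] \le \gamma$. Hence each join-summand lies below $\gamma$, whence $[\delta,\delta] \le \gamma$.

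For the reverse direction, suppose $[\delta,\delta] \le \gamma$. Since $\beta \le \beta \vee \gamma = \delta$, monotonicity (BC2) yields $[\beta,\beta] \le [\delta,\delta] \le \gamma$, while (BC1) gives $[\beta,\beta] \le \beta$. Combining, $[\beta,\beta] \le \beta \wedge \gamma = \alpha$.

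This handles a single perspective step, and the general case of projectivity follows at once by induction along the chain of perspectivities connecting $\IntI[\alpha,\beta]$ with $\IntI[\gamma,\delta]$. There is no real obstacle here; the entire argument is a routine application of (BC1), (BC2), and (BC7), and mirrors the proof strategy already used in Lemma~\ref{lemica1}.
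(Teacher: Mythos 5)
Your proof is correct and follows essentially the same route as the paper: reduce to a single perspective step via transitivity, handle the forward direction by expanding $[\beta\vee\gamma,\beta\vee\gamma]$ with (BC7), and the reverse direction by monotonicity and (BC1). The only cosmetic difference is that the paper compresses the forward expansion into the inequality $[\gamma\vee\beta,\gamma\vee\beta]\leq\gamma\vee[\beta,\beta]$, whereas you write out the four join-summands explicitly.
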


\begin{proof}
As in the proof of the previous lemma, it is enough to consider
perspective intervals.
Let $\IntI [\alpha,\beta]\nearrow \IntI [\gamma,\delta]$. If $[\beta,\beta]\leq \alpha$, then $[\delta,\delta]=[\gamma\lor\beta,\gamma\lor\beta]\leq\gamma\lor [\beta,\beta]\leq\gamma\lor\alpha=\gamma.$ Next, if $[\delta,\delta]\leq \gamma$, then $[\beta,\beta]\leq[\delta,\delta]\leq \gamma$ and $[\beta,\beta]\leq\beta$. Thus, $[\beta,\beta]\leq \gamma\land\beta=\alpha$.
\end{proof}

Each congruence of a modular lattice of finite height is completely determined
by the projectivity classes of prime intervals that it collapses. In particular,
we have:
\begin{pr}(cf. \cite[Theorem 4.2]{F:ICRO})\label{prosta.mreza}
A modular lattice of finite height is simple if and only if every two prime intervals are projective.
\end{pr}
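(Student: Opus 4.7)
The plan is to prove each direction separately, using as a central tool the stability of congruences under perspectivity: if $\theta \in \Con(\ob{L})$ satisfies $(p,q) \in \theta$ and $[p,q] \sim [r,s]$, then $(r,s) \in \theta$. Indeed, if $[p,q] \nearrow [r,s]$ so that $p = q \wedge r$ and $s = q \vee r$, then joining $(p,q) \in \theta$ with $r$ gives $(p \vee r, q \vee r) = (r, s) \in \theta$; the case $[p,q] \searrow [r,s]$ is dual, by meeting with $s$. By transitivity of projectivity, any congruence that collapses a prime interval collapses every prime interval projective to it.

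For the easy direction ($\Leftarrow$), let $\theta$ be a nontrivial congruence of $\ob{L}$ and pick $x < y$ with $(x,y) \in \theta$. Choose $z$ with $x \prec z \le y$; meeting $(x,y)$ with $z$ yields $(x, z) \in \theta$, so $\theta$ collapses the prime interval $[x, z]$. By hypothesis every prime interval is projective to $[x, z]$, hence is collapsed by $\theta$. For arbitrary $u \le v$ in $\ob{L}$, taking a maximal chain $u = w_0 \prec w_1 \prec \cdots \prec w_k = v$ and applying transitivity of $\theta$ to the consecutive pairs $(w_i, w_{i+1})$ gives $(u,v) \in \theta$. Hence $\theta = \mathbf{1}$ and $\ob{L}$ is simple.

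For the hard direction ($\Rightarrow$), fix prime intervals $[a,b]$ and $[c,d]$; the strategy is to build a nontrivial congruence ``labelled by $[a,b]$'' and then exploit simplicity. Define a relation $\theta_{[a,b]}$ on $\ob{L}$ by declaring $x \, \theta_{[a,b]} \, y$ iff either $x = y$, or every prime interval in some maximal chain from $x \wedge y$ to $x \vee y$ is projective to $[a,b]$. Granting that $\theta_{[a,b]}$ is a well-defined congruence, it is nontrivial (since $(a,b) \in \theta_{[a,b]}$), so by simplicity $\theta_{[a,b]} = \mathbf{1}$; in particular $(c,d) \in \theta_{[a,b]}$, and because $[c,d]$ is already a prime interval, this says precisely $[c,d] \leftrightsquigarrow [a,b]$.

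The main obstacle is establishing that $\theta_{[a,b]}$ really is a well-defined congruence, and this is where modularity and finite height are essential. Well-definedness rests on the following Jordan--H\"older-type fact: any two maximal chains between the same pair of endpoints in a modular lattice of finite height have the same length and admit a bijection between their prime intervals under which corresponding intervals are projective. Given this, compatibility of $\theta_{[a,b]}$ with $\vee$ and $\wedge$ follows by taking a maximal chain $x \wedge y = z_0 \prec \cdots \prec z_n = x \vee y$ witnessing $x \, \theta_{[a,b]} \, y$, joining or meeting it with a fixed element $w$, and observing that after removing repetitions one obtains maximal chains between $(x \vee w) \wedge (y \vee w)$ and $(x \vee w) \vee (y \vee w)$ (respectively the analogous meet endpoints) whose prime intervals are perspective to prime intervals of the original chain; the projectivity labels then transfer by the key perspectivity lemma from Section~\ref{sec:lattices}. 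Everything else reduces to routine bookkeeping with the perspectivity relation.
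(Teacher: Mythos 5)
The paper does not give its own proof of this proposition; it is cited from Finkbeiner (\cite[Theorem 4.2]{F:ICRO}). So I assess your proof on its own terms.

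Your $\Leftarrow$ direction is complete and correct. Your $\Rightarrow$ direction has the right plan --- build the explicit congruence $\theta_{[a,b]}$ of elements separated only by prime intervals projective to $[a,b]$, then use simplicity --- but there are two issues in the part you dismiss as routine. First, a minor slip: when you join a maximal chain in $\IntI[x\wedge y, x\vee y]$ with $w$, the resulting chain runs from $(x\wedge y)\vee w$ to $(x\vee y)\vee w$, and in a merely modular lattice $(x\wedge y)\vee w$ may be strictly below $(x\vee w)\wedge(y\vee w)$. So you get a maximal chain for a possibly \emph{larger} interval; this is harmless because a maximal chain through a subinterval inherits the projectivity property, but the claim as stated is not accurate and should be corrected. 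Second, and more substantively, transitivity of $\theta_{[a,b]}$ on arbitrary (not necessarily comparable) pairs is the genuinely delicate point and is nowhere addressed. From the definition one readily gets: (i) restriction to subintervals, (ii) transitivity along chains $u\le w\le v$, and (iii) one-sided substitution $x\theta y\Rightarrow (x\vee t)\,\theta\,(y\vee t)$ and dually; but deducing $x\theta y,\ y\theta z\Rightarrow x\theta z$ from these requires a specific chain of applications (e.g.\ showing $z\,\theta\,(x\vee z)$ by joining $y\,\theta\,(x\vee y)$ with $z$ and descending, and dually $(x\wedge z)\,\theta\, z$, then concatenating). This is a standard ``technical lemma'' characterizing lattice congruences via conditions on comparable pairs (it appears in Gr\"atzer's book), but it is not bookkeeping of the same flavour as Jordan--H\"older; you should either prove it or cite it explicitly. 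With that repaired, the argument goes through: $\theta_{[a,b]}$ is a nontrivial congruence, simplicity forces $\theta_{[a,b]}=\mathbf 1$, and since $[c,d]$ is a prime interval, $(c,d)\in\theta_{[a,b]}$ is literally the statement $[c,d]\leftrightsquigarrow[a,b]$.
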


\begin{lem}\label{abelian}
Let $\A$ be a finite Mal'cev algebra and let $\ob{L}$ be the congruence lattice of $\A$.
If $\ob{L}$ is simple and $|\ob{L}|\geq 3$, then for every $\varphi,\psi\in \ob{L}$ such that
$\varphi\prec\psi$ we have $[\psi,\psi]\leq\varphi$.

\end{lem}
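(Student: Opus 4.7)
The plan is to reduce the claim to verifying the property $[\psi,\psi] \leq \varphi$ for a single conveniently chosen prime interval, then propagate by projectivity. Since $\A$ is a finite Mal'cev algebra, $\ob{L}$ is a finite modular lattice, so Proposition \ref{prosta.mreza} will apply: simplicity of $\ob{L}$ is equivalent to every two prime intervals being projective. Combined with Lemma \ref{lemica2}, which says that the property $[\beta,\beta] \leq \alpha$ is invariant under projectivity of prime intervals, this will reduce the lemma to producing a single prime interval $[\varphi_0, \psi_0]$ satisfying $[\psi_0, \psi_0] \leq \varphi_0$.

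To produce one, I will use that $|\ob{L}| \geq 3$ forces $\ob{L}$ to have height at least $2$. Thus I can pick an atom $a$ of $\ob{L}$, and since $a < \mathbf{1}$, I can also pick a coatom $\varphi$ of $\ob{L}$ with $a \leq \varphi$ (by finiteness, $[a, \mathbf{1}]$ contains an element covered by $\mathbf{1}$, which is automatically a coatom of $\ob{L}$). Both $\IntI [\mathbf{0}, a]$ and $\IntI [\varphi, \mathbf{1}]$ are then prime intervals, so they will be projective by the previous paragraph.

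Then I will apply Lemma \ref{lemica1} to this projectivity with $\varepsilon := a$. The biconditional reads $[a, a] \leq \mathbf{0}$ iff $[a, \mathbf{1}] \leq \varphi$; by (BC1), $[a, \mathbf{1}] \leq a$, and since $a \leq \varphi$ by construction, the right-hand side holds automatically. Therefore $[a, a] = \mathbf{0}$, so the prime interval $\IntI [\mathbf{0}, a]$ witnesses the abelian property, and the reduction in the first paragraph then propagates it to every prime interval of $\ob{L}$.

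The only nontrivial choice in the argument is the pairing of an atom with a coatom above it, which is possible exactly because the hypothesis $|\ob{L}| \geq 3$ forces enough height in $\ob{L}$. I do not anticipate any real obstacle beyond making this choice and noticing that (BC1) trivially validates the right-hand side of the Lemma \ref{lemica1} biconditional.
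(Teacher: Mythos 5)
Your proposal is correct, and it takes a genuinely different and shorter route to find the seed prime interval than the paper does. The paper first establishes the existence of a meet irreducible element $\eta$ with $[\eta^+,\eta^+]\leq\eta$: starting from a covering $\alpha\prec\beta$, it uses Lemma~\ref{glupa} to find a meet irreducible $\eta$, then picks a second meet irreducible $\theta$, follows a chain of perspectivities between $\IntI[\eta,\eta^+]$ and $\IntI[\theta,\theta^+]$ to find an $\eta_s$ incomparable to $\eta$, and considers the largest $\gamma$ with $[\gamma,\eta^+]\leq\eta$, deducing $\gamma\geq\eta^+$ via Lemma~\ref{lemica1}. Your argument bypasses all of this: you choose an atom $a$ and a coatom $\varphi$ above it (which exists since $|\ob{L}|\geq3$ forces height $\geq 2$), observe that $\IntI[\ab{0},a]\leftrightsquigarrow\IntI[\varphi,\ab{1}]$ by simplicity and Proposition~\ref{prosta.mreza}, and then apply Lemma~\ref{lemica1} with $\varepsilon=a$; the right-hand side $[a,\ab{1}]\leq\varphi$ holds automatically because (BC1) gives $[a,\ab{1}]\leq a\leq\varphi$, so $[a,a]=\ab{0}$ falls out for free. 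Propagation to an arbitrary $\varphi\prec\psi$ by Lemma~\ref{lemica2} and Proposition~\ref{prosta.mreza} is identical in both proofs. The gain in your version is that it avoids Lemma~\ref{glupa} and the meet-irreducibility bookkeeping entirely, exploiting the observation that the Lemma~\ref{lemica1} biconditional has a trivially true side when the two prime intervals are nested (atom below coatom); the paper's version is less tied to the specific choice of atom and coatom but pays for that generality with a longer argument.
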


\begin{proof}
First we show that there exists a meet irreducible element $\eta\in \ob{L}$ such that $[\eta^+,\eta^+]\leq \eta$. Let $\alpha,\beta\in \ob{L}$ be such that $\alpha\prec\beta$. If $\eta$ is a maximal with the properties $\eta\geq \alpha, \eta\ngeq \beta$,
then $\eta$ is a meet irreducible element of $\ob{L}$, by Lemma \ref{glupa}. We will show that $[\eta^+,\eta^+]\leq \eta$. Since
$|\ob{L}|\geq 3$, there has to be another meet irreducible element $\theta$, and by simplicity
$\IntI [\eta,\eta^+]\leftrightsquigarrow \IntI [\theta,\theta^+]$. Therefore
by Proposition~\ref{prosta.mreza},
 there exist meet irreducible elements
$\eta=\eta_0,\eta_1,\dots,\eta_k=\theta$ and $\alpha_1,\dots,\alpha_k, \beta_1,\dots,\beta_k\in \ob{L}, k\in \mathbb{N}$ such that
$$\IntI [\eta,\eta^+]\searrow \IntI [\alpha_1,\beta_1]\nearrow \IntI [\eta_1,\eta_1^+]\searrow\cdots
\nearrow \cdots \searrow  \IntI [\alpha_k,\beta_k]\nearrow \IntI [\theta,\theta^+].$$
Let $s\in \{1,\dots,k\}$ be minimal such that $\eta_s\neq\eta$. Then $\eta_s$ is incomparable to $\eta$.
Let $\gamma$ be the largest element
$\gamma'$ with the property $[\gamma',\eta^+]\leq \eta$. By Lemma \ref{lemica1}, $\gamma$ is also the largest $\gamma''$ such that
$[\gamma'',\eta_s^+]\leq \eta_s.$ From $[\eta,\eta^+]\leq \eta$ and $[\eta_s,\eta_s^+]\leq \eta_s$, we conclude that $\gamma\geq\eta$
and $\gamma\geq \eta_s$. Therefore, $\gamma\geq \eta_s\lor\eta>\eta$, and so $\gamma\geq \eta^+$. Then
$[\eta^+,\eta^+]\leq[\gamma,\eta^+]\leq\eta.$
Now for proving the Lemma, we choose $\varphi$ and $\psi \in \ob{L}$
with $\varphi\prec\psi$.
Since $\ob{L}$ is simple,
Proposition \ref{prosta.mreza} implies $\IntI [\varphi,\psi]\leftrightsquigarrow \IntI [\eta,\eta^+]$. Then by
Lemma \ref{lemica2}, we obtain $[\psi,\psi]\leq\varphi$.
\end{proof}

\section{Lattices with (AP)} \label{sec:aplatt}

In Section~\ref{sec:mot}, we observed that the congruence lattice of
a $p$-group cannot have a subinterval isomorphic to the $4$-element
Boolean lattice, and we have called such lattices $\ob{M}_2$-free.
Every modular $\ob{M}_2$-free lattice satisfies $\AP$, and modular
$\AP$-lattices of finite height will be described now.
Actually, in this section, we prove that a modular lattice of finite height that satisfies
$\AP$ is a coalesced ordered sum
of simple lattices.
We will denote the height of an element $a$ in a modular lattice $\ob{L}$ with $ht(a)$.

\begin{lem}\label{lema1} \label{lem:simple}
Let $\mathbb{L}$ be a modular lattice of finite height with {\rm (AP)}, and let $\sigma \in \ob{L}$ be minimal among the nonzero cutting elements of $\ob{L}$.
 Then $\IntI [{\mathbf 0},\sigma]$ is simple.
\end{lem}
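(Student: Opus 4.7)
The strategy is to apply Proposition~\ref{prosta.mreza}: it suffices to show that every pair of prime intervals in $\IntI[{\mathbf 0},\sigma]$ is projective, with projectivity chains staying inside $\IntI[{\mathbf 0},\sigma]$.

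First I would extract the following consequence of minimality of $\sigma$: no $\alpha$ with ${\mathbf 0}<\alpha<\sigma$ is a cutting element of $\IntI[{\mathbf 0},\sigma]$. Indeed, if such an $\alpha$ cuts $\IntI[{\mathbf 0},\sigma]$, then since $\sigma$ cuts $\ob{L}$, any $\beta\in\ob{L}$ either lies in $\IntI[{\mathbf 0},\sigma]$ (and so is comparable to $\alpha$) or satisfies $\beta\ge\sigma>\alpha$. Hence $\alpha$ cuts $\ob{L}$, contradicting the minimality of $\sigma$. Therefore, for every $\alpha\in({\mathbf 0},\sigma)$ there is some $\gamma\le\sigma$ incomparable with $\alpha$.

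Next I would observe that perspectivity (and hence projectivity) between intervals contained in $\IntI[{\mathbf 0},\sigma]$ cannot escape this interval. If $\IntI[a,b]\subseteq\IntI[{\mathbf 0},\sigma]$ and $\IntI[a,b]\sim\IntI[c,d]$, then in the ``up'' case $a=b\wedge c$, $d=b\vee c$: if $c\not\le\sigma$ then $c\ge\sigma\ge b$ gives $b\wedge c=b\neq a$, a contradiction, so $c\le\sigma$ and $d=b\vee c\le\sigma$. In the ``down'' case $c=d\wedge a$, $b=d\vee a$ forces $d\le b\le\sigma$. Thus (AP) on $\ob{L}$ restricts to (AP) on $\IntI[{\mathbf 0},\sigma]$, and projective prime intervals inside $\IntI[{\mathbf 0},\sigma]$ are projective via chains inside $\IntI[{\mathbf 0},\sigma]$.

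Now I would prove by induction on $ht(\alpha)$ that every prime interval $\IntI[\alpha,\beta]\subseteq\IntI[{\mathbf 0},\sigma]$ is projective (inside $\IntI[{\mathbf 0},\sigma]$) to a prime interval of the form $\IntI[{\mathbf 0},\delta_0]$. The base $\alpha={\mathbf 0}$ is trivial. For the inductive step with $\alpha>{\mathbf 0}$, pick $\gamma\le\sigma$ incomparable with $\alpha$ and set $\eta:=\alpha\wedge\gamma<\alpha$, $\delta:=\alpha\vee\gamma>\alpha$. Choose $\beta'\in\IntI[\alpha,\delta]$ with $\alpha\prec\beta'$. By (AP), $\IntI[\alpha,\beta]\leftrightsquigarrow\IntI[\alpha,\beta']$. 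Modularity gives an isomorphism $\IntI[\alpha,\delta]\to\IntI[\eta,\gamma]$, $y\mapsto y\wedge\gamma$, so $\eta\prec\beta'\wedge\gamma$, and a direct computation yields $\IntI[\eta,\beta'\wedge\gamma]\nearrow\IntI[\alpha,\beta']$. Since $ht(\eta)<ht(\alpha)$, the induction hypothesis applies to $\IntI[\eta,\beta'\wedge\gamma]$, finishing the step.

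Finally, any two intervals $\IntI[{\mathbf 0},\delta_1]$, $\IntI[{\mathbf 0},\delta_2]$ with atoms $\delta_1,\delta_2$ are projective by (AP) applied directly at ${\mathbf 0}$. Combined with the previous paragraph, all prime intervals in $\IntI[{\mathbf 0},\sigma]$ are projective, so Proposition~\ref{prosta.mreza} yields simplicity. The main obstacle is the bookkeeping that guarantees the projectivity chains stay inside $\IntI[{\mathbf 0},\sigma]$; that is where the cutting property of $\sigma$, together with modularity, is crucial.
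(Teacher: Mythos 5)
Your proof is correct. The core mechanism is the same as the paper's: exploit the minimality of $\sigma$ to find, below any nonzero $\alpha<\sigma$, an incomparable element, then use a modular transposition $\IntI[\alpha\wedge\gamma,\beta'\wedge\gamma]\nearrow\IntI[\alpha,\beta']$ together with $\AP$ to reduce the height of the bottom endpoint. What you do differently is organize this as a direct induction on $ht(\alpha)$ and invoke $\AP$ at $\alpha$ to replace the given cover $\beta$ by the convenient cover $\beta'\leq\alpha\vee\gamma$; the paper instead takes a minimal counterexample with respect to $ht(\beta)$ and therefore needs a side argument (via modularity) that such a minimal $\beta$ must be join irreducible before it can transpose down at $\beta^-$. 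Your version avoids that case split entirely, which is a small but genuine simplification. You also make explicit the point that perspectivity out of a prime interval contained in $\IntI[{\mathbf 0},\sigma]$ cannot leave that interval (because $\sigma$ is a cutting element), so that the projectivities produced by $\AP$ on $\ob{L}$ really are projectivities of the sublattice $\IntI[{\mathbf 0},\sigma]$ and Proposition~\ref{prosta.mreza} applies; the paper leaves this bookkeeping implicit. Both proofs buy the same conclusion, but yours is somewhat more self-contained on the ``chains stay inside'' issue and streamlines the induction.
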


\begin{proof}
For an atom $\bar{\beta}$ in $\IntI [{\mathbf 0},\sigma]$, let us
call the interval $\IntI [{\mathbf 0},\bar{\beta}]$ a basic interval
in $\IntI [{\mathbf 0},\sigma]$. Note that all basic intervals are
projective (using (AP)).

Assume that $\IntI [{\mathbf 0},\sigma]$ is not simple. Then by
Proposition \ref{prosta.mreza} there exists an element $\beta$ of
minimal height in $\IntI [{\mathbf 0},\sigma]$ with the property
that there exists an $\alpha\in \ob{L}$ such that
$\alpha \prec\beta$ and  $\IntI [\alpha,
\beta]$ is not projective to $\IntI [{\mathbf 0},\bar{\beta}]$.
We claim that $\beta$ is join irreducible. Let us suppose the opposite that there exists an $\alpha_1 \ne \alpha$ covered by
$\beta$. Since $\mathbb{L}$ is modular,
 $\alpha \land \alpha_1 \prec \alpha_1$ and $\alpha \land \alpha_1
\prec \alpha$, which implies  that $\IntI [\alpha, \beta] \searrow \IntI [\alpha \land \alpha_1, \alpha_1]$. Now $\alpha_1$ has smaller
height than $\beta$,
which is a contradiction
to the minimality of $\beta$.

Since $\sigma$ is a minimal cutting element and $\beta^- \prec \beta \leq \sigma$ , $\beta^-$ does not cut $\mathbb{L}$. Our
next step is to demonstrate that in this case there exists a $\gamma \ne \beta^-$ such that $ht(\gamma)=ht(\beta^-)$. Since
$\beta^-$ does not cut $\mathbb{L}$, there is an element $\delta$, incomparable with $\beta^-$. Since $\ob{L}$ is
a modular lattice, \cite[Theorem~2.37]{MMT:ALVV} implies that every maximal chain from
$\beta^-\land\delta$ to $\beta^-\lor\delta$ has the same length. Let $C$ be a maximal chain
in $\IntI [\beta^- \meet \delta, \beta^- \join \delta]$ that contains $\delta$.
In $C$, we find an element $\gamma$ with $ht (\gamma) = ht (\beta^-)$. Since $\gamma$ is comparable with $\delta$, we have
$\gamma \neq \beta^-$. We choose $\tilde{\gamma}$ such that
$\gamma \meet \beta^- \prec \tilde{\gamma}  \leq \gamma$.
Since  $\IntI [\gamma \land \beta^-, \tilde{\gamma}] \nearrow \IntI [\beta^-, \tilde{\gamma} \lor \beta^-]$, and  since projective
intervals in a modular lattice are isomorphic (\cite[Corollary~2.28]{MMT:ALVV}), we have $\beta^- \prec \tilde{\gamma} \lor \beta^-$.
Note that since $ht(\tilde{\gamma}) \leq ht(\gamma) < ht(\beta)$,  we have $\tilde{\gamma} \ne \beta$. Using this fact and the join
irreducibility of $\beta$, we obtain that $\tilde{\gamma} \lor \beta^- \ne \beta$. Therefore,
$$\IntI [\gamma \land \beta^-, \tilde{\gamma}] \nearrow \IntI [\beta^-, \tilde{\gamma} \lor \beta^-] \stackrel{(AP)}{\leftrightsquigarrow}
\IntI [\beta^-,\beta],$$ which means that $\IntI [\gamma \land
\beta^-, \tilde{\gamma}]$ is not projective with a basic interval.
As $ht(\tilde{\gamma}) < ht(\beta)$, this yields a contradiction to
the choice of $\beta$. Thus, $\IntI [{\mathbf 0},\sigma]$ is simple.
\end{proof}

If we have a lattice $\mathbb{L}$ and elements $\alpha_0,\dots,
\alpha_{n}$ that cut $\mathbb{L}$ such that ${\mathbf 0}=\alpha_0<
\alpha_1 < \dots < \alpha_n= {\mathbf 1}$ and every sublattice
$\IntI [\alpha_i,\alpha_{i+1}], i\in \{0,\dots ,n-1\}$ is simple,
then we say that $\mathbb{L}$ is a \emph{coalesced ordered sum of
simple lattices}.

\begin{thr}\label{thm:marijana}
Let $\mathbb{L}$ be a modular lattice of finite height. The following are equivalent:
\begin{enumerate}
 \item $\mathbb{L}$ has {\rm (AP)},
 \item $\mathbb{L}$ is a coalesced ordered sum of simple lattices.
\end{enumerate}
\end{thr}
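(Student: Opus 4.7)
The plan is to establish both implications separately, with Lemma~\ref{lem:simple} doing the heavy lifting in the harder direction $(1)\Rightarrow(2)$.

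For $(2)\Rightarrow(1)$, suppose $\ob{L}=\IntI[\gamma_0,\gamma_1]\cup\cdots\cup\IntI[\gamma_{k-1},\gamma_k]$ with each block simple and each $\gamma_i$ cutting $\ob{L}$. Given $\alpha\prec\beta$ and $\alpha\prec\gamma$ in $\ob{L}$, I choose an index $i$ with $\alpha\in\IntI[\gamma_i,\gamma_{i+1}]$. Since $\gamma_{i+1}$ cuts $\ob{L}$, both $\beta$ and $\gamma$ are comparable to $\gamma_{i+1}$; combined with $\alpha\le\gamma_{i+1}$ and the cover relations $\alpha\prec\beta$, $\alpha\prec\gamma$, this forces $\beta,\gamma\le\gamma_{i+1}$. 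Hence both prime intervals $\IntI[\alpha,\beta]$ and $\IntI[\alpha,\gamma]$ live inside the simple modular lattice $\IntI[\gamma_i,\gamma_{i+1}]$, and Proposition~\ref{prosta.mreza} yields $\IntI[\alpha,\beta]\leftrightsquigarrow\IntI[\alpha,\gamma]$.

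For $(1)\Rightarrow(2)$, let $\mathbf{0}=\gamma_0<\cdots<\gamma_k=\mathbf{1}$ be the cutting elements of $\ob{L}$, and write $\ob{L}_i:=\IntI[\gamma_i,\gamma_{i+1}]$. The plan is to check that, viewed as a bounded lattice on its own, each $\ob{L}_i$ is modular of finite height, inherits $\AP$ from $\ob{L}$, and has $\gamma_{i+1}$ as its unique minimal nonzero cutting element; Lemma~\ref{lem:simple} then delivers simplicity of $\ob{L}_i$. Pinpointing the internal cutting elements is elementary: if $\delta\in\ob{L}_i$ cuts $\ob{L}_i$, then every $\alpha\in\ob{L}$ lies in some block $\ob{L}_j$ (because $\alpha$ is comparable to every $\gamma_j$), and either $j\neq i$ (giving $\alpha\le\gamma_i\le\delta$ or $\alpha\ge\gamma_{i+1}\ge\delta$) or $j=i$ (giving comparability by hypothesis); so $\delta$ cuts $\ob{L}$, hence $\delta\in\{\gamma_i,\gamma_{i+1}\}$.

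The main obstacle is verifying that $\AP$ is inherited by $\ob{L}_i$. The key sublemma is that a perspective step cannot escape a block: if $\IntI[a,b]\nearrow\IntI[c,d]$ with $a,b\in\ob{L}_i$, then $c\ge a\ge\gamma_i$, and if $c$ were strictly greater than $\gamma_{i+1}$ one would have $c\ge\gamma_{i+1}\ge b$, forcing $b\wedge c=b\neq a$, a contradiction; hence $c\le\gamma_{i+1}$ and $d=b\vee c\le\gamma_{i+1}$. The parallel argument for $\searrow$ uses that $\gamma_i$ cuts $\ob{L}$ to bound $d$ from below. Iterating along a projectivity chain produced by $\AP$ in $\ob{L}$ for a prime covering inside $\ob{L}_i$ keeps the entire chain inside $\ob{L}_i$, establishing $\AP$ there. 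With this in hand, Lemma~\ref{lem:simple} applied to $\ob{L}_i$, whose minimal nonzero cutting element is $\gamma_{i+1}$, shows that $\ob{L}_i=\IntI_{\ob{L}_i}[\gamma_i,\gamma_{i+1}]$ is simple, completing the coalesced ordered sum decomposition.
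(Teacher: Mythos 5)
Your proof is correct and follows the same strategy as the paper: Proposition~\ref{prosta.mreza} gives $(2)\Rightarrow(1)$, and Lemma~\ref{lem:simple} gives $(1)\Rightarrow(2)$. The only difference is cosmetic---you apply Lemma~\ref{lem:simple} directly to each block $\IntI[\gamma_i,\gamma_{i+1}]$ rather than by induction on the number of cutting elements---and you usefully spell out the step the paper's terse proof leaves implicit, namely that a perspectivity step starting inside a block cannot leave it (because $\gamma_i$ and $\gamma_{i+1}$ cut the lattice), so each block inherits $\AP$.
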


\begin{proof}
Every coalesced ordered sum of simple modular lattices has (AP) by Proposition \ref{prosta.mreza}. For the opposite direction we use
Lemma \ref{lema1} and induction on the number of elements that cut the lattice.
\end{proof}

\section{Finite generation in Mal'cev algebras}\label{Malcev}

In this section, we will characterize which finite Mal'cev algebras with
a simple congruence lattice have the property that their clone of
congruence preserving functions is finitely generated.
Hence this section will close with the proof of
Theorem \ref{thm:malcev}; before this, we need
some preparation.

\begin{lem}\label{nilpotent}
Let $\A$ be a finite Mal'cev algebra and let $\ob{L}$ be the congruence lattice of $\A$. If $\ob{L}$ is simple and $|\ob{L}|\geq 3$, then $\A$ is nilpotent.
\end{lem}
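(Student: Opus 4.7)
The plan is to prove the descending central series $\gamma_1 = [\mathbf{1},\mathbf{1}]$, $\gamma_{i+1} = [\mathbf{1},\gamma_i]$ of $\A$ terminates at $\mathbf{0}$ by establishing, for every $\psi \in \ob{L}$ with $\psi > \mathbf{0}$, the strict inequality $[\mathbf{1}, \psi] < \psi$. Once this is in hand, the central series descends strictly until it hits $\mathbf{0}$, and finiteness of $\ob{L}$ guarantees this happens at some finite stage, giving nilpotency of $\A$.

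First, I would extract a single seed instance of that strict inequality from Lemma \ref{abelian}. Pick any coatom $\varphi_0 \prec \mathbf{1}$ of $\ob{L}$; applied to the prime interval $[\varphi_0,\mathbf{1}]$, Lemma \ref{abelian} (which uses the hypothesis $|\ob{L}|\ge 3$) yields $[\mathbf{1}, \mathbf{1}] \le \varphi_0$.

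Next, I would propagate this seed to every prime interval using simplicity of $\ob{L}$. By Proposition \ref{prosta.mreza}, every prime interval $[\varphi,\psi]$ of $\ob{L}$ is projective to $[\varphi_0, \mathbf{1}]$. Lemma \ref{lemica1}, taken with $\varepsilon = \mathbf{1}$, transports the commutator inequality across projectivity and yields $[\mathbf{1}, \psi] \le \varphi$ for every prime interval $[\varphi,\psi]$. Because $\ob{L}$ is finite, every $\psi > \mathbf{0}$ is the top of some such prime interval, so $[\mathbf{1}, \psi] \le \varphi < \psi$, which is exactly the strict descent we need.

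I do not expect a real obstacle: the content is carried by the already-proved Lemmas \ref{abelian} and \ref{lemica1}, and the heart of the argument is recognising that simplicity of $\ob{L}$ acts as a uniformity principle, letting a single commutator bound on one prime interval spread to all prime intervals, which in turn forces the central series to strictly decrease at every nonzero value and hence to reach $\mathbf{0}$ in finitely many steps.
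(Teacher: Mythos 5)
Your proof is correct and follows essentially the same route as the paper: you extract $[\mathbf{1},\mathbf{1}]\le\varphi_0$ from Lemma~\ref{abelian} applied to a coatom, propagate the bound $[\mathbf{1},\psi]\le\varphi$ to every prime interval $[\varphi,\psi]$ via simplicity (Proposition~\ref{prosta.mreza}) and the transport Lemma~\ref{lemica1} with $\varepsilon=\mathbf{1}$, conclude $[\mathbf{1},\psi]<\psi$ for all $\psi>\mathbf{0}$, and then use finiteness of $\ob{L}$ to force the lower central series to reach $\mathbf{0}$.
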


\begin{proof}
Let $\varphi\in \ob{L}$ be an element such that $\varphi\prec
{\mathbf 1}$. By Lemma \ref{abelian}, $[{\mathbf 1},{\mathbf 1}]\leq
\varphi$. Let $\alpha,\beta\in \ob{L}$ be such that $\beta\neq
{\mathbf 0}$ and $\alpha\prec\beta$. Since $\IntI [\varphi,{\mathbf
1}]\leftrightsquigarrow \IntI [\alpha,\beta]$, using Lemma
\ref{lemica1} with $\delta=\varepsilon={\mathbf 1}$ and
$\gamma=\varphi$, from $[{\mathbf 1},{\mathbf 1}]\leq \varphi$ we
conclude that $[{\mathbf 1},\beta]\leq\alpha$. Therefore, for every
nonzero element $\beta\in \ob{L}$ we have that
\begin{equation}\label{j-na***}[{\mathbf 1},\beta]< \beta.\end{equation}

Now we define a sequence $(\gamma_i)_{i\in \mathbb{N}}$ of elements
of $\ob{L}$ in the following way: $\gamma_1=[{\mathbf 1},{\mathbf
1}],$ and for $i>1$, $\gamma_i=[{\mathbf 1},\gamma_{i-1}]$. Then for
all $i > 1$, we have $\gamma_i \le \gamma_{i-1}$. Since $\ob{L}$ is
finite,
 there exists a $k > 1$ such that $\gamma_k = \gamma_{k-1}$, and thus $[{\mathbf 1},\gamma_{k-1}]=\gamma_{k-1}$. From (\ref{j-na***}) we conclude that $\gamma_{k-1}={\mathbf 0}$.
 Thus $\A$ is nilpotent.
\end{proof}

\begin{lem}\label{ppo}
Let $\A$ be a finite Mal'cev algebra and let $\ob{L}$ be the congruence lattice of $\A$. If $\ob{L}$ is simple and $|\ob{L}|\geq 3$,
then there exist a prime number $p$ and an $n\in \mathbb{N}$ such that $|A|=p^n$.
\end{lem}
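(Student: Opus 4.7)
The plan is to show that $\A$ is supernilpotent, then apply Proposition~\ref{Lema7.6.AM:SAOH} to decompose $\A$ as a direct product of prime-power algebras, and finally use simplicity of $\ob{L}$ to force exactly one factor to be nontrivial. Lemma~\ref{nilpotent} already gives that $\A$ is nilpotent, so it generates a congruence permutable (hence modular) variety, and the structural tools on supernilpotency apply. In order to invoke Proposition~\ref{Lema3.3.AM:SOCO}, I will need the congruence lattice $\ob{L}$ not to split.

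The key step will be the following lattice-theoretic sub-claim: \emph{a simple modular lattice $\ob{L}$ of finite height with $|\ob{L}|\geq 3$ does not split}. I will prove this by assuming $(\delta,\epsilon)$ is a splitting pair and deriving a contradiction. By replacing $\delta$ with a maximal element below $\mathbf{1}$ that is the first component of some splitting pair, and then replacing $\epsilon$ by any atom below it (these replacements preserve the splitting condition), I may assume $\delta$ is a coatom and $\epsilon$ is an atom of $\ob{L}$. If $\epsilon\not\leq\delta$, I define $h\colon\ob{L}\to\{0,1\}$ by $h(\alpha)=0$ iff $\alpha\leq\delta$; the two alternatives in the splitting condition are then mutually exclusive (since $\epsilon\leq\alpha\leq\delta$ would force $\epsilon\leq\delta$), so $h$ is a well-defined lattice homomorphism with $h(\mathbf{0})=0\neq 1=h(\mathbf{1})$, and $\ker h$ is a proper nontrivial congruence on $\ob{L}$, contradicting simplicity. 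If $\epsilon\leq\delta$, then any atom $\epsilon'\neq\epsilon$ satisfies $\epsilon'\not\geq\epsilon$, so by the splitting condition $\epsilon'\leq\delta$; together with $\epsilon\leq\delta$, every atom of $\ob{L}$ lies below $\delta$. Since every finite simple modular lattice with at least three elements is atomistic (classically, such lattices are $\ob{M}_n$ for some $n\geq 3$, or subspace lattices of finite projective geometries), we obtain $\mathbf{1}=\bigvee\{\text{atoms}\}\leq\delta<\mathbf{1}$, a contradiction.

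With $\ob{L}$ not splitting, Proposition~\ref{Lema3.3.AM:SOCO} yields that $\A$ is supernilpotent, and then Proposition~\ref{Lema7.6.AM:SAOH} supplies a decomposition $\A\cong\A_1\times\cdots\times\A_k$ with each $|A_i|$ a prime power. Since $\A$ is Mal'cev, the variety is congruence permutable and the direct product is skew-free, giving $\Con(\A)\cong\prod_{i=1}^{k}\Con(\A_i)$. If two or more of the factors $\A_i$ were nontrivial, then the kernel of the projection onto a single factor would be a proper nontrivial congruence of $\ob{L}$, violating simplicity. Since $|\ob{L}|\geq 3$ prevents all factors from being trivial, exactly one factor is nontrivial, and hence $|A|=p^n$ for some prime $p$ and some $n\in\N$.

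The hard part will be the lattice-theoretic sub-claim, especially the case $\epsilon\leq\delta$, which relies on the atomicity of finite simple modular lattices of sufficient size; this is a classical structural result, but a short independent argument (for instance, iterating the observation that an element $\alpha\not\leq\delta$ incomparable with the coatom $\delta$ gives $\alpha\lor\delta=\mathbf{1}$ and hence a prime interval $\IntI[\alpha\land\delta,\alpha]$ which, combined with $\delta\prec\mathbf{1}$, would eventually produce an $\ob{M}_2$-subinterval forbidden by $\AP$) may be preferable if a self-contained treatment is desired.
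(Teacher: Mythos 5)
Your argument hinges on the lattice-theoretic sub-claim that \emph{a simple modular lattice of finite height with at least three elements does not split}, and this sub-claim is false. A counterexample is the subgroup lattice of $\Z_9\times\Z_3$, which the paper itself uses as an example in Section~\ref{sec:mot}: it splits via the pair $(\delta,\eps)=(3\Z_9\times\Z_3,\; 3\Z_9\times\{0\})$ (this is exactly the pair $(D,E)$ given there, with $\eps\le\delta$), and it \emph{is} simple --- one can check directly that all prime intervals are projective, or deduce it from Lemma~\ref{lema1} since the only cutting elements are $\{0\}$ and the whole group and the lattice is $\ob{M}_2$-free. This also refutes the supporting assertion that finite simple modular lattices of size at least three are atomistic (or $\ob{M}_n$'s or projective geometry lattices): in this lattice the join of all atoms is the coatom $3\Z_9\times\Z_3$, strictly below $\mathbf{1}$. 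Your treatment of the case $\eps\not\le\delta$ is sound (the indicator map onto the two-element lattice really is a homomorphism with nontrivial kernel), but the case $\eps\le\delta$ is exactly where splitting and simplicity can coexist, and it is the case that actually arises for $p$-groups. The alternative sketch at the end (deriving an $\ob{M}_2$-subinterval) also cannot work: the hypothesis is only that $\A$ is Mal'cev, so $\AP$ is not available, and in any event the lattice above has $\AP$ and still splits.

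Since the sub-claim fails, the appeal to Proposition~\ref{Lema3.3.AM:SOCO} to obtain supernilpotence is unjustified, and the rest of the argument (Kearnes's decomposition, skew-freeness, simplicity forcing one factor) has no foundation. The paper's proof goes in the opposite direction: it never claims the lattice does not split, but instead establishes prime-power cardinality directly by a counting argument --- $|a/\beta|$ is a prime power because $(a/\beta,+_a)$ is a simple module over the endomorphism near-ring for a minimal $\beta$, all minimal quotients in a covering have that same size by projectivity, all congruence classes have uniform size by a nilpotence/term-condition argument, and a maximal chain then gives $|A|=q^m$. Supernilpotence is then deduced \emph{from} the prime-power order (via Proposition~\ref{Lema7.6.AM:SAOH}) later, in the proof of Theorem~\ref{thm:malcev}, not used to establish it. If you want to salvage your outline, you would need a correct substitute for the false sub-claim; but the counterexample above shows that simplicity of $\ob{L}$ alone is too weak, and you would essentially be forced back to the paper's counting argument.
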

 \begin{proof}
  Many of the arguments in this proof come from \cite{FM:CTFC}.
 We choose an element $a \in A$, and a minimal congruence $\beta \succ {\mathbf 0}$ of $\ab{A}$.
 We first establish that the cardinality of $a / \beta$ is a prime power:
 let $d$ be the Mal'cev term of $A$.
 Since $[\beta, \beta] = {\mathbf 0}$, the set $(a / \beta, +_a)$ with $x+_a y = d(x,a,y)$
 is a module over the ring $(\{ p|_{a/ \beta} \setsuchthat p \in \Pol_1 \ab{A}, p(a) = a \},+_a, \circ)$
 and by the minimality of $\beta$, this module is simple. Hence ring theory yields
 that $a / \beta$ is of prime power cardinality.

 Let $\gamma \prec \delta$ be a covering pair in $\ob{L}$.
 We first show
 \begin{equation} \label{eq:allsame}
     |\{ b \setsuchthat (b,a) \in \beta \} | =
     |\{ b/\gamma \setsuchthat (b,a) \in \delta|.
 \end{equation}
 To this end, let $\alpha_1 \prec \beta_1$ and $\gamma_1 \prec \delta_1$ be
 elements of $\ob{L}$ such that $\IntI [\alpha_1, \beta_1] \nearrow \IntI [\gamma_1, \delta_1]$.
 We consider $\psi : \{ b/ \alpha_1 \setsuchthat (b, a) \in \beta_1 \} \to
                     \{ b/ \gamma_1 \setsuchthat (b, a) \in \delta_1 \}$, $\psi (b/\alpha_1) :=
                 b / \gamma_1$ for all $b \in a/\beta_1$.
 Then it is easy to see that $\psi$ is well-defined and injective. Since
 $\beta_1 \circ \gamma_1 = \delta_1$, $\psi$ is also surjective.
 Now~\eqref{eq:allsame} follows from the fact that all prime intervals of $\ob{L}$
 are projective.

 Next we show that for all $a_1, a_2 \in A$,
 \begin{equation} \label{eq:uniform}
    |\{ b/ \gamma \setsuchthat (b, a_1) \in \delta \}| =
    |\{ b/ \gamma \setsuchthat (b, a_2) \in \delta \}|.
 \end{equation}
 To this end, we observe that $\delta/\gamma$ is a minimal congruence of
 $\ab{A} / \gamma$. Now for a minimal congruence $\alpha_2$ of a nilpotent
 Mal'cev algebra $\ab{B}$ with Mal'cev term $d$ and $b_1, b_2 \in B$, the mapping
 $\psi : b_1 / \alpha_2 \to b_2 / \alpha_2$, $x \mapsto d(x, b_1, b_2)$ is
 injective: suppose $d(x_1, b_1, b_2) = d(x_2, b_1, b_2)$ with $x_1, x_2 \in b_1 / \alpha_2$, and let
                             $t(x,y) := d( d(x, y, b_2), d(b_2, y, b_1), b_1)$.
 Then $t (b_1, b_1) = b_1  = t(b_1, b_2)$. Since, by nilpotence, $[\alpha_2, {\mathbf 1}]_{\ab{B}} = {\mathbf 0}$,
 the term condition yields $t (x_1, b_1) = t (x_1, b_2)$ and
 $t(x_2, b_1) = t(x_2, b_2)$.
  Since $t(x_1, b_1) = d ( d ( x_1, b_1, b_2), b_2, b_1)$,
        $t(x_1, b_2) = x_1$,
        $t(x_2, b_1) = d ( d ( x_2, b_1, b_2), b_2, b_1)$,
        $t(x_2, b_2) = x_2$, we obtain $x_1 = x_2$. (This argument also
    appears in
 \cite[Proposition~2.7(4)]{Ai:TPFO2}.)
 Hence all congruence classes of $\alpha_2$ of $\ab{B}$ have the same size.
 This completes the proof of~\eqref{eq:uniform}.

 Now let ${\mathbf 0} = \gamma_0 \prec \gamma_1 \prec \ldots \prec \gamma_m = {\mathbf 1}$  be a maximal chain
 in $\ob{L}$. By induction on $i$, we prove that for all $b \in A$,
  we have $|b / \gamma_i| = q^i$, where
 $q := |a / \beta|$. For $i = 0$, the statement is obvious. Now let
 $i \ge 1$. Then $b / \gamma_i = \bigcup \{ c / \gamma_{i-1} \setsuchthat
                                              (c, b) \in \gamma_{i} \}$.
 By the induction hypothesis, each class $c / \gamma_{i-1}$ has exactly
 $q^{i-1}$ elements. Hence $|b/ \gamma_i| = k q^{i-1}$, where
 $k = | \{c / \gamma_{i-1} \setsuchthat c \in b/ \gamma_i \}|$. By~\eqref{eq:uniform},
 $k = | \{c / \gamma_{i-1} \setsuchthat c \in a / \gamma_i \}|$. Now by~\eqref{eq:allsame},
 $k = | a / \beta| = q$. Thus $|b / \gamma_i| = q^i$, which completes the induction step.

  Altogether, $|A| = |a / \gamma_m| = q^m$.
 \end{proof}

\begin{lem}\label{Malcev21}
Let $\mathbf{A}$ be a finite algebra with a Mal'cev term, and let
$\mathbb{L}$ be the congruence lattice of $\mathbf{A}$. If
$\mathbb{L}$ does not split or $|\ob{L}| \le 2$, then
$\mathrm{Comp}(\mathbf{A})$ is finitely generated.
\end{lem}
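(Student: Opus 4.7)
The argument divides into two cases based on the disjunctive hypothesis.

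\textbf{Case 1: $|\ob{L}| \le 2$.} If $|A|=1$, finite generation is trivial. Otherwise $\ob{L}$ has exactly two elements, so $\ab{A}$ is simple and every finitary operation on $A$ is compatible. Thus $\Comp(\ab{A})$ coincides with the clone of all operations on the finite set $A$, which is finitely generated by a classical result (for instance, by its binary members).

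\textbf{Case 2: $\ob{L}$ does not split.} By Proposition~\ref{Lema3.3.AM:SOCO}, $\ab{A}$ is supernilpotent; fix $k \in \N$ with $\ab{A}$ being $k$-supernilpotent, and fix a Mal'cev term $m$ of $\ab{A}$. The plan is to show that $\Comp(\ab{A}) = \Clo^A(\Comp_k(\ab{A}) \cup \{m\})$. Since $A$ is finite, $\Comp_k(\ab{A})$ is finite, which then yields the desired finite generation.

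To prove the equality, I would adapt the proof of Proposition~\ref{pr6.18.AM:SAOH} (which gives the analogous statement for polynomials) from polynomials to compatible functions. Argue by induction on the arity $n$ of $f \in \Comp_n(\ab{A})$. The base case $n \le k$ is trivial. For the inductive step $n > k$, fix any $a \in A$ and, via an iterated Mal'cev expression built from $m$, decompose $f$ into a Mal'cev combination of (i) the ``boundary'' compatible functions $f_i(\vb{x}) := f(x_1,\ldots,x_{i-1},a,x_{i+1},\ldots,x_n)$, each depending essentially on at most $n-1$ arguments and therefore handled by the inductive hypothesis once the fixed coordinate is identified with another variable, and (ii) the absorbing remainder $g$, which is a compatible function absorbing at $(a,\ldots,a)$. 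Reassembling $f$ from $g$ and the $f_i$ by means of $m$ reduces the task to showing $g \in \Clo^A(\Comp_k(\ab{A}))$.

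The main obstacle is the compatible-function analogue of Proposition~\ref{Cor6.17.AM:SAOH}: one needs that every absorbing compatible function in a $k$-supernilpotent Mal'cev algebra has essential arity at most $k$. Proposition~\ref{Cor6.17.AM:SAOH} is stated only for polynomials, but the argument of \cite[Corollary~6.17]{AM:SAOH} uses only that the absorbing function preserves the higher commutator $[\mathbf{1},\ldots,\mathbf{1}]_{k+1} = \mathbf{0}$. Higher commutators are congruences of $\ab{A}$, and compatible functions preserve every congruence, so the hypothesis of that proof is automatically satisfied in our setting. Hence the same argument applies to the absorbing compatible function $g$, giving the essential-arity bound and completing the induction, and with it the Lemma.
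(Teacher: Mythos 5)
Your Case 1 is fine, and it actually fills a small gap: the paper's own proof of this lemma only handles the non-splitting case and leaves $|\ob{L}|=2$ implicit (it is treated separately elsewhere in the paper via simplicity).

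Your Case 2, however, has a real gap in the central step. You claim that the essential-arity bound of Proposition~\ref{Cor6.17.AM:SAOH} carries over to absorbing \emph{compatible} functions "because compatible functions preserve every congruence, so the hypothesis of that proof is automatically satisfied." This is not a correct reading of what $k$-supernilpotence gives you. The statement $[\mathbf{1},\ldots,\mathbf{1}]_{k+1}=\mathbf{0}$ is a term condition quantified over the \emph{polynomials} of $\ab{A}$; it is a property of $\Pol(\ab{A})$, and a compatible function $g$ that is not a polynomial of $\ab{A}$ simply does not occur in that term condition, so $g$ does not inherit anything from it merely by preserving congruences. Your proposed re-proof of Proposition~\ref{pr6.18.AM:SAOH} for compatible functions suffers from the same defect: at every stage it invokes commutator facts about $\ab{A}$ that constrain $\Pol(\ab{A})$, not $\Comp(\ab{A})$.

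The paper circumvents this entirely by a one-line change of algebra. Set $\ABar := (A, \Comp(\ab{A}))$. Then $\Pol(\ABar) = \Comp(\ab{A})$ (constants are compatible) and $\Con(\ABar) = \Con(\ab{A}) = \ob{L}$ (the fundamental operations of $\ab{A}$ are compatible, and every compatible function preserves every congruence). Since $\ob{L}$ does not split, Proposition~\ref{Lema3.3.AM:SOCO} applied to $\ABar$ gives that $\ABar$ is $k$-supernilpotent for some $k$. Now Proposition~\ref{pr6.18.AM:SAOH} applied to $\ABar$ (which has the same Mal'cev term $m$) gives $\Pol(\ABar) = \Clo^A(\Pol_k(\ABar)\cup\{m\})$, i.e.\ $\Comp(\ab{A}) = \Clo^A(\Comp_k(\ab{A})\cup\{m\})$, and $\Comp_k(\ab{A})$ is finite because $A$ is. In particular, the absorbing compatible functions you worry about are absorbing \emph{polynomials} of $\ABar$, so Proposition~\ref{Cor6.17.AM:SAOH} applies to them verbatim; no re-proving of [AM:SAOH] is needed. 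If you want to salvage your induction, insert this passage to $\ABar$ at the start of Case 2 and then run every cited proposition against $\ABar$ rather than $\ab{A}$.
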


{\it Proof:} Suppose that $\mathbb{L}$ does not split. Let
$\ABar=(A, \mathrm{Comp}(\mathbf{A}))$ and let $\mathbb{L}'$
be the congruence lattice of $\ABar$. Note that
\begin{equation}\label{eq1} \mathbb{L}=\mathbb{L}'\mbox{ and }
\mathrm{Pol}(\ABar)=\mathrm{Comp}(\mathbf{A}).
\end{equation}
From Proposition \ref{Lema3.3.AM:SOCO} we know that $\ABar$ is
supernilpotent because $\mathbb{L}'$ does not split. Then by
Proposition \ref{pr6.18.AM:SAOH}, for some $k\geq 1$ and a Mal'cev
term $m$ in $\ABar$, we have
\begin{equation}\label{eq2}
\Clo^{A} (\mathrm{Pol}_k(\ABar) \cup
\{m\} ) = \mathrm{Pol}(\ABar).
\end{equation}
From \eqref{eq1} and \eqref{eq2} we conclude that
$\mathrm{Comp}(\A)$ is finitely generated.\qed

Now we have all the tools that are needed for proving Theorem~\ref{thm:malcev}.

\emph{Proof of Theorem~\ref{thm:malcev}}:
$(1)\Rightarrow (2)$: Let $\mathrm{Comp}(\mathbf{A})$ be finitely
generated and let $G$ be a finite set of generators for
$\mathrm{Comp}(\A)$. Then the algebra $\mathbf{A}'=(A, G)$ has finite
type and the congruence lattice of $\mathbf{A}'$ is $\mathbb{L}$.
If $|\ob{L}| = 2$, we are done, hence we assume $|\ob{L}| \ge 3$.
Since $\ob{L}$ is a simple lattice, Lemma
\ref{nilpotent} implies that the algebra $\A'$ is nilpotent, and by Lemma
\ref{ppo}, it is of prime power order. From Proposition
\ref{Lema7.6.AM:SAOH} we can see that in this case $\A'$ is
supernilpotent.
Seeking a contradiction, we suppose that $\ob{L}$ has a
splitting pair $(\delta,\epsilon)$. Then
$$\ob{L}=\IntI [{\mathbf 0},\delta]\cup \IntI [\epsilon, {\mathbf 1}].$$
Since $\epsilon>{\mathbf 0}$, there exist $o,c \in A, c \neq o,$
such that $o\equiv_\epsilon c$. Now for every $n \in \N$, we will
construct an absorbing polynomial $c_n$ of $\ab{A}'$ of essential arity $n$. For
$n\in \mathbb{N}$, let $c_n:A^n\rightarrow A$ be defined by
$$c_n(x_1,\dots, x_n)=\left\{\begin{array}{ll}
c& \text{ if } x_1,\dots, x_n\in o/\delta\\
o& \mbox{otherwise}
\end{array}\right.$$
for $x_1,\dots, x_n\in A$. By Proposition~\ref{pro:de}, $c_n$ is
congruence preserving. Therefore, $c_n\in \mathrm{Comp}(\A)$. Since
$\Comp (\ab{A}) = \Clo^A (G)$, $c_n$ is a term function on $\A'$,
and thus $c_n \in \Pol (\A')$. Now we will prove that the essential
arity of $c_n$ is $n$, which means that for each $i\in \{1,\dots
,n\}$, $c_n$ depends on its $i$-th argument. Since $\delta<{\mathbf
1}$, there exists $d\not\in o/\delta$, and then
$c_n(o,\dots,o) = c\neq o=
c_n(o,\dots,o, d ,o,\dots,o)$, with $d$ at the $i$th place.
By its definition, the function $c_n$
is absorbing at $(d,d,\dots, d)$ with value $o$, and
by Proposition~\ref{Cor6.17.AM:SAOH}, the existence of these functions $c_n$
contradicts the supernilpotence of $\A'$.

$(2)\Rightarrow (1)$: Lemma \ref{Malcev21}. \qed

\section{Abelian groups} \label{sec:abelian}
The theory developed so far allows us to characterize those
finite abelian groups that have a finitely generated clone
of congruence preserving functions. We first deal with $p$-groups.
We notice that the congruence lattice of an abelian group is isomorphic
to its subgroup lattice.

\begin{lem} \label{lem:splitabelian}
    Let $p$ be a prime, let $n \in \N$, and let $\alpha_1, \ldots, \alpha_n \in \N$ be
     such that $\alpha_1 \ge \alpha_2 \ldots \ge \alpha_n$.
    Then the subgroup lattice of $G := \Z_{p^{\alpha_1}} \times \cdots \times \Z_{p^{\alpha_n}}$ splits
    if and only if $n = 1$ or ($n \ge 2$ and $\alpha_1 > \alpha_2$).
\end{lem}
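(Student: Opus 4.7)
The plan is to prove the two implications separately: the easy direction (if $n = 1$ or $\alpha_1 > \alpha_2$) by explicit construction of a splitting pair, and the other direction (if $n \ge 2$ and $\alpha_1 = \alpha_2$, no splitting pair exists) by induction on $\alpha_1$ with a case analysis in the inductive step.

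For the easy direction, if $n = 1$ the subgroup lattice is a chain and splits trivially. If $n \ge 2$ and $\alpha_1 > \alpha_2$, writing $G_i := \Z_{p^{\alpha_i}}$, I would exhibit the pair
\[
   D := pG_1 \times G_2 \times \cdots \times G_n,
   \qquad
   E := p^{\alpha_1 - 1}G_1 \times \{0\} \times \cdots \times \{0\}.
\]
Plainly $D < G$ and $E > \{0\}$. If a subgroup $H \le G$ is not contained in $D$, some $(x_1, \dots, x_n) \in H$ has $x_1$ a unit of $\Z_{p^{\alpha_1}}$; then $p^{\alpha_1 - 1}(x_1, \dots, x_n) = (p^{\alpha_1 - 1}x_1, 0, \dots, 0)$ generates $E$ (the higher-index coordinates vanish because $\alpha_i \le \alpha_2 < \alpha_1$ forces $p^{\alpha_1 - 1}x_i = 0$ for $i \ge 2$), so $E \le H$.

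For the hard direction, assume $n \ge 2$, $\alpha_1 = \alpha_2$, and suppose for contradiction that $(\delta, \eps)$ is a splitting pair of the subgroup lattice. Let $S := p^{\alpha_1 - 1}G_1 \times \cdots \times p^{\alpha_n - 1}G_n$ be the socle of $G$, an $n$-dimensional $\ob{F}_p$-vector space whose $1$-dimensional subspaces are precisely the atoms of the subgroup lattice. The first step is to show $\delta \ge S$: since $\eps > \{0\}$, at most one atom can lie above $\eps$ (namely $\eps$ itself when $\eps$ is an atom), so all other atoms lie below $\delta$; as $n \ge 2$ these remaining atoms span $S$, giving $\delta \ge S$. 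The base case $\alpha_1 = 1$ gives $S = G$, contradicting $\delta < G$. In the inductive step $\alpha_1 \ge 2$, I split cases according to whether $\eps \le S$. If $\eps \not\le S$, I pass to $\bar G := G/S \cong \Z_{p^{\alpha_1 - 1}} \times \cdots \times \Z_{p^{\alpha_n - 1}}$ (dropping trivial factors): one checks routinely that $(\delta/S, (\eps + S)/S)$ splits the subgroup lattice of $\bar G$, whose first two factors remain nontrivial with equal exponent $\alpha_1 - 1 < \alpha_1$, contradicting the inductive hypothesis. If $\eps \le S$, I work with the cyclic direct summands $C_k$ (the $k$-th coordinate factor). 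If $\eps$ is not contained in any $C_k$---which covers $\dim \eps \ge 2$ as well as the case of a line in $S$ touching several coordinates---then $C_k \not\ge \eps$ for every $k$, so all $C_k \le \delta$ and $\delta = G$, contradiction. Otherwise $\eps = C_{k_0} \cap S$ for a unique $k_0$; here I would introduce an auxiliary cyclic subgroup $D$ of order $p^{\alpha_1}$, namely the diagonal $\langle(1,1,0,\dots,0)\rangle$ when $k_0 \in \{1,2\}$ (which has order $p^{\alpha_1}$ precisely because $\alpha_1 = \alpha_2$), or $\langle(1,0,\dots,0,1,0,\dots,0)\rangle$ with $1$'s in positions $1$ and $k_0$ when $k_0 \ge 3$. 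A short calculation shows $D \cap S$ is a $1$-dimensional subspace of $S$ distinct from $\eps$, so $D \not\ge \eps$ and $D \le \delta$; combining with a suitable $C_j \le \delta$ for some $j \ne k_0$ with $j \in \{1, 2, k_0\} \setminus \{k_0\}$, one sees $C_j + D \supseteq C_{k_0}$, yielding $C_{k_0} \le \delta$ and hence $\delta = G$, the desired contradiction.

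The main obstacle is the final subcase $\eps = C_{k_0} \cap S$: it resists the quotienting reduction because the image of $\eps$ in $G/S$ is trivial, and it is precisely here that the hypothesis $\alpha_1 = \alpha_2$ is used in an essential way, to ensure that the diagonal element $(1,1,0,\dots,0)$ has maximal order $p^{\alpha_1}$ and thus yields an auxiliary cyclic subgroup not contained in any single coordinate factor.
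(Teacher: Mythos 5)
Your easy direction reproduces the paper's construction exactly: your $D = pG_1 \times G_2 \times \cdots \times G_n$ and $E = p^{\alpha_1-1}G_1 \times \{0\} \times \cdots \times \{0\}$ are, under the hypothesis $\alpha_1 > \alpha_2$, precisely the paper's $D = \{x \in G : p^{\alpha_1-1}x = 0\}$ and $E = p^{\alpha_1-1}G$, and the verification is the same. Your hard direction, however, takes a genuinely different route. The paper argues directly (no induction): it decomposes $G = H \times K$ with $H = (\Z_{p^{\alpha_1}})^k$ the ``top block,'' shows $\{0\}\times K \le D$, picks $x_1 \in H$ of maximal order with $(x_1,0)\notin D$, invokes a complement lemma (Robinson 4.2.7) to write $H = \langle x_1\rangle \oplus H_1$, picks $y_1 \in H_1$ of maximal order, and then deduces $E = \{0\}$ by intersecting two cyclic subgroups of $H$ that are both forced to contain $E$. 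You instead work with the socle $S$: after observing $\delta \ge S$ (because the atoms other than $\eps$ span $S$ when $n\ge 2$), you induct on $\alpha_1$ by passing to $G/S$ when $\eps \not\le S$, and handle $\eps \le S$ by hand, using explicit diagonal cyclic subgroups of maximal order $p^{\alpha_1}$ (where $\alpha_1 = \alpha_2$ enters) to force $\delta = G$. Both arguments are correct; yours trades the complement lemma and a clean two-case finish for an induction with a three-way case split, but has the virtue of staying entirely in coordinates and never leaving the explicit direct-sum presentation of $G$. The observation that $\delta \ge S$ is a nice structural first step that the paper does not isolate.
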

 \emph{Proof:}
      For the ``if''-direction, we first consider the case $n = 1$. Then  $G = \Z_{p^{\alpha_1}}$,
   and we choose $D := \langle p \rangle$, and $E := \langle p^{\alpha_1 - 1} \rangle$.
   Then $(D, E)$ is a splitting pair of the subgroup lattice of $G$.
   Now we assume $n \ge 2$ and $\alpha_1 > \alpha_2$. Then we set
   \begin{equation*}
      \begin{array}{rcl}
           D & := & \{ x \in G \setsuchthat p^{\alpha_1 - 1} x = 0 \} \\
           E & := & \{ p^{\alpha_1 - 1} x \setsuchthat x \in G \}.
       \end{array}
   \end{equation*}
   We claim that $(D,E)$ is a splitting pair of the subgroup lattice of $G$.
   To this end, let $N$ be a subgroup of $G$ with $N \not\le D$. Then $N$ contains an element
   $(x_1, \ldots, x_n)$ with $p^{\alpha_1 - 1} x_1 \neq 0$. Hence
   $p^{\alpha_1 - 1} (x_1, \ldots, x_n) = (p^{\alpha_1 - 1} x_1, 0, 0, \ldots, 0) \in N$.
   Therefore, $N$ contains the generator $(p^{\alpha_1 - 1}, 0, 0, \ldots, 0)$ of the cyclic
   group $E$, and we have $E \le N$.

   For the ``only if''-direction, we assume that $n \ge 2$ and $\alpha_1 = \alpha_2$, and we
   prove that the subgroup lattice of $G$ does not split.
   To this end, let $k \in \{1, \ldots, n\}$ be maximal with $\alpha_1 = \alpha_k$,
   let $H := (\Z_{p^{\alpha_1}})^k$, and let $K := \Z_{p^{\alpha_{k+1}}} \times \cdots \times \Z_{p^{\alpha_n}}$.
   Then $G = H \times K$.
   Now suppose that $(D, E)$ is a splitting pair of the subgroup lattice of $G$.
   We first prove
   \begin{equation} \label{eq:dk}
         \{0\} \times K \le D.
   \end{equation}
   Suppose  $\{0\} \times K \not\le D$. Then $\{0\} \times K  \ge E$. Since $E$ is a
   nontrivial subgroup of $G$, there is $k \in K \setminus \{0\}$ such that
   $(0, k) \in E$.
   We choose $x \in H$ with $\ord (x) = p^{\alpha_1}$. Then for every $r \in K$, the cyclic
   subgroup $\langle (x, r) \rangle$ of $H \times K$ does not contain $(0, k)$, and hence
   $\langle (x, r) \rangle \not\ge E$. Thus, by the splitting property,
   we have $\langle (x, r) \rangle \in D$.
   Hence for every $r \in K$, $(x, r) - (x, 0) \in D$, and thus $\{0\} \times K \le D$.
   This completes the proof of~\eqref{eq:dk}.
   Next, we prove that there is $x_1 \in H$ such that $\ord (x_1) = p^{\alpha_1}$
   and $(x_1,0) \not\in D$. Suppose that for all elements $x_2$ of order
   $p^{\alpha_1}$, we have $(x_2, 0) \in D$. Since these elements generate $H$,
   we obtain $H \times \{0\} \le D$. Together with~\eqref{eq:dk}, we obtain
   $H \times K = D$, which is impossible because $D$ comes from a splitting pair.
   Hence such an $x_1$ exists.
   By \cite[4.2.7]{Ro:ACIT}, there is a subgroup $H_1$ of $H$ such that
   $H$ is an inner direct sum $\langle x_1 \rangle + H_1$.
   Since $H = (\Z_p^{\alpha_1})^k$ and $k \ge 2$, $H_1$ contains an element
   $y_1$ of order $p^{\alpha_1}$.
   Let us first assume $(y_1, 0) \not\in D$.
   Then $\langle (x_1, 0) \rangle \not\le D$, hence
    $E \le \langle (x_1, 0) \rangle$, and similarly,
    $E \le \langle (y_1, 0) \rangle$.
   Thus $E \le \langle (x_1, 0) \rangle \cap \langle (y_1, 0) \rangle \le
                      (\langle x_1 \rangle \times \{0\}) \cap (H_1 \times \{0\}) = \{0\} \times \{0\}$ because
    the sum $H = \langle x_1 \rangle + H_1$ is direct. This is not possible for a splitting pair $(D, E)$.
    The second case is $(y_1, 0) \in D$.
    Then, since $(x_1, 0) \not\in D$, we have  $(x_1 + y_1, 0) \not\in D$.
     Reasoning as above, we obtain
     $E \le (\langle x_1 + y_1 \rangle \cap \langle x_1 \rangle) \times \{0\}$.
     Now let $z = a (x_1 + y_1) = b x_1$ be an element in
      $\langle x_1 + y_1 \rangle \cap \langle x_1 \rangle$, where $a, b \in \Z$.
    Then
    $a y_1 = (b - a) x_1$, hence $a  y_1 = 0$. Since $y_1$ is of order $p^{\alpha_1}$,
    we obtain $p^{\alpha_1} \mid a$, and therefore $a (x_1 + y_1) = 0$, which implies $z = 0$.
    Altogether $E = \{0\}$, which is impossible if $(D, E)$ is a splitting pair. \qed

\begin{thm} \label{thm:abgroups}
     Let $p$ be a prime, let $n \in \N$, and let $\alpha_1, \ldots, \alpha_n \in \N$ be
     such that $\alpha_1 \ge \alpha_2 \ldots \ge \alpha_n$.
    Then the clone of congruence preserving functions of the group
    $G := \Z_{p^{\alpha_1}} \times \cdots \times \Z_{p^{\alpha_n}}$
     is finitely generated if and only if
     $n = 1$ or ($n \ge 2$ and $\alpha_1 = \alpha_2$).
\end{thm}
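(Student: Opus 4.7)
The plan is to deduce the theorem from Theorem~\ref{thm:pgroups} applied to the subgroup lattice $\ob{L}$ of $G$; since $G$ is abelian this is also its lattice of normal subgroups.

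First I would dispose of the case $n=1$: then $G=\Z_{p^{\alpha_1}}$ is cyclic, $\ob{L}$ is a chain in which every element cuts the lattice, and each interval between consecutive cutting elements has exactly two elements, so Theorem~\ref{thm:pgroups} yields finite generation at once.

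For $n\ge 2$ the substantive step is the following claim: the only cutting elements of $\ob{L}$ are $\{0\}$ and $G$. Granting the claim, the unique interval between consecutive cutting elements is all of $\ob{L}$, which has more than two elements since $|G|\ge p^2$. Theorem~\ref{thm:pgroups} therefore reduces finite generation of $\Comp(G)$ to the condition that $\ob{L}$ does not split, and Lemma~\ref{lem:splitabelian} in turn says this is equivalent to $\alpha_1=\alpha_2$. Combining with the case $n=1$ matches the statement of the theorem.

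To prove the claim I would argue by contradiction: let $H$ be cutting with $\{0\}<H<G$. For each $g\in G\setminus H$ the cyclic subgroup $\langle g\rangle$ is not contained in $H$ (it contains $g$), so cuttingness forces $H\le\langle g\rangle$; hence $H$ itself is cyclic, and as a cyclic $p$-group it has a unique minimal subgroup $M$. Since $n\ge 2$ the socle $G[p]\cong\Z_p^n$ contains at least $p+1\ge 3$ subgroups of order $p$, so there is a minimal subgroup $M'$ of $G$ with $M'\ne M$. Applying cuttingness to $H$ and $M'$ gives either $M'\le H$, contradicting uniqueness of the minimal subgroup inside the cyclic group $H$, or $H\le M'$, which forces $H=M'$ (as $|M'|=p$ and $H\ne\{0\}$) and hence $M=M'$, again a contradiction. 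The only real obstacle is establishing this claim; the rest is bookkeeping between Theorem~\ref{thm:pgroups} and Lemma~\ref{lem:splitabelian}.
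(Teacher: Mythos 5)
Your proposal is correct but routes through Theorem~\ref{thm:pgroups} rather than Theorem~\ref{thm:malcev}, which is what the paper invokes directly; the paper in fact proves Theorem~\ref{thm:abgroups} in Section~\ref{sec:abelian}, before Theorem~\ref{thm:pgroups} is established, so your argument inverts the order of dependence. Logically this is harmless: Theorem~\ref{thm:pgroups} follows from Theorem~\ref{teorema} and Theorem~\ref{thm:oppositewithoutM2}, neither of which presupposes Theorem~\ref{thm:abgroups}, so your route is sound and, once Theorem~\ref{thm:pgroups} is in hand, quite economical. The paper, for the non-cyclic case, instead establishes that $\{0\}$ and $G$ are the only cutting elements, invokes Lemma~\ref{lema1} (together with $\AP$ for the subgroup lattice of a $p$-group) to conclude that lattice is simple, and then applies Theorem~\ref{thm:malcev}. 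You and the paper also prove the auxiliary claim (no intermediate cutting element) by genuinely different arguments: the paper decomposes the noncyclic $G$ as a direct sum $A\oplus B$ of two proper summands and rules out all four positions of a hypothetical cutting element $T$ relative to $A$ and $B$; you show that any cutting $H$ with $\{0\}<H<G$ is contained in $\langle g\rangle$ for every $g\notin H$, hence cyclic with a unique minimal subgroup, and then derive a contradiction against the $p+1\ge 3$ minimal subgroups living in the socle $\Z_p^n$. Both are correct; the paper's direct-sum argument is a bit shorter, yours avoids choosing a decomposition and has the pleasant feature of being a single argument serving both directions. Both approaches ultimately rest on Lemma~\ref{lem:splitabelian} for the lattice-splitting criterion.
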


\emph{Proof:}
    For proving the ``if''-direction, let us first assume that
    $G$ is cyclic.  Then  subgroup lattice of $G$ is a chain.
    Hence the subgroup lattice of $G$ is distributive, and so
    the clone of congruence preserving functions of $G$ is generated
    by all binary congruence preserving functions (cf. \cite[Proposition~5.2 (1)]{Ai:OHAH};
     the result also follows from \cite[Corollary~11.7]{AM:TOPC}).
    If  $G$ is not cyclic, then  $n \ge 2$ and
    $\alpha_1 = \alpha_2$.
     Now from N\"obauer's characterization of affine complete groups
    \cite[Satz~5]{No:UDAV} we obtain that $G$ is affine complete, which implies
    that the clone of congruence preserving functions is finitely generated.
     Without resorting to \cite{No:UDAV}, we proceed as follows:
 By Lemma~\ref{lem:splitabelian}, the subgroup lattice
    of $G$ does not split.
    Furthermore, the subgroup lattice of every finite $p$-group satisfies $\AP$.
    Since the congruence lattice of $G$ does not split, $\{0\}$ and $G$ are the
    only cutting elements of the subgroup lattice. Thus Lemma~\ref{lema1}
    implies that the subgroup lattice of $G$ is simple.
     Now Theorem~\ref{thm:malcev} yields
    that the clone of congruence preserving functions of $G$ is finitely generated.

      For proving the ``only if''-direction, we assume that $G$ has a finitely
     generated clone of congruence preserving functions. Suppose
    that the subgroup lattice of $G$ has a cutting element $T$ other than $\{0\}$ and $G$.
    Then either $G$ is cyclic, and the proof is complete, or, as a finite noncyclic
     abelian group, $G$ is isomorphic
    to a direct sum of two proper subgroups $A$ and $B$. Then if
    $A \le T$, $B \le T$, we obtain $G \le T$; if $A \le T$, $B \ge T$, we obtain
     $G = B$; if $A \ge T$, $B \le T$, we obtain $G = A$; and if
     $A \ge T$, $B \ge T$, we obtain $\{0\} = A \cap B \ge T$; none of these can occur.
     Thus $\{0\}$ and  $G$ are the only cutting elements. Since $G$ is a $p$-group, its
    normal subgroup lattice satisfies $\AP$, and therefore its subgroup lattice
    is simple by Lemma~\ref{lema1}. Now Theorem~\ref{thm:malcev} implies
    that either the subgroup lattice is $\{\{0\}, G\}$, or
    the subgroup lattice does not split.
      In the first case, $G$ is cyclic of prime order, and hence $n=1$.
      If the subgroup lattice of $G$ does not split then from Lemma~\ref{lem:splitabelian},
    we obtain that
      $n \ge 2$ and $\alpha_1 = \alpha_2$.  \qed

  Using N\"obauer's description of finite affine complete abelian groups,
  we can now derive
  Theorem~\ref{thm:abelian}.

   \emph{Proof of Theorem~\ref{thm:abelian}:}
      For the ``only if'' direction, we assume that the clone of congruence preserving
     functions of the finite abelian group $G$ is finitely generated.
      By Proposition~\ref{pro:dp2}, the clone of congruence preserving functions
     of each Sylow subgroup of $G$ is finitely generated. Let $S$ be a Sylow subgroup
     of $G$. Then by
      Theorem~\ref{thm:abgroups}, $S$ is cyclic or isomorphic to a group
      $(\Z_{p^{\alpha_1}})^k \times H$ with $\exp (H) \mid p^{\alpha_1 - 1}$ and $k \ge 2$.
       In the latter case, $S$ is affine complete by \cite[Satz~5]{No:UDAV}.
      For the ``if''-direction, we use Proposition~\ref{pro:dp2} and
       observe that the congruence preserving
      functions of a cyclic group are generated by the binary functions
      by \cite[Corollary~11.7]{AM:TOPC} or \cite[Proposition~5.2 (1)]{Ai:OHAH};
      and for an affine complete group, the
      clone of congruence preserving functions is generated by
     the (unary) constant operations and the fundamental operations
     of the group. \qed

\section{Restrictions of compatible functions}

Our next goal is to generalize the results from abelian groups to
arbitrary expanded groups. We will first derive necessary conditions
for $\Comp (\ab{A})$ to be finitely generated.
In expanded groups, we often work with ideals rather than congruences.
If $\ab{A}$ is an expanded group and $I$ is an ideal of $\ab{A}$,
we write $\bar{x} \equiv_I \bar{y}$ if and only if $x_i - y_i
\in I$ for all $i\in \{1,\dots,k\}$. Using this notation, we see
that a function $g \in A^{A^k}$ with $k\in\mathbb{N}$ is compatible
if for every ideal $I$ of $\A$ and for all $\bar{a}, \bar{b}\in A^k$
with $\bar{a}\equiv_I \bar{b}$, we have $f(\bar{a})\equiv_I
f(\bar{b})$.
Again, $\ABar$ denotes the algebra
$\algop{A}{\C{(\ab{A})}}$ that has all compatible functions on
$\ab{A}$ as its basic operations.
For a function $f \in \Comp_n (\ab{A})$ with $f(I^n) \subseteq I$, we write
$f|_I$ or $f|_{I^n}$ for the restriction of $f$ to $I^n$.
 Following \cite[p.6]{HM:TSOF}, we
write $\AI$ for the algebra that $\ABar$ induces on $I$; this means
 \[
     \AI = \algop{I}{\{ c|_{I^{ar (c)}} \setsuchthat c \in \Comp (\ab{A}), c (I^{ar(c)}) \subseteq I \}}.
 \]
  Now for deriving
  a necessary condition for $\Comp (\ab{A})$ being finitely generated,
  we will use induction on the height of the ideal lattice of $\ab{A}$. Let $I$ be a minimal cutting
   element of the ideal lattice of $\ab{A}$ with $I > {\mathbf 0}$. Then the factor algebra $\ab{A} / I$ and the
  induced algebra $\AI$
   have congruence lattices isomorphic to the intervals $\IntI [I, A]$ and $\IntI [{\mathbf 0}, I]$, respectively.
  In order to use some kind of inductive argument, we have to guarantee that $\ab{A}/I$ and $\AI$ also
  have a finitely generated clone of  congruence preserving functions.
  For the factor algebra $\A/I$, this is settled in Lemma~\ref{lem:factor},
  hence we will now
  treat $\ABar|_{I}$.
 Suppose that
   $f, f_1, \ldots, f_m \in \Comp (\ab{A})$ are congruence preserving functions
  of $\ab{A}$ that map $I^{n_i}$ into $I$
  and $f  \in \Clo^{A} (f_1, \ldots, f_m)$. Unfortunately, this does not imply
  that the restriction $f|_I$ lies in $\Clo^I (f_1|_I,\ldots, f_m|_I)$.
  However, if we allow to use all ``shifts'' of the $f_i$'s, we will be able to produce $f$ from
  compatible functions all of which map $I^k$ into $I$.
    We will now define these shifts.
    Let $\A$ be a
   finite
   expanded group, let $I$ be an ideal of $\A$, let $k:=|A/I|$, let $(0=s_1,\dots,s_k)$ be a transversal modulo $I$,
and let $r$ be a function that maps every element $a\in A$ to the representative of its $I$-class. Let $n\in \mathbb{N}$. For every $f\in A^{A^{n}}$ and for all $\alpha_1,\dots,\alpha_n \in \{s_1,\dots,s_k\}$ we define $T^n_{(\alpha_1,\dots,\alpha_n)}(f):A^n\rightarrow A$ in the following way:
$$T^n_{(\alpha_1,\dots,\alpha_n)}(f)(x_1,\dots,x_n):=f(x_1+\alpha_1,\dots,x_n+\alpha_n)-r(f(\alpha_1,\dots,\alpha_n))$$
for all $x_1,\dots,x_n\in A$.
Since $I$ is an ideal of $\A$ we have
\begin{equation}\label{IuI}
T^n_{(\alpha_1,\dots,\alpha_n)}(f)(I^n)\subseteq I
\end{equation}
for all $\alpha_1,\dots,\alpha_n\in A$.
Also if $f(I^n)\subseteq I$, since $0=r(f(0,\dots,0))$, we have
\begin{equation}\label{neMenja}
T^n_{(0,\dots,0)}(f)=f.
\end{equation}

\begin{lem}\label{12345}
Let $\A$ be a finite expanded group, let $I$ be an ideal of $\A$, let $k:=|A/I|$, let
$(0=s_1,\dots,s_k)$ be a transversal modulo $I$, and let $n,m\in\mathbb{N}$. For every $f\in A^{A^{n}}, g\in A^{A^{m}}$,
and for every $\vb{\alpha} \in \{s_1,\dots,s_k\}^{n+m-1}$ we have:
\begin{enumerate}
\item \label{it:ts1} $T^n_{(\alpha_1,\alpha_2,\dots,\alpha_n)}(\zeta f)=\zeta (T^n_{(\alpha_2,\dots,\alpha_n,\alpha_1)}(f));$
\item \label{it:ts2} $T^n_{(\alpha_1,\alpha_2,\alpha_3,\dots,\alpha_n)}(\tau f)=\tau (T^n_{(\alpha_2,\alpha_1,\alpha_3,\dots,\alpha_n)}(f));$
\item \label{it:ts3} if $n \ge 2$,
                     then $T^{n-1}_{(\alpha_1,\alpha_2,\dots,\alpha_{n-1})}(\Delta f)=\Delta (T^n_{(\alpha_1,\alpha_1,\alpha_2,\dots,\alpha_{n-1})}(f))$, and
                     for $n = 1$, $\Delta (T^1_{\alpha_1} (f)) = T^1_{\alpha_1} (\Delta f)$.
\item \label{it:ts5} $T^{n+1}_{(\alpha_1,\alpha_2,\dots,\alpha_{n+1})}(\nabla f)=\nabla (T^n_{(\alpha_2,\dots,\alpha_{n+1})}(f));$
\item \label{it:ts4} $T^{m+n-1}_{(\alpha_1,\dots,\alpha_{m+n-1})}(g\circ f)=T^m_{(\alpha_1,\dots,\alpha_m)}(g)\circ T^n_{(r(g(\alpha_1,\dots,\alpha_m)),\alpha_{m+1},\dots,\alpha_{m+n-1})}(f)$.
\end{enumerate}
\end{lem}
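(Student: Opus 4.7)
The plan is to verify each of the five identities by directly unfolding the definitions of the shift operator $T^n_{\vb{\alpha}}$ and of the clone operations $\zeta,\tau,\Delta,\nabla,\circ$, reducing both sides to an expression of the form ``$f$ at shifted arguments'' minus ``the representative $r$ of $f$ at the constant shifts''.

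For items (1)-(4), the relevant clone operation simply permutes, identifies, or inserts a coordinate; consequently the shift vector $(\alpha_1,\dots,\alpha_n)$ on the left-hand side must undergo the same permutation/identification/insertion, which is exactly what is prescribed on the right-hand side. The main summands $f(x_1+\alpha_1,\dots)$ then agree on both sides, and the subtracted representatives $r(f(\alpha_1,\dots))$ agree as well. These verifications amount to careful bookkeeping of indices and present no conceptual difficulty.

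The main obstacle is item (5), the composition rule. The key observation is the telescoping
\[
T^m_{(\alpha_1,\dots,\alpha_m)}(g)(x_1,\dots,x_m) + r(g(\alpha_1,\dots,\alpha_m)) = g(x_1+\alpha_1,\dots,x_m+\alpha_m),
\]
which is precisely why the outer shift on the right uses $r(g(\alpha_1,\dots,\alpha_m))$ as its first entry: when one substitutes the value $T^m_{(\alpha_1,\dots,\alpha_m)}(g)(x_1,\dots,x_m)$ into the first slot of this outer shift, the subtracted constant is cancelled, leaving
\[
f\bigl(g(x_1+\alpha_1,\dots,x_m+\alpha_m),\,x_{m+1}+\alpha_{m+1},\dots,x_{m+n-1}+\alpha_{m+n-1}\bigr),
\]
which is the main summand of the left-hand side. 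It remains to check that the two subtracted representatives agree, namely that
\[
r\bigl(f(g(\alpha_1,\dots,\alpha_m),\alpha_{m+1},\dots,\alpha_{m+n-1})\bigr) = r\bigl(f(r(g(\alpha_1,\dots,\alpha_m)),\alpha_{m+1},\dots,\alpha_{m+n-1})\bigr);
\]
since $g(\alpha_1,\dots,\alpha_m)$ and its representative $r(g(\alpha_1,\dots,\alpha_m))$ are $I$-congruent, compatibility of $f$, which is the intended setting for applying the lemma, forces the two arguments of $r$ above to lie in the same $I$-class, hence to have the same representative.

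A final caution: because the group reduct of $\ab{A}$ need not be abelian, cancellations such as $-u+u=0$ must be performed on the correct side, but the definition of $T^n_{\vb{\alpha}}$ is arranged so that all such cancellations go through without modification.
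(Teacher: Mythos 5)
Your proof is correct and takes essentially the same route as the paper: unfold the definition of $T^n_{\vb\alpha}$, verify items (1)--(4) by direct index bookkeeping, and handle the composition case via the telescoping identity together with the fact that, since $g(\bar\alpha_1)\equiv_I r(g(\bar\alpha_1))$, compatibility of $f$ gives $r(f(g(\bar\alpha_1),\bar\alpha_2))=r(f(r(g(\bar\alpha_1)),\bar\alpha_2))$. You are right to flag that this last step tacitly requires $f$ to be compatible even though the lemma is stated for arbitrary $f\in A^{A^n}$; the paper's proof invokes the same compatibility assumption without having placed it in the hypotheses, and the lemma is only ever applied to elements of $\Comp(\ab{A})$.
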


\begin{proof}
First of all, we see that
items~\eqref{it:ts1},~\eqref{it:ts2},~\eqref{it:ts3}
and~\eqref{it:ts5} hold if $n = 1$. Let $x_1, x_2,
\dots,x_{m+n-1}\in A$ and $\alpha_1,\alpha_2,\dots,\alpha_{m+n-1}\in
\{s_1,\dots,s_k\}$. Now for proving~\eqref{it:ts1}, we let $n \ge 2$
and compute
 \begin{eqnarray*}
& & T^n_{(\alpha_1,\alpha_2,\dots,\alpha_n)}(\zeta f)(x_1,x_2,\dots,x_n)=\\
&=& \zeta f(x_1+\alpha_1,x_2+\alpha_2,\dots,x_n+\alpha_n)-r(\zeta f(\alpha_1,\alpha_2,\dots,\alpha_n))=\\
&=& f(x_2+\alpha_2,\dots,x_n+\alpha_n,x_1+\alpha_1)-r(f(\alpha_2,\dots,\alpha_n,\alpha_1))=\\
&=& T^n_{(\alpha_2,\dots,\alpha_n,\alpha_1)}(f) (x_2,\dots,x_n,x_1)=\\
&=& \zeta (T^n_{(\alpha_2,\dots,\alpha_n,\alpha_1)}(f))(x_1,x_2,\dots,x_n).
\end{eqnarray*}
Analogously we obtain \eqref{it:ts2}, \eqref{it:ts3} and
\eqref{it:ts5} substituting $\zeta$ by $\tau,\Delta$ and $\nabla$
(and adjusting the variables).
Item~\eqref{it:ts4}:
 Let $\bar{\alpha}=(\alpha_1,\dots,\alpha_{m+n-1}), \bar{\alpha}_1=(\alpha_1,\dots,\alpha_m), \bar{\alpha}_2=(\alpha_{m+1},\dots,\alpha_{m+n-1}),\\ \bar{x}=(x_1,\dots,x_{m+n-1}), \bar{x}_1=(x_1,\dots,x_{m}), \bar{x}_2=(x_{m+1},\dots,x_{m+n-1})$. Since $f$ is a compatible function, we have
\(
   r(f(g(\bar{\alpha}_1),\bar{\alpha}_{2}))=r(f(r(g(\bar{\alpha}_1)),\bar{\alpha}_{2})).
\)
Now we obtain
\begin{eqnarray*}
& & T^{m+n-1}_{\bar{\alpha}}(g\circ f)(\bar{x})=\\
&=& (g\circ f)(\bar{x}+\bar{\alpha})-r((g\circ f)(\bar{\alpha}))=\\
&=& f(g(\bar{x}_1+\bar{\alpha}_1),\bar{x}_2+\bar{\alpha}_2)-r(f(g(\bar{\alpha}_1),\bar{\alpha}_2))=\\
&=& f(g(\bar{x}_1+\bar{\alpha}_1)-r(g(\bar{\alpha}_1))+r(g(\bar{\alpha}_1)),\bar{x}_2+\bar{\alpha}_2)
-r(f(g(\bar{\alpha}_1),\bar{\alpha}_2))=\\
&=& f\big(T^m_{\bar{\alpha}_1}(g)(\bar{x}_1)+r(g(\bar{\alpha}_1)),\bar{x}_2+\bar{\alpha}_2 \big)-r(f(r(g(\bar{\alpha}_1)),\bar{\alpha}_2))=\\
&=& T^n_{(r(g(\bar{\alpha}_1)),\bar{\alpha}_2)}(f)(T^m_{\bar{\alpha}_1}(g)(\bar{x}_1),\bar{x}_2)=\\
&=& \Big(T^m_{\bar{\alpha}_1}(g)\circ T^n_{(r(g(\bar{\alpha}_1)),\bar{\alpha}_2)}(f) \Big) (\bar{x}).
\end{eqnarray*}

\end{proof}

Next, we will prove that if a compatible function $f$ on $\ab{A}$ with $f(I^{ar (f)}) \subseteq I$ can be generated from $f_1, \ldots, f_n \in \Comp (\ab{A})$, then
the restriction $f|_I$ of $f$ to $I$ can be generated by all translations $T^n_{\bar \alpha} (f_i)$ of the generators. All of these translations are
operations on $I$.
\begin{lem}\label{B}
Let $\A$ be a finite expanded group, and let $f_1,\ldots, f_m \in \Comp (\ab{A})$.
 Let $I$ be an ideal of $\A$, let $k:=|A/I|$, and let $(0=s_1,\dots,s_k)$ be a transversal modulo $I$.
Let $S := \{s_1, \ldots, s_k\}$, and let
\begin{equation*}
  B := \big\{ T^{ar(f_i)}_{(\alpha_1,\dots,\alpha_{ar(f_i)})}(f_i) \mid
i\in \{1,\dots,m\},
      (\alpha_1,\dots,\alpha_{ar(f_i)})\in \{s_1,\dots,s_k\}^{ar(f_i)} \big\} .
\end{equation*}
Then for every $f \in \Clo^{A}(f_1,\dots,f_m)$ we have:
\begin{equation} \label{eq:beta}
  \text{ for all } \vb{\beta} \in S^{ar (f)} \text{ : } T^{ar(f)}_{(\beta_1,\ldots, \beta_{ar (f)})} (f) \in \Clo^{A}( B ).
\end{equation}
\end{lem}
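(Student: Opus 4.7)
\emph{Proof proposal.} The plan is to proceed by induction on the derivation of $f$ as an element of the function-algebra subalgebra of $(P_A,\id_A,\zeta,\tau,\Delta,\nabla,\circ)$ generated by $f_1,\dots,f_m$. Concretely, I verify~\eqref{eq:beta} for the generators $\id_A$ and $f_1,\ldots,f_m$, and then show that the collection of functions satisfying~\eqref{eq:beta} is closed under each of the clone operations $\zeta,\tau,\Delta,\nabla,\circ$. Since every member of $\Clo^A(f_1,\ldots,f_m)$ is obtained from the generators by iterated application of these operations, this suffices.

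For the base cases, if $f=f_i$ then $T^{ar(f_i)}_{\vb{\beta}}(f_i)\in B\subseteq \Clo^A(B)$ for every $\vb{\beta}\in S^{ar(f_i)}$ directly from the definition of $B$. If $f=\id_A$, then for every $\beta\in S$ one computes
\[
   T^1_{\beta}(\id_A)(x) \;=\; (x+\beta)-r(\beta) \;=\; x,
\]
because $\beta\in S$ forces $r(\beta)=\beta$. Hence $T^1_{\beta}(\id_A)=\id_A$, which lies in every clone on $A$.

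For the inductive step, assume~\eqref{eq:beta} holds for $f$ and for $g$. Each identity of Lemma~\ref{12345} rewrites the translation $T_{\vb{\beta}}(\omega(f))$, respectively $T_{\vb{\beta}}(g\circ f)$, as the same clone operation applied to translations of $f$ (and $g$) indexed by new shift tuples. For $\omega\in\{\zeta,\tau,\Delta,\nabla\}$, the new tuples are permutations, duplications, or sub-tuples of $\vb{\beta}$, so they remain in a power of $S$; the induction hypothesis places the inner translations in $\Clo^A(B)$, and closure of $\Clo^A(B)$ under the matching clone operation completes that case. For the composition identity, the inner shift tuple for $f$ becomes $(r(g(\vb{\beta}_1)),\vb{\beta}_2)$; this lies in $S^n$ precisely because the range of $r$ is the transversal $S$. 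The induction hypothesis therefore applies to both $T^m_{\vb{\beta}_1}(g)$ and $T^n_{(r(g(\vb{\beta}_1)),\vb{\beta}_2)}(f)$, and closure of $\Clo^A(B)$ under $\circ$ yields $T^{m+n-1}_{\vb{\beta}}(g\circ f)\in \Clo^A(B)$.

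The only delicate point is this last composition case: one must observe that the range of the representative map $r$ is exactly the transversal $S$, so that the shifted inner index for $f$ still lies in $S^n$ and the induction hypothesis can be invoked on it. Every other case is a routine bookkeeping step that copies the corresponding identity of Lemma~\ref{12345}.
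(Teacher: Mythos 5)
Your proof is correct and takes essentially the same approach as the paper: the paper phrases it as showing that the set of functions satisfying~\eqref{eq:beta} is a subuniverse of the function algebra $(\Comp(\ab{A}),\id_A,\zeta,\tau,\Delta,\nabla,\circ)$ containing $\{f_1,\ldots,f_m\}$, which is just the structural-induction argument you give in different words. Your explicit remark that the composition case goes through because $r$ has range $S$ (so the new inner shift tuple $(r(g(\vb{\beta}_1)),\vb{\beta}_2)$ stays in $S^n$) is indeed the one point that deserves attention; the paper relies on it implicitly through the way $r$ was defined.
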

\begin{proof}
We will show that the set of functions $f \in \Comp (\ab{A})$ satisfying~\eqref{eq:beta}
is a subalgebra of $(\Comp (\ab{A}), \id_A, \zeta, \tau, \Delta, \nabla, \circ)$
that contains $\{f_1,\ldots, f_m\}$.
First, we show that all $f_i$ satisfy~\eqref{eq:beta}.
  Let  $i\in \{1,\dots,m\}$. Then $T^{ar(f_i)}_{(\beta_1,\ldots, \beta_{ar (f_i)})}(f_i)\in B \subseteq \Clo^{A} (B)$.
For showing that the functions that satisfy~\eqref{eq:beta} form a subalgebra,
we first show that the identity map $\id_A$ satisfies \eqref{eq:beta}.
 Let $\beta_1 \in S$. Then $T^{(1)}_{(\beta_1)} (\id_A) (x_1) = x_1 + \beta_1 - r (\beta_1)$. Since
 $\beta_1 \in S$, we have $r (\beta_1) = \beta_1$, and therefore
 $T^{(1)}_{(\beta_1)} (\id_A)  = \id_A$. Thus
 $T^{(1)}_{(\beta_1)} (\id_A) \in \Clo^A (B)$.
Now suppose that $g,h\in \Comp (\ab{A})$ both satisfy~\eqref{eq:beta}.
It follows from Lemma~\ref{12345} that for all $f\in \{\zeta g,\tau g,\Delta g,g\circ h,\nabla g\}$,
and for all $\vb{\beta} \in S^{ar(f)}$, we have
$T^{ar(f)}_{\vb{\beta}}(f)\in \Clo^{A} ( B )$.
 Hence the set of functions that satisfy~\eqref{eq:beta} is a subalgebra of
$(\Comp (\ab{A}), \id_A, \zeta, \tau, \Delta, \nabla, \circ)$ that contains
 $\{ f_1,\dots,f_m \}$. Hence every $f \in \Clo^{A}(f_1,\dots,f_m)$ satisfies~\eqref{eq:beta}.
\end{proof}

We will now investigate the connection between congruence preserving functions of $\ab{A}$ and $\AI$.
One construction that we need is the following:
\begin{de} \label{de:prosirenje}
   Let $\ab{A}$ be an expanded group, let $I$ be an ideal of $\ab{A}$, let $n \in \N$, let $f : I^n \to I$, and let $c\in I$.
   Then we  define $f^{c}:A^{n}\rightarrow I$ by
 \begin{equation}\label{prosirenje.def}
f^c(\bar{x}) =
\begin{cases}
f(\bar{x}), & \text{if } \bar{x} \in {I}^n,\\
c, & \text{else}.
\end{cases}
\end{equation}
\end{de}

For an expanded group $\ab{A}$, the algebra $\AI = (I, \{ f|_I \setsuchthat f \in \Comp (\ab{A}), \, f(I^{ar(c)}) \subseteq I \})$ also
has group operations among its fundamental operations; these are obtained
by restricting the group operations of $\ab{A}$ to $I$. We have the following easy relations between the congruences of $\AI$ and $\ab{A}$.
\begin{lem} \label{lem:comps}
   Let $\ab{A}$ be an expanded group, let $I$ be an ideal of $\ab{A}$, and let $(0=s_1,\ldots, s_k)$ be a transversal
   of $\ab{A}$ modulo $I$.
   \begin{enumerate}
      \item \label{it:1} The set of congruences of $\AI$ is given by
   \begin{equation} \label{eq:conai}
  \Con (\AI) = \{ \{ (x,y) \in I^2 \setsuchthat x - y \in J \} \setsuchthat J \text{ is an ideal of } \ab{A} \text{ with } J \subseteq I \}.
   \end{equation}
      \item \label{it:2} For each $n$-ary  congruence preserving function $f \in \Comp (\ab{A})$ and for all $\alpha_1,\ldots, \alpha_n \in \{s_1,\ldots, s_k\}$,
            the function $T^{n}_{(\alpha_1,\ldots,\alpha_n)} (f)|_{I^n}$ is a congruence preserving function of $\AI$.
      \item \label{it:3} If the ideal $I$ cuts the lattice of ideals of $\ab{A}$, $c \in I$, and $g$ is an $n$-ary  congruence preserving function of $\AI$, then the function
            $g^c$
is a congruence preserving function of $\A$.
   \end{enumerate}
\end{lem}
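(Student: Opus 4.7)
The plan is to treat the three parts in order, with each building on the previous.

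For part \eqref{it:1}, I would prove both inclusions. The inclusion ``$\supseteq$'' is direct: each relation $\{(x,y) \in I^2 \setsuchthat x - y \in J\}$ is the restriction to $I^2$ of the congruence of $\ab{A}$ determined by $J$, and every fundamental operation of $\AI$ is the restriction of some $c \in \Comp(\ab{A})$, so it preserves this congruence. For the reverse inclusion, let $\rho \in \Con(\AI)$ and set $J := 0/\rho \subseteq I$. Since $(I,+,-,0)$ is a group (as $I$ is a normal subgroup of $(A,+)$) whose operations are among the fundamental operations of $\AI$, the equivalence $\rho$ is a congruence of this group, hence $\rho = \{(x,y) \in I^2 \setsuchthat x - y \in J\}$ and $J$ is a subgroup of $(I,+)$. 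To show $J$ is an ideal of $\ab{A}$, I observe that for each $k$-ary fundamental operation $f$ of $\ab{A}$ and each $\vb{a} \in A^k$, the polynomial $q(\vb{x}) := f(\vb{a}+\vb{x}) - f(\vb{a})$ lies in $\Comp(\ab{A})$, satisfies $q(\vb{0}) = 0$, and maps $I^k$ into $I$ because $I$ is itself an ideal; thus $q|_{I^k}$ is a fundamental operation of $\AI$, so $\rho$ is preserved by it, yielding $q(\vb{j}) \in J$ for all $\vb{j} \in J^k$. Normality of $J$ in $(A,+)$ follows analogously via the conjugation polynomial $x \mapsto a + x - a$.

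For part \eqref{it:2}, I use the characterization from \eqref{it:1}. Let $\rho \in \Con(\AI)$ correspond to the ideal $J \subseteq I$, and take $\vb{x}, \vb{y} \in I^n$ with $x_i - y_i \in J$. Then
\[
T^n_{(\alpha_1,\dots,\alpha_n)}(f)(\vb{x}) - T^n_{(\alpha_1,\dots,\alpha_n)}(f)(\vb{y}) = f(\vb{x}+\bar\alpha) - f(\vb{y}+\bar\alpha),
\]
which lies in $J$ because $f \in \Comp(\ab{A})$ and $x_i + \alpha_i \equiv_J y_i + \alpha_i$. Equation \eqref{IuI} ensures that $T^n_{\bar\alpha}(f)|_{I^n}$ takes values in $I$, so this restriction is a function on $I$ that preserves $\rho$.

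For part \eqref{it:3}, the fact that $I$ cuts the ideal lattice gives that every ideal $\beta$ of $\ab{A}$ satisfies $\beta \le I$ or $\beta \ge I$. Let $\vb{x}, \vb{y} \in A^n$ with $x_i - y_i \in \beta$ for all $i$. If $\beta \ge I$, then $g^c(\vb{x}), g^c(\vb{y}) \in I \subseteq \beta$, so their difference lies in $\beta$. If $\beta \le I$, then $\vb{x} \in I^n$ iff $\vb{y} \in I^n$, since $x_i - y_i \in \beta \subseteq I$; in the negative case both values equal $c$, while in the positive case $g^c(\vb{x}) - g^c(\vb{y}) = g(\vb{x}) - g(\vb{y})$, and by part \eqref{it:1} the relation ``$u - v \in \beta$'' on $I$ is a congruence of $\AI$, which $g$ preserves.

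The main step that requires care is the verification in part \eqref{it:1} that $J = 0/\rho$ is an ideal of $\ab{A}$; this hinges on the observation that each translate $q(\vb{x}) = f(\vb{a}+\vb{x}) - f(\vb{a})$ of a fundamental operation vanishes at the origin and maps $I^k$ into $I$, so it occurs among the basic operations of $\AI$ and must be preserved by $\rho$. The remaining parts then fall out straightforwardly.
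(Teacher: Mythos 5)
Your proposal is correct and follows essentially the same route as the paper. In part~\eqref{it:1}, the key idea — that $J := 0/\rho$ is shown to be an ideal of $\ab{A}$ by observing that every shifted fundamental operation $q(\vb{x}) = f(\vb{a}+\vb{x}) - f(\vb{a})$ is congruence preserving, vanishes at $\vb{0}$, maps $I^k$ into $I$, and hence restricts to a fundamental operation of $\AI$ which must preserve $\rho$ — is exactly the paper's argument (with the order of the two subclaims reversed); your treatment of parts~\eqref{it:2} and~\eqref{it:3} likewise matches the paper's, merely spelling out the computations that the paper compresses into ``now follows from the description of congruences.''
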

\emph{Proof:}
   \eqref{it:1} Since all fundamental operations of $\AI$ are restrictions of congruence preserving operations
     of $\A$, each element of the right hand side of \eqref{eq:conai} is a congruence of $\AI$.
    Now let $\gamma$  be a congruence relation of $\AI$. We will first show that $0/\gamma$ is an ideal of $\ab{A}$.
    To this end, let $n \in \N$, let $c$ be an $n$-ary fundamental operation of $\ab{A}$, let $a_1,\ldots, a_n \in A$, and let
    $i_1,\ldots, i_n \in 0/\gamma$.
    Then the function $c' (x_1, \ldots, x_n) := c (a_1 + x_1, \ldots, a_n + x_n) - c(a_1, \ldots, a_n)$ is a congruence
    preserving function of $\ab{A}$ with $c(I^n) \subseteq I$, and therefore its restriction to $I$ is a fundamental operation of
    $\AI$. Thus $c'(0,\ldots, 0) \equiv_\gamma c'(i_1,\ldots, i_n)$, which implies that
    $c(a_1 + i_1, \ldots, a_n + i_n) - c (a_1, \ldots, a_n) \in 0/\gamma$. Setting first $c (x,y) := x + y$, and then
    $c(x) =  - x$, we obtain
     that $0 / \gamma$ is a subgroup of $(A, +)$, setting $c (x) := x$, we obtain that $0/ \gamma$ is normal,
    and letting $c$ be an arbitrary fundamental operation of $\A$, we obtain the ideal property.
    Hence $0 / \gamma$ is an ideal of $\ab{A}$. Using that $\AI$ has group operations among its fundamental operations,
     it is now easy to see that
    $\gamma = \{ (x,y) \in I^2 \setsuchthat x- y \in 0/ \gamma\}$, and hence $\gamma$ appears in the right hand side of \eqref{eq:conai}.
  Item \eqref{it:2}~now follows from the description of congruences in~\eqref{it:1}.
  For item~\eqref{it:3}, we let $J$ be an ideal of $\A$ and $\vb{x}, \vb{y} \in A^n$ with
  $\vb{x} \equiv_J \vb{y}$. Since $I$ cuts the ideal lattice,
   $J \le I$ or $J \ge I$. In the first case, the definition of $f$ and the fact
   that $g$ is congruence preserving yield $f(\vb{x}) - f(\vb{y}) \in J$. If $J \ge I$,
   then $f(\vb{x}) - f(\vb{y}) \in J$ holds because the range of $f$ is contained in $I$. \qed

\begin{lem}\label{Arestrik}
Let $\A$ be a finite expanded group, and let $I\in \mathrm{Id}(\A)$ be an ideal that cuts the ideal lattice
of $\A$.
If $\mathrm{Comp}(\A)$ is finitely generated, then $\mathrm{Comp}(\AI)$ is also finitely generated.
\end{lem}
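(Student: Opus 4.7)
The plan is to transfer a finite generating set for $\Comp(\A)$ to a finite generating set for $\Comp(\AI)$ using the shift operators $T^n_{\vb{\alpha}}$ developed in the previous two lemmas. Suppose $\Comp(\A) = \Clo^A(f_1,\ldots,f_m)$, fix a transversal $(0 = s_1, \ldots, s_k)$ of $A$ modulo $I$, and set $S := \{s_1, \ldots, s_k\}$. I would take as candidate generators the finite set
$$
 B|_I := \{ T^{ar(f_i)}_{\vb{\alpha}}(f_i)|_{I^{ar(f_i)}} \setsuchthat i \in \{1,\ldots,m\}, \, \vb{\alpha} \in S^{ar(f_i)} \},
$$
which, by \eqref{IuI} and Lemma~\ref{lem:comps}~\eqref{it:2}, is a well-defined finite subset of $\Comp(\AI)$.

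Given an arbitrary $g \in \Comp_n(\AI)$, the strategy is to produce $g$ from $B|_I$ in four steps. First, since $I$ cuts the ideal lattice, Lemma~\ref{lem:comps}~\eqref{it:3} applied with $c = 0$ promotes $g$ to a compatible function $g^0 \in \Comp(\A)$. Second, Lemma~\ref{B} applied to $g^0 \in \Clo^A(f_1, \ldots, f_m)$ with $\vb{\beta} = (0, \ldots, 0) \in S^n$ yields $T^n_{(0,\ldots,0)}(g^0) \in \Clo^A(B)$, where $B$ is the unrestricted set of shifts of the $f_i$ from Lemma~\ref{B}. Third, since $g^0(I^n) \subseteq I$ by construction, identity~\eqref{neMenja} gives $T^n_{(0,\ldots,0)}(g^0) = g^0$, hence $g^0 \in \Clo^A(B)$. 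Fourth, restrict this expression to $I^n$ to recover $g = g^0|_{I^n}$.

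The delicate point is the fourth step: one must argue that restricting a $\Clo^A(B)$-term to $I$ yields a bona fide term in $\Clo^I(B|_I)$. This works because every member of $B$ maps $I^\bullet$ into $I$ by \eqref{IuI}, a property that is preserved by each of the clone operations $\zeta, \tau, \Delta, \nabla, \circ$, so a routine induction on term complexity shows that $h \mapsto h|_{I^{ar(h)}}$ is a function-algebra homomorphism from the subalgebra of $\Clo^A(B)$ consisting of those elements that stabilize $I$ into $\Clo^I(B|_I)$, and that in fact all of $\Clo^A(B)$ lies in that subalgebra. Applying this homomorphism to $g^0$ deposits $g$ in $\Clo^I(B|_I)$, proving finite generation of $\Comp(\AI)$. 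All of the real obstacles were already cleared by the shift calculus of Lemma~\ref{12345} and the transfer result Lemma~\ref{B}; here the only remaining work is to assemble them with the cut-element hypothesis via Lemma~\ref{lem:comps}~\eqref{it:3}.
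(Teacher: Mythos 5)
Your proof is correct and essentially identical to the paper's: same candidate generating set (what you call $B|_I$ is the paper's $C$), same promotion of $g$ to $g^0 \in \Comp(\A)$ via Lemma~\ref{lem:comps}~\eqref{it:3}, same application of Lemma~\ref{B} with $\vb{\beta} = \vb{0}$ and identity~\eqref{neMenja}, and the same observation that the restriction map is a function-algebra homomorphism on the subclone of $I$-stabilizing compatible functions (the paper names this subclone $D$ and invokes Lemma~\ref{lem:clohom} to finish, which is what your "routine induction on term complexity" amounts to).
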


\begin{proof}
Assume that $\mathrm{Comp}(\mathbf{A}) = \Clo^{A} (f_1,\dots , f_m )$ and that $(0=s_1,s_2,\dots ,s_k)$, where $k=|A/I|,$ is a transversal modulo $I$.
Let
\begin{equation*}
   \begin{array}{rcl}
        B & := &   \big\{ T^{ar(f_i)}_{(\alpha_1,\dots,\alpha_{ar(f_i)})}(f_i) \mid
i\in \{1,\dots,m\},
            (\alpha_1,\dots,\alpha_{ar(f_i)})\in \{s_1,\dots,s_k\}^{ar(f_i)} \big\}, \\
       C & :=   & \{g|_{I}: g\in B\}.
   \end{array}
\end{equation*}
We will prove that $\Clo^{I} ( C )=\mathrm{Comp}(\AI)$.

For $\subseteq$, it is sufficient to prove
$C\subseteq \mathrm{Comp}(\AI)$.
Let $g := T^{n}_{(\alpha_1,\dots,\alpha_{n})}(f_i)|_{I}\in C$, where $n=ar(f_i)$. Then from Lemma~\ref{lem:comps}, we
see that $g \in \Comp (\AI)$.

Next,  we show $\mathrm{Comp}(\AI)\subseteq\Clo^{I} ( C )$. Let $f\in \mathrm{Comp}(\AI)$ and $n=ar(f)$.
Let $f^0 := f \cup 0_{(A^{n} \setminus I^{n})}$; hence $f^0$ interpolates $f$ on $I^n$ and
is $0$ everywhere else. By Lemma~\ref{lem:comps}, $f^0 \in \Comp (\ab{A})$.
Thus by the assumptions, $f^0\in \mathrm{Comp}(\A)=\Clo^{A} (f_1,\dots , f_m)$, and hence from Lemma \ref{B}, we have
$
T^n_{(0,\dots,0)}(f^0)\in \Clo^{A}( B )$.
Since $f^0({I}^n)\subseteq I$, we have
$f^0  = T^n_{(0,\dots,0)}(f^0)$, and therefore $f^0 \in \Clo^{A} (B)$.
Let \[ D := \{ f \in \Comp (\ab{A}) \setsuchthat f (I^{ar(f)}) \subseteq I \}.
\]
 Then $D$ is a clone.
 Now $\{f^0\} \cup B \subseteq D$. Let $P_I$ be the set of all finitary operations on $I$.
The restriction mapping $\varphi : D \to P_I$, $f \mapsto f|_I$ is a homomorphism from
the function algebra $(D, \id_A, \zeta, \tau, \Delta, \nabla, \circ)$ to
$(P_I, \id_I, \zeta, \tau, \Delta, \nabla, \circ)$. Hence from $f^0 \in \Clo^{A} ( B )$, we obtain
$\varphi (f^0) \in \Clo^{I} (\varphi (B) )$, and thus by Lemma~\ref{lem:clohom}, $f \in \Clo^{I} ( C )$.
\end{proof}

\section{Lifting generators from homomorphic images}
Our task in this section is to show that if $\Comp (\ab{A}/I)$ and $\Comp (\AI)$ are both finitely generated,
then so is $\Comp (\ab{A})$. This amounts to producing generators for $\Comp (\ab{A})$ from the generators
of the compatible functions of $\ab{A}/I$ and $\AI$.
To this end, we define certain  modifications of the projection operations.
Let $\A$ be an expanded group whose ideal lattice has a cutting element $I$.
For every $m,n\in \mathbb{N}$, $m\leq n$, we define a function $\0nmI:A^n\rightarrow I$ by
\begin{equation}\label{o(j-na)}
\0nmI(x_1,\dots,x_n):=
\begin{cases}
x_m, & \text{if } (x_1,\dots,x_n) \in {I}^n,\\
0, & \text{else}.
\end{cases}
\end{equation}
By Proposition~\ref{pro:de} (where $\delta = \eps$ is set to be the congruence induced by $I$),
all $\0nmI$ are congruence preserving. We will now see that one modified binary projection
generates these modified projections of all arities.

\begin{lem}\label{o}
Let $\A$ be an expanded group whose ideal lattice has a cutting element $I$. Then for every $m,n\in \mathbb{N}$,
$m\leq n$, we have that $\0nmI\in \Clo^{A} ( \021I )$.
\end{lem}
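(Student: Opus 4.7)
The plan is to establish the result in two stages. First, by induction on $n \ge 2$, I would show that every \0{n}{1}{I} belongs to $\Clo^A( \021I )$. Second, I would reduce the case of arbitrary $m \le n$ to the case $m=1$ by using that a clone is closed under permutation of its input variables (which is built into the function algebra operations $\zeta$ and $\tau$).

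For the induction, the base case $n=2$ is immediate. The key identity driving the inductive step is
\[
  \0{n}{1}{I}(x_1,\ldots,x_n) \;=\; \021I \bigl( \0{n-1}{1}{I}(x_1,\ldots,x_{n-1}),\, x_n \bigr),
\]
whose correctness rests on the structural observation that every \0{k}{j}{I} has image contained in $I$, since its values are either some $x_i\in I$ or the element $0\in I$. In particular, the inner call $\0{n-1}{1}{I}(x_1,\ldots,x_{n-1})$ always produces an element of $I$, so the outer \021I sees a first argument lying in $I$. A short case split then verifies the identity: if any of $x_1,\ldots,x_{n-1}$ is outside $I$, the inner value is $0$ and the outer returns $0$ (regardless of $x_n$); if $x_n\notin I$, the outer returns $0$ regardless of its first argument; and in the remaining case, where all $x_i$ lie in $I$, the inner returns $x_1\in I$ and the outer returns $x_1$, matching the value of \0{n}{1}{I}.

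For the second stage, for any $m\le n$ I would invoke the identity
\[
  \0{n}{m}{I}(x_1,\ldots,x_n) \;=\; \0{n}{1}{I}(x_m,\, x_1,\ldots,x_{m-1},\, x_{m+1},\ldots,x_n),
\]
which simply expresses \0{n}{m}{I} as a variable permutation of \0{n}{1}{I}. Since such substitutions are formally generated by repeated application of $\zeta$ and $\tau$, and since the first stage already places \0{n}{1}{I} in $\Clo^A( \021I )$, we conclude that \0{n}{m}{I} lies in the clone as well.

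No step here is genuinely difficult. The only point demanding care is the case analysis in the inductive step, which hinges on the two facts $0\in I$ and $\021I(A^2)\subseteq I$; together these allow the value $0$ to propagate correctly through the composition as soon as any argument escapes $I$.
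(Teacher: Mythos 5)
Your argument is essentially the paper's: both proceed by induction on $n$, and both rest on exactly the same composition identity, namely $\0{n}{1}{I}=\0{n-1}{1}{I}\circ \021I$, i.e.\ $\0{n}{1}{I}(x_1,\ldots,x_n)=\021I(\0{n-1}{1}{I}(x_1,\ldots,x_{n-1}),x_n)$. Your organizational choice differs slightly: the paper carries every index $m\le n$ along in the induction and therefore needs a separate formula $\0nnI=\zeta(\0{n-1}{n-1}I\circ\zeta\022I)$ for the last index, whereas you first obtain all $\0{n}{1}{I}$ and only afterwards produce $\0{n}{m}{I}$ from $\0{n}{1}{I}$ by a variable permutation realized via $\zeta$ and $\tau$; this sidesteps the extra case and is a small gain in clarity. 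The one thing you overlook is $n=1$: your induction starts at $n=2$, and permuting the arguments of a unary function is trivial, so $\011I$ is never produced by your two stages. It is covered at once by $\011I=\Delta\021I$, as in the paper.
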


\begin{proof}
We prove the statement by induction on the number $n$. For $n=1$ we have $\011I=\Delta \021I\in \Clo^{A} (\021I )$.
For the induction step, we let $n \ge 2$ and
assume that $\0{n-1}mI\in \Clo^{A} (\021I)$ for every $m\in\{1,\dots,n-1\}$.
We will verify that
$\0nmI\in \Clo^{A} (\021I)$ for every $m\in\{1,\dots,n\}$. To this end, we first
observe that $\022I = \zeta \021I$ and prove that
$$\0niI=\0{n-1}iI\circ \021I, \quad \mbox{for }i\in\{1,\dots, n-1\},$$
$$\mbox{and }\quad \0nnI=\zeta(\0{n-1}{n-1}I\circ \zeta \022I),$$
or equivalently,
$$\0niI(x_1,\dots,x_n)=\021I(\0{n-1}iI(x_1,\dots, x_{n-1}),x_n), \quad \mbox{for }i\in\{1,\dots, n-1\},$$
$$\mbox{and }\quad \0nnI(x_1,\dots,x_n)=\022I(x_1,\0{n-1}{n-1}I(x_2,\dots,x_n)),$$
for all $x_1,\dots ,x_n\in A.$
Let $i\in\{1,\dots, n-1\}$.
\begin{itemize}
\item If $(x_1,\dots,x_n) \in {I}^n$, then we have
$$\021I(\0{n-1}iI(x_1,\dots, x_{n-1}),x_n)=\021I(x_i,x_n)=x_i=\0niI(x_1,\dots,x_n).$$
\item Otherwise, there exists a $j\in \{1,\dots,n\}$ such that $x_j\not\in I.$
 \begin{itemize}
 \item Let $j\in\{1,\dots, n-1\}$. Note that $\021I(0,x_n)=0$ for every $x_n\in A$.
 Then we obtain $$\021I(\0{n-1}iI(x_1,\dots, x_{n-1}),x_n)=\021I(0,x_n)=0=\0niI(x_1,\dots,x_n).$$
 \item For $j=n$, we have $\021I(\0{n-1}iI(x_1,\dots, x_{n-1}),x_n)=0=\0niI(x_1,\dots,x_n)$.
 \end{itemize}
\end{itemize}

Now assume that $i=n$.
\begin{itemize}
\item The case when $(x_1,\dots,x_n) \in {I}^n$ is same as above.
\item Let $x_j\not\in I$ for a $j\in \{1,\dots,n\}$.
 \begin{itemize}
 \item Let $j\in\{2,\dots, n\}$.
 Note that $\022I(x_1, 0)=0$ for every $x_1\in A$.
 Then we obtain $$\022I(x_1,\0{n-1}{n-1}I(x_2,\dots,x_n))=\022I(x_1, 0)=0=\0nnI(x_1,\dots,x_n).$$
 \item For $j=1$, we have $\022I(x_1,\0{n-1}{n-1}I(x_2,\dots,x_n))=0=\0nnI(x_1,\dots,x_n)$.
 \end{itemize}
\end{itemize}
\end{proof}
In the following lemmas we use the functions of the form $f^{f(\bar{0})}$ that were defined in~Definition~\ref{de:prosirenje}.
 Hence $f^{f (\bar{0})}|_{I^{ar(f)}} = f$ and $f^{f(\bar{0})} (\vb{x}) = f (\vb{0})$ for
all $\vb{x} \in A^{ar(f)} \setminus I^{ar(f)}$.
\begin{lem}\label{nula.u.nulu}
Let $\A$ be an expanded group whose ideal lattice has a cutting element $I$,
and let $g_1, \ldots, g_s \in \Comp (\AI)$.
Then for all $f \in \Clo^{I} (g_1,\dots ,g_s)$
we have  $f^{f(\bar{0})}\in \Clo^{A} (\{ {g_1}^{g_1(\bar{0})},\dots,{g_s}^{g_s(\bar{0})} \} \cup \{\021I\} )$.
\end{lem}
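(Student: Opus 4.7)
My plan is to prove the lemma by induction on the construction of $f$ as an element of $\Clo^{I}(g_1,\ldots,g_s)$. Concretely, I set
\[
H := \bigl\{\, f \in \Clo^{I}(g_1,\ldots,g_s) \bigm| f^{f(\bar 0)} \in \Clo^{A}\bigl(\{g_1^{g_1(\bar 0)},\ldots,g_s^{g_s(\bar 0)}\}\cup\{\021I\}\bigr)\,\bigr\}
\]
and show that $H$ is a subuniverse of the function algebra $(\Clo^{I}(g_1,\ldots,g_s),\id_I,\zeta,\tau,\Delta,\nabla,\circ)$ that contains every $g_i$. Then $H$ is forced to be all of $\Clo^{I}(g_1,\ldots,g_s)$, which is the claim. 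Trivially $g_i \in H$; and $\id_I \in H$ because $\id_I^{\,0} = \0{1}{1}{I}$, which lies in $\Clo^{A}(\021I)$ by Lemma~\ref{o}.

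The workhorse of the closure argument is the gating identity
\[
f^{f(\bar 0)}(\bar x) = f^{f(\bar 0)}\bigl(\0{n}{1}{I}(\bar x),\ldots,\0{n}{n}{I}(\bar x)\bigr) \qquad (\bar x \in A^n),
\]
valid for any $n$-ary $f \in \Comp(\AI)$: if $\bar x \in I^n$ each gate is transparent and both sides equal $f(\bar x)$, while if some $x_j \notin I$ every gate returns $0$ and both sides equal $f(\bar 0)$. Since the gates $\0{n}{k}{I}$ lie in $\Clo^{A}(\021I)$ by Lemma~\ref{o}, this means I may freely ``re-wrap'' the arguments of any function of the form $h^{h(\bar 0)}$ through the gates. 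Closure under $\zeta,\tau,\Delta$ is then immediate: a pointwise check shows $(\sigma f)^{(\sigma f)(\bar 0)} = \sigma(f^{f(\bar 0)})$ for $\sigma \in \{\zeta,\tau,\Delta\}$. For $\nabla$ the naive identity fails (the extra dummy argument $x_1$ is not required to lie in $I$), and I would instead use
\[
(\nabla f)^{(\nabla f)(\bar 0)}(\bar x) = f^{f(\bar 0)}\bigl(\0{n+1}{2}{I}(\bar x),\ldots,\0{n+1}{n+1}{I}(\bar x)\bigr),
\]
verified by splitting on whether $\bar x \in I^{n+1}$.

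The main obstacle is closure under composition $\circ$, because for $f \in H$ of arity $n$ and $g \in H$ of arity $m$ one has $(g\circ f)(\bar 0) = f(g(\bar 0),0,\ldots,0)$, so the naive composite $g^{g(\bar 0)} \circ f^{f(\bar 0)}$ picks the wrong constant on $A^{m+n-1}\setminus I^{m+n-1}$. The remedy is the gated composite
\[
h(\bar x) := f^{f(\bar 0)}\bigl(g^{g(\bar 0)}(\0{m+n-1}{1}{I}(\bar x),\ldots,\0{m+n-1}{m}{I}(\bar x)),\,\0{m+n-1}{m+1}{I}(\bar x),\ldots,\0{m+n-1}{m+n-1}{I}(\bar x)\bigr).
\]
I would verify $h = (g\circ f)^{(g\circ f)(\bar 0)}$ in the two familiar cases: if $\bar x \in I^{m+n-1}$ the gates are transparent and $h(\bar x) = (g\circ f)(\bar x)$; if some $x_j \notin I$ every gate returns $0$, so $g^{g(\bar 0)}(0,\ldots,0) = g(\bar 0) \in I$ and $f^{f(\bar 0)}(g(\bar 0),0,\ldots,0) = f(g(\bar 0),0,\ldots,0) = (g\circ f)(\bar 0)$. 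Since $f^{f(\bar 0)}$ and $g^{g(\bar 0)}$ lie in $\Clo^{A}(\{g_i^{g_i(\bar 0)}\}\cup\{\021I\})$ by induction and the gates lie there by Lemma~\ref{o}, the expression $h$ is a composition inside that clone, so $g\circ f \in H$. This completes the verification that $H$ is a subuniverse, hence $H = \Clo^{I}(g_1,\ldots,g_s)$.
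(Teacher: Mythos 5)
Your proof follows the same overall strategy as the paper: let $H$ be the set of $f\in\Clo^{I}(g_1,\dots,g_s)$ for which the conclusion holds, show $H$ is a subuniverse of the function algebra $(\Comp(\AI),\id_I,\zeta,\tau,\Delta,\nabla,\circ)$ that contains $\id_I$ and every $g_i$, and verify closure under each operation by explicit identities together with Lemma~\ref{o} for the gates $\0{n}{m}{I}$. Your gated formula for $\circ$ is the paper's item~\eqref{it:gl5}, merely written out without the intermediate $\circ$-symbol. However, you have caught a genuine slip in the paper's argument. The paper asserts as item~\eqref{it:gl4} the naive identity
\[
(\nabla h)^{(\nabla h)(\bar 0)} = \nabla\bigl(h^{h(\bar 0)}\bigr),
\]
said to follow ``analogously'' to the $\zeta$-case, but this identity is false: if $x_1\notin I$ while $(x_2,\dots,x_{n+1})\in I^n$ and $h(x_2,\dots,x_{n+1})\neq h(\bar 0)$, the left side is $h(\bar 0)$ and the right side is $h(x_2,\dots,x_{n+1})$. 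The analogy with $\zeta,\tau,\Delta$ breaks precisely because those only permute or merge existing arguments, whereas $\nabla$ introduces a fresh argument whose position must be gated. Your replacement
\[
(\nabla f)^{(\nabla f)(\bar 0)}(\bar x) = f^{f(\bar 0)}\bigl(\0{n+1}{2}{I}(\bar x),\dots,\0{n+1}{n+1}{I}(\bar x)\bigr)
\]
is correct and keeps the right-hand side in $\Clo^{A}(\{g_1^{g_1(\bar 0)},\dots,g_s^{g_s(\bar 0)}\}\cup\{\021I\})$, so $H$ is indeed closed under $\nabla$. The lemma remains true and the rest of the argument is unchanged; this is a small but real correction to the paper's proof.
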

\begin{proof}
Let
\[
    B := \big\{ f \in \bigcup_{n \in \N} I^{I^n} \setsuchthat f^{f(\vb{0})} \in \Clo^{A} (\{ {g_1}^{g_1(\bar{0})},\dots,{g_s}^{g_s(\bar{0})} \} \cup \{\021I\} ) \big\}.
\]
We will first show that $B$ is a subalgebra of
$\algop{\Comp (\AI)}{\id_I, \zeta, \tau, \Delta, \nabla, \circ}$.
We start by verifying that $(\id_I)^0$ lies in
 $\Clo^{A} (\{ {g_1}^{g_1(\bar{0})},\dots,{g_s}^{g_s(\bar{0})} \} \cup \{\021I\} )$.
 Since $(\id_I)^0 = \011I$, this is a consequence of Lemma~\ref{o}. Therefore, $\id_I \in B$.
  For proving that $B$ is closed under all operations, we
  let $g, h \in B$, and we show that all $f \in \{ \zeta h, \tau h, \Delta h, \nabla h, g \circ h \}$
  satisfy $f \in B$.
  To this end, we will prove
  \begin{enumerate}
     \item \label{it:gl1}
               ${(\zeta h)}^{(\zeta h)(\bar{0})}= \zeta (h^{h(\bar{0})})$,
     \item \label{it:gl2}
               ${(\tau h)}^{(\tau h)(\bar{0})}= \tau (h^{h(\bar{0})})$,
     \item \label{it:gl3}
               ${(\Delta h)}^{(\Delta h)(\bar{0})}= \Delta (h^{h(\bar{0})})$,
    \item \label{it:gl4}
               ${(\nabla h)}^{(\nabla h)(\bar{0})}= \nabla (h^{h(\bar{0})})$,
     \item \label{it:gl5} $(g\circ h)^{g\circ h (\bar{0})}= (g^{g(\bar{0})}\circ h^{h(\bar{0})})
                       (\0{m+n-1}1I,\dots,\0{m+n-1}{m+n-1}I)$.
 \end{enumerate}
Let $x_1,\dots,x_{m+n-1}\in A$.
For item~\eqref{it:gl1}, we compute
\begin{eqnarray*}
& & {(\zeta h)}^{(\zeta h)(\bar{0})}(x_1,x_2,\dots,x_n)=\\
&=& \begin{cases}
 {(\zeta h)}(x_1,x_2,\dots,x_n), & \text{if }  (x_1,x_2,\dots,x_n)\in {I}^n,\\
 (\zeta h)(0,\dots,0), & \text{else}
 \end{cases}\\
&=& \begin{cases}
 h(x_2,\dots,x_n,x_1), & \text{if }  (x_1,x_2,\dots,x_n)\in {I}^n,\\
 h(0,\dots,0), & \text{else}
 \end{cases}\\
&=& h^{h(\bar{0})}(x_2,\dots,x_n,x_1)\\
&=& \zeta(h^{h(\bar{0})}) \, (x_1,x_2,\dots,x_n).
\end{eqnarray*}
Analogously we obtain items \eqref{it:gl2}, \eqref{it:gl3} and
\eqref{it:gl4} substituting $\zeta$ by $\tau$, $\Delta$ and $\nabla$
(and adjusting the variables).
 For item \eqref{it:gl5}
 we let $\bar{x}=(x_1,x_2,\dots,x_{m+n-1}),$
$\bar{x}_1=(x_1,x_2,\dots,x_m)$ and $\bar{x}_2=(x_{m+1}, x_{m+2},
\dots,x_{m+n-1}).$ Then
\begin{eqnarray*}
& & (g^{g(\bar{0})}\circ h^{h(\bar{0})})(\0{m+n-1}1I,\dots,\0{m+n-1}{m+n-1}I)(\bar{x})\\
&=& (g^{g(\bar{0})}\circ h^{h(\bar{0})})(\0{m+n-1}1I(\bar{x}),\dots,\0{m+n-1}{m+n-1}I(\bar{x}))\\
&=& \begin{cases}
 (g^{g(\bar{0})}\circ h^{h(\bar{0})})(\bar{x}), & \text{if }  \bar{x}\in {I}^{m+n-1},\\
 (g^{g(\bar{0})}\circ h^{h(\bar{0})})(\bar{0}), & \text{else}
 \end{cases}\\
&=& \begin{cases}
 h^{h(\bar{0})}(g^{g(\bar{0})}(\bar{x}_1),\bar{x}_2), & \text{if }  \bar{x}\in {I}^{m+n-1},\\
 h^{h(\bar{0})}(g^{g(\bar{0})}(\bar{0}),\bar{0}), & \text{else}
 \end{cases}\\
&=& \begin{cases}
 h^{h(\bar{0})}(g(\bar{x}_1),\bar{x}_2), & \text{if }  \bar{x}\in {I}^{m+n-1},\\
 h^{h(\bar{0})}(g(\bar{0}),\bar{0}), & \text{else}
 \end{cases}\\
&=& \begin{cases}
 h(g(\bar{x}_1),\bar{x}_2), & \text{if }  \bar{x}\in {I}^{m+n-1},\\
 h(g(\bar{0}),\bar{0}), & \text{else}
 \end{cases}\\
&=& \begin{cases}
 (g\circ h)(\bar{x}), & \text{if }  \bar{x}\in {I}^{m+n-1},\\
 (g\circ h)(\bar{0}), & \text{else}
 \end{cases}\\
&=& (g\circ h)^{(g\circ h) (\bar{0})}(\bar{x}).
\end{eqnarray*}

 By assumption, $g^{g(\vb{0})}$ and $h^{h(\vb{0})}$ are both elements of $\Clo^{A} (\{ {g_1}^{g_1(\bar{0})},\dots,{g_s}^{g_s(\bar{0})} \} \cup \{\021I\} )$.
  Now the five equations given above and Lemma~\ref{o} imply
  $f^{f(\vb{0})} \in \Clo^{A} (\{ {g_1}^{g_1(\bar{0})},\dots,{g_s}^{g_s(\bar{0})} \} \cup \{\021I\} )$.

  This completes the proof that $B$ is a subalgebra. It is easily seen that it contains
  $g_1, \ldots, g_s$. Thus $\Clo^{I} (g_1, \ldots, g_s) \subseteq B$, which implies the result.
\end{proof}
By
$C(I)$, we will denote the set of all constant functions on $I$, and $C(I)^0 := \{c^0\mid c\in C(I)\}.$

\begin{pr}\label{prosirenja}
Let $\A$ be an expanded group whose ideal lattice has a cutting element $I$.
If $\mathrm{Comp}(\AI)=\Clo^{I} ( g_1,\dots ,g_s)$ with $s\in\mathbb{N}$ and if $f\in \mathrm{Comp}(\AI)$,
then $f^0\in \Clo^{A} ( \{ {g_1}^{g_1(\bar{0})},\dots,{g_s}^{g_s(\bar{0})} \} \cup \{\021I,+\} \cup C(I)^0 )$.
\end{pr}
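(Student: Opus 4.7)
My plan is to reduce to the case $f(\bar{0}) = 0$, where Lemma~\ref{nula.u.nulu} directly produces $f^0$ (rather than $f^{f(\bar{0})}$), and then correct by adding back a constant lying in $C(I)^0$.

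First, I set $c := f(\bar{0}) \in I$ and define $\tilde{f}: I^n \to I$ by $\tilde{f}(\bar{x}) := f(\bar{x}) - c$, where $n$ denotes the arity of~$f$. Because $I$ is an ideal of $\ab{A}$, the restrictions of $+$ and $-$ to $I$ are fundamental operations of $\AI$, and the constant map $\bar{x} \mapsto c$ on $I^n$ lies in $\Comp(\AI)$; hence $\tilde{f} \in \Comp(\AI) = \Clo^I(g_1,\dots,g_s)$. By construction $\tilde{f}(\bar{0}) = 0$, so the extension $\tilde{f}^{\tilde{f}(\bar{0})}$ from Definition~\ref{de:prosirenje} is literally $\tilde{f}^0$.

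Applying Lemma~\ref{nula.u.nulu} to $\tilde{f}$ therefore gives
\[
   \tilde{f}^0 \in \Clo^A\bigl(\{g_1^{g_1(\bar{0})},\dots, g_s^{g_s(\bar{0})}\} \cup \{\pi^{(2)}_{1,I}\}\bigr).
\]
Let $c_n^0 \in C(I)^0$ denote the $n$-ary function that is equal to $c$ on $I^n$ and to $0$ on $A^n \setminus I^n$. A direct check shows $f^0 = \tilde{f}^0 + c_n^0$: on $I^n$ the sum is $(f(\bar{x}) - c) + c = f(\bar{x}) = f^0(\bar{x})$, while on $A^n \setminus I^n$ both summands vanish. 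Since $+$ is listed among the generators, this places $f^0$ inside the clone claimed by the proposition.

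I expect no real obstacle in carrying this out, since the technical heart of the argument has already been supplied by Lemma~\ref{nula.u.nulu}. The only step requiring a new idea is the translation $\tilde{f} := f - c$, designed precisely so that the ``default value'' outside $I^n$ of $\tilde{f}^{\tilde{f}(\bar{0})}$ collapses to $0$; the role of the generators $+$ and $C(I)^0$ is then to undo this translation at the very end.
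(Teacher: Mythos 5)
Your proof is correct and follows essentially the same route as the paper: it subtracts off $f(\bar{0})$ to reduce to the case handled by Lemma~\ref{nula.u.nulu} (where the default value is $0$), and then recovers $f^0$ by adding the zero-extended constant $f(\bar{0})^0$ via the generator $+$. The paper's $f_T$ is exactly your $\tilde{f}$, so this is the same argument up to notation.
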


\begin{proof}
Let $f_T:I^{ar(f)}\rightarrow I$ be defined by $f_T(\bar{x}):= f(\bar{x})-f(\bar{0})$.
Note that $f_T(\bar{0})=0$ and $f_T\in \mathrm{Comp}(\AI)=\Clo^{I} (g_1,\dots ,g_s)$.
By
Lemma \ref{nula.u.nulu},
 we know that
$$f_T^{0}\in \Clo^{A} (\{ {g_1}^{g_1(\bar{0})},\dots,{g_s}^{g_s(\bar{0})} \} \cup \{\021I\}).$$

From $f(\bar{x})=f_T(\bar{x})+f(\bar{0})$ follows that ${f}^0(\bar{x})={f_T}^0(\bar{x})+(f(\bar{0}))^0$.
Therefore we obtain $f^0\in \Clo^{A} ( \{ {g_1}^{g_1(\bar{0})},\dots,{g_s}^{g_s(\bar{0})} \} \cup \{\021I,+\} \cup C(I)^0).$
\end{proof}

\begin{thm} \label{thm:cut}
    Let $\ab{A}$ be a finite expanded group, and let $I$ be a cutting element
    of the ideal lattice of $\ab{A}$. Then
    the clone $\Comp (\ab{A})$ is finitely generated if and only if
    both clones $\Comp (\ab{A} / I)$ and
    $\Comp (\AI)$ are finitely generated.
\end{thm}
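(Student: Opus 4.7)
The ``only if'' direction is immediate from earlier work: since $I$ cuts the ideal lattice of $\ab{A}$, the associated congruence cuts $\Con(\ab{A})$, so Lemma~\ref{lem:factor} yields that $\Comp(\ab{A}/I)$ is finitely generated, while Lemma~\ref{Arestrik} does the same for $\Comp(\AI)$.

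For the ``if'' direction, I fix finite generating sets $\Comp(\ab{A}/I) = \Clo^{A/I}(\bar{h}_1,\ldots,\bar{h}_t)$ and $\Comp(\AI) = \Clo^{I}(g_1,\ldots,g_s)$, and I choose a transversal $(0 = s_1,\ldots,s_k)$ of $A$ modulo $I$. By Proposition~\ref{pro:cut1}, each $\bar{h}_j$ admits a compatible lift $\tilde{h}_j \in \Comp(\ab{A})$ with $\tilde{h}_j^{\ab{A}/I} = \bar{h}_j$. The plan is to show that the finite set
\[
  G := \{\tilde{h}_1,\ldots,\tilde{h}_t\} \cup \{g_1^{g_1(\bar 0)},\ldots,g_s^{g_s(\bar 0)}\} \cup \{\pi^{(2)}_{1,I},\, +,\, -,\, s_1,\ldots,s_k\} \cup C(I)^0
\]
generates $\Comp(\ab{A})$; each member is compatible by Lemma~\ref{lem:comps}(3), Proposition~\ref{pro:de}, or because constants and fundamental group operations always preserve congruences.

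Given $f \in \Comp_n(\ab{A})$, the strategy is to split $f = F + g$ into a ``mod $I$'' piece $F$ and an $I$-valued remainder $g$. Since $f^{\ab{A}/I}$ lies in $\Clo^{A/I}(\bar{h}_1,\ldots,\bar{h}_t)$, Lemma~\ref{lem:clohom} applied to the function-algebra homomorphism $\tilde{f} \mapsto \tilde{f}^{\ab{A}/I}$ produces $F \in \Clo^A(\tilde{h}_1,\ldots,\tilde{h}_t)$ with $F^{\ab{A}/I} = f^{\ab{A}/I}$; then $g := f - F$ is compatible and satisfies $g(A^n) \subseteq I$. To recover $g$ from $G$, for each $\vb{\alpha} \in \{s_1,\ldots,s_k\}^n$ I form the translate $g_{\vb{\alpha}} := T^n_{\vb{\alpha}}(g)|_{I^n}$; because $g(\vb{\alpha}) \in I$ we have $r(g(\vb{\alpha})) = 0$, so this restriction is a genuine map $I^n \to I$, and Lemma~\ref{lem:comps}(2) places it in $\Comp_n(\AI)$. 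Proposition~\ref{prosirenja} then puts $g_{\vb{\alpha}}^0$ in $\Clo^A(G)$, and a direct verification using the partition of $A^n$ into the classes $\vb{\alpha} + I^n$ yields
\[
  g(\vb{x}) \;=\; \sum_{\vb{\alpha}\in\{s_1,\ldots,s_k\}^n} g_{\vb{\alpha}}^0(x_1-\alpha_1,\ldots,x_n-\alpha_n),
\]
since for each $\vb{x}$ exactly one summand is nonzero, and it equals $g(\vb{x})$. This $k^n$-fold sum is assembled inside $\Clo^A(G)$ using $+$, $-$ and the constants $s_i$, so $g \in \Clo^A(G)$, and hence $f = F + g \in \Clo^A(G)$.

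The main technical obstacle is to bundle everything into a single finite generating set that works uniformly in the arity $n$: this is precisely the role of the translation operators $T^n_{\vb{\alpha}}$ from Lemma~\ref{12345} together with the transversal constants $s_i \in G$, which allow each $g_{\vb{\alpha}}^0$ (produced on $I^n$ via Proposition~\ref{prosirenja}) to be shifted back to depend on the original arguments $\vb{x}$ and then summed.
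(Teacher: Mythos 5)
Your proof is correct and follows essentially the same route as the paper: decompose $f$ as $F+g$ where $F\in\Clo^A(\tilde h_1,\ldots,\tilde h_t)$ matches $f$ modulo $I$ (via Lemma~\ref{lem:clohom}) and $g=f-F$ is $I$-valued, then write $g$ as a sum of translates $g_{\vb\alpha}^0(\vb{x}-\vb\alpha)$ supported on single $I$-cosets, each lying in $\Clo^A(G)$ by Proposition~\ref{prosirenja}. The only cosmetic deviation is that you certify $g_{\vb\alpha}\in\Comp(\AI)$ directly via Lemma~\ref{lem:comps}(2), whereas the paper first notes $g_{\vb\alpha}^0\in\Comp(\ab A)$ by Proposition~\ref{pro:de}; and you package the shifts via constants $s_i$ and $-$ rather than the unary maps $x\mapsto x-s_i$, which is equivalent.
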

\emph{Proof:}
    The ``only if''-part has been proved in Lemmas~\ref{lem:factor}~and~\ref{Arestrik}.
    For the ``if''-part, we assume that $\Comp (\AI)$ and $\Comp (\ab{A} / I)$ are
    finitely generated, and we will produce a finite set of generators
    of $\Comp (\ab{A})$.
Let
$G_1$ be a  finite set of generators of the clone
$\mathrm{Comp}(\mathbf{A}/I)$,  let $(0=s_1,s_2,\dots ,s_k)$ be a transversal modulo $I$,
where $k=|A/I|$,
and let  $G_2:=\{\tilde{g}:g\in G_1\}$, where $\tilde{g}$ is defined as in
Proposition~\ref{pro:cut1}. By this proposition, $G_2 \subseteq \mathrm{Comp}(\mathbf{A})$.

Now we will show that for every $f\in \mathrm{Comp}(\mathbf{A})$ there is a $g\in \Clo^{A} (G_2)$ such that
\begin{equation} \label{eq:fg}
  \text{for all } \bar{x}=(x_1,\dots ,x_{ar(f)})\in A^{ar(f)} \,: \, f(\bar{x})\equiv_{I} g(\bar{x}).
 \end{equation}
 We know that $\Clo^{A/I} (G_1) = \Comp (\A /I)$.
Let $f$ be an arbitrary compatible function on $\mathbf{A}$. Then,
we have that $f^{\mathbf{A}/I}\in
\mathrm{Comp}(\mathbf{A}/I)=\Clo^{A/I} ( G_1 )$. Now let $\varphi :
\Comp (\ab{A}) \to
  \Comp (\A / I)$, $h \mapsto h^{\ab{A}/I}$. Then $G_1 = \varphi (G_2)$.
   Hence Lemma~\ref{lem:clohom} yields
   $\varphi (\Clo^A (G_2)) = \Comp (\A / I)$.
   Since $\varphi (f) \in \Comp (\A / I)$, we have
   $\varphi (f)  \in \varphi (\Clo^A (G_2))$. Therefore, there is
   $g \in \Clo^A (G_2)$ such that $g^{\ab{A}/I} = f^{\ab{A}/I}$, which completes the construction of
   a function $g$ satisfying~\eqref{eq:fg}.

Since $h :=f-g$ is a function with $h(A^{ar(f)}) \subseteq I$ and $f=h+g$, our aim is now to
find a finite set of generators of all compatible functions with range contained
in $I$. Let $n \in \N$.
For every $\bar{\alpha}=(\alpha_1,\dots,\alpha_n)\in \{s_1,\dots,s_k\}^n$ we introduce a
function $h_{\bar{\alpha}}':A^n\rightarrow I$ by
\begin{equation*}
h_{\bar{\alpha}}'(\bar{x}) =
\begin{cases}
h  (\bar{x}+\bar{\alpha}), & \text{if } \bar{x} \in {I}^n,\\
0, & \text{else}.
\end{cases}
\end{equation*}
Then for all $\bar{x}\in A^n$ we obtain
\[
   h(\bar{x})=\sum_{\bar{\alpha}\in \{s_1,\dots,s_k\}^n} h_{\bar{\alpha}}'(\bar{x}-\bar{\alpha}).
\]
 Since $h$ is a congruence preserving function with range contained in $I$,
 Proposition~\ref{pro:de} yields that for every
  $\bar{\alpha}=(\alpha_1,\dots,\alpha_n)\in \{s_1,\dots,s_k\}^n$, the function $h_{\bar{\alpha}}'$
  is compatible on $\mathbf{A}$.
  Now for every $\bar{\alpha}=(\alpha_1,\dots,\alpha_n)\in \{s_1,\dots,s_k\}^n$, we have that $h_{\bar{\alpha}}'({I}^n)\subseteq I$, $h_{\bar{\alpha}}'(A^n\setminus {I}^n)=\{0\}$ and $h_{\bar{\alpha}}'|_{I}\in \mathrm{Comp}(\ABar|_{I})$. Let $G_3$ be a finite set of functions such that $\Clo^{I} (G_3)=\mathrm{Comp}(\ABar|_{I})$ and let $G_3^0=\{g^{g(\bar{0})} \mid g\in G_3\}$, where $g^{g(\bar{0})}$ is defined as in (\ref{prosirenje.def}).
Since $h_{\bar{\alpha}}'=(h_{\bar{\alpha}}'|_{I})^0$ and  $h_{\bar{\alpha}}'|_{I}\in \mathrm{Comp}(\ABar|_{I})=\Clo^{I} ( G_3 )$, Proposition \ref{prosirenja} implies that
$$h_{\bar{\alpha}}' \in \Clo^{A} (G^0_3 \cup \{\021{I},+\} \cup C(I)^0).$$
Therefore, $h \in \Clo^{A} (G_3^0 \cup \{\021{I},+\}\cup C(I)^0 \cup \{x-s_i\mid i\in \{1,\dots ,k\}\} )$.
 Thus
 \[ \mathrm{Comp}(\mathbf{A})=\Clo^{A} (G_2\cup G_3^0 \cup \{\021{I},+\}\cup C(I)^0 \cup \{x-s_i\mid i\in \{1,\dots ,k\}\} ),
 \]
 and so we have found a finite set of generators of $\Comp (\ab{A})$. \qed

\section{The proofs of the theorems}\label{sec:mainproof}
\emph{Proof of Theorem \ref{thm:oppositewithoutM2}:} We proceed by
induction on $n$. For $n=1$, we assume that $\IntI [{\mathbf
0},S_1]=\mathrm{Id}(\mathbf{A})$ is a two-element lattice or that
$\mathrm{Id}(\mathbf{A})$ does not split. If
$|\mathrm{Id}(\mathbf{A})|=2$, then $\ab{A}$ is simple and hence
every finitary operation on $\mathbf{A}$ is compatible. Since every
operation is a composition of binary operations (cf. \cite[Theorem
3.1.6]{KP:PCIA}), the binary compatible functions then generate all
(compatible) functions.
 If
$|\mathrm{Id}(\mathbf{A})|>2$ and $\mathrm{Id}(\mathbf{A})$ does not
split, then we can apply
 Lemma~\ref{Malcev21} because each expanded group has a Mal'cev term and obtain that the clone $\mathrm{Comp}(\mathbf{A})$ is
finitely generated.

 For the induction step, let $n \ge 2$.
 Since $\IntI [{\mathbf 0},S_1]\cong \mathrm{Id}(\ABar|_{S_1})$ and $\IntI [S_1,{\mathbf 1}]\cong \mathrm{Id}(\A/S_1)$,
the induction hypothesis yields that $\mathrm{Comp}(\ABar|_{S_1})$
and $\mathrm{Comp}(\mathbf{A}/S_1)$ are finitely generated.
Now Theorem~\ref{thm:cut} implies that $\Comp (\ab{A})$ is finitely generated. \qed

\emph{Proof of Theorem \ref{teorema}:}
$(1)\Rightarrow (2)$: Assume that $\mathrm{Comp}(\A)$ is finitely generated.
We prove the statement by induction on the number $n$ of nonzero elements that cut the lattice.

In the case $n=1$, the interval $\IntI [{\mathbf 0},S_1]$ is the
whole lattice $\mathrm{Id}(\mathbf{A})$, and thus by Lemma
\ref{lema1}, $\mathrm{Id} (\A)$ is simple. Now
Theorem~\ref{thm:malcev} yields that $\mathrm{Id}(\mathbf{A}) =
\IntI [{\mathbf 0}, S_1]$ is of the required form.

 For the induction step, let $n \ge 2$.
 By Lemma \ref{Arestrik}, $\mathrm{Comp}(\ABar|_{S_1})$ is finitely generated,
and we can apply the induction hypothesis on the expanded group $\ABar|_{S_1}$.
 We notice  $\mathrm{Id}(\ABar|_{S_1})\cong \IntI [{\mathbf 0},S_1]$.
Thus, in the lattice $\mathrm{Id}(\ABar|_{S_1})\cong \IntI [{\mathbf
0},S_1]$ we have either ${\mathbf 0} \prec S_1$ or there is no
splitting pair.
 Now we consider $\mathbf{A}/S_1$. By Lemma~\ref{lem:factor},
 $\Comp (\A / S_1)$ is finitely generated.
Since the ideal lattice of $\mathbf{A}/S_1$ is isomorphic to $\IntI
[S_1,{\mathbf 1}]$,
 the induction hypothesis yields that the ideal lattice of $\ab{A}$ also has the required
 properties above $S_1$.
 (2)$\Rightarrow$(1): Theorem~\ref{thm:oppositewithoutM2}. \qed

We notice that in the implication ~\eqref{it:t1}$\Rightarrow$~\eqref{it:t2} of
Theorem~\ref{teorema},
we cannot omit the assumption that the ideal lattice of $\ab{A}$ is
$\ob{M}_2$-free. To this end, let $\ab{F}$ be a finite field.
The clone of congruence preserving operations of $\ab{A} := \ab{F} \times \ab{F}$
is finitely generated by Proposition~\ref{pro:dp2}. The ideal lattice
of $\ab{A}$ is isomorphic to $\ob{M}_2$ and hence~\eqref{it:t2} does not hold.

{\noindent {\it Proof of Theorem \ref{thm:pgroups}:}}
Since the lattice of normal subgroups of a $p$-group is $\ob{M}_2$-free, the result
is a consequence of Theorem \ref{teorema}. \qed

\section{Acknowledgement}

We thank J.\ Farley, K. Kaarli, and C.\ Pech for fruitful discussions on parts of this paper, and the referee for numerous 
valuable suggestions.

\def\cprime{$'$}
\providecommand{\bysame}{\leavevmode\hbox to3em{\hrulefill}\thinspace}
\providecommand{\MR}{\relax\ifhmode\unskip\space\fi MR }
\providecommand{\MRhref}[2]{%
  \href{http://www.ams.org/mathscinet-getitem?mr=#1}{#2}
}
\providecommand{\href}[2]{#2}

\end{document}